\setlist[enumerate,1]{label={\upshape(\roman*)}}
\newcommand{\Rmnum}[1]
{\expandafter\@slowromancap\romannumeral #1@}
\newtheorem{thm}{Theorem}[section]
\newtheorem{lemma}[thm]{Lemma}
\newtheorem{fact}[thm]{Fact}
\newcounter{foo}[subsection]
\newcounter{fooo}[section]
\newtheorem{step}[foo]{Step}
\newtheorem{stepp}[fooo]{Step}
\newtheorem{cor}[thm]{Corollary}
\newtheorem{example}[thm]{Example}
\newtheorem{defin}[thm]{Definition}
\theoremstyle{definition}
\newtheorem{remark}[thm]{Remark}
\title[Regular sets in Cayley sum graphs on generalized dicyclic groups]{Regular sets in Cayley sum graphs on generalized dicyclic groups}
\date{}
\thanks{*Corresponding author}
\author[Peng]{Meiqi Peng}
\address{School of Science\\China University of Geosciences\\Beijing 100083\\China}
\email{pengmq@email.cugb.edu.cn}
\author[Yang]{Yuefeng Yang*}
\address{School of Science\\China University of Geosciences\\Beijing 100083\\China}
\email{yangyf@cugb.edu.cn}
\begin{document}
	\begin{abstract}
		For a graph $\Gamma=(V(\Gamma),E(\Gamma))$, a subset $C$ of $V(\Gamma)$ is called an $(\alpha,\beta)$-regular set in $\Gamma$, if every vertex of $C$ is adjacent to exactly $\alpha$ vertices of $C$ and every vertex of $V(\Gamma)\setminus C$ is adjacent to exactly $\beta$ vertices of $C$. In particular, if $C$ is an $(\alpha,\beta)$-regular set in some Cayley sum graph of a finite group $G$ with connection set $S$, then $C$ is called an $(\alpha,\beta)$-regular set of $G$. In this paper, we consider a generalized dicyclic group $G$ and for each subgroup $H$ of $G$, by giving an appropriate connection set $S$, we determine each possibility for $(\alpha,\beta)$ such that $H$ is an $(\alpha,\beta)$-regular set of $G$.
	\end{abstract}
	
	\keywords{regular set; Cayley sum graph; generalized dicyclic group}
	
	
	\maketitle
	\section{Introduction}
	
	A {\em graph} $\Gamma$ is a pair $(V(\Gamma),E(\Gamma))$ of vertex set $V(\Gamma)$ and edge set $E(\Gamma)$, where $E(\Gamma)$ is a subset of the set of 2-element subsets of $V(\Gamma)$. Throughout this paper, all groups are assumed to be finite, and all graphs are finite, simple and undirected.
	
	Let $\Gamma=(V(\Gamma),E(\Gamma))$ be a graph. For two different vertices $x,y\in V(\Gamma)$, they are said to be {\em adjacent} if the set $\{x,y\}\in E(\Gamma)$. For non-negative integers $\alpha$ and $\beta$, a subset $C$ of $V(\Gamma)$ is called an {\em $(\alpha,\beta)$-regular set} \cite{DMC04} in $\Gamma$, if every vertex of $C$ is adjacent to exactly $\alpha$ vertices of $C$ and every vertex of $ V(\Gamma)\setminus C$ is adjacent to exactly $\beta$ vertices of $C$. In particular, a $(0,1)$-regular set in $\Gamma$ is called a {\em perfect code}, an {\em efficient dominating set} \cite{DIJ03,YPD14,DYP17,TWH98,MK12,YSK22} or an {\em independent perfect dominating set} \cite{TWH98,JL01,XM21}; a $(1,1)$-regular set in $\Gamma$ is called a {\em total perfect code}, or an {\em efficient open dominating set} \cite{CTT12,GH03,TWH98}. 
	
	Let $G$ be a group with the identity element $e$ and $S$ an {\em inverse-closed} subset of $G\setminus\{e\}$ (that is, $S^{-1}:=\{s^{-1}:s\in S\}=S$). The {\em Cayley graph}
	${\rm Cay}(G,S)$ on $G$ with {\em connection set} $S$ is the graph with vertex set $G$ and edge set $\{\{g, gs\}:g\in G,s\in S\}$.
	
	 Huang, Xia and Zhou \cite{HH18} first introduced the definition of a subgroup (total) perfect code of a group $G$ by using Cayley graphs on $G$. A subset of a group $G$ is called a \emph{\textup{(}total\textup{)} perfect code} of $G$ if it is a (total) perfect code in some Cayley graph of $G$. A (total) perfect code of $G$ is called a \emph{subgroup \textup{(}total\textup{)} perfect code} of $G$ if it is also a subgroup of $G$. Also in \cite{HH18}, the authors gave a necessary and sufficient condition for a normal subgroup of a group $G$ to be a subgroup (total) perfect code of $G$. Chen, Wang and Xia generalized this result on perfect codes to arbitrary subgroups \cite{JC20}. Ma, Walls, Wang and Zhou \cite{MXL03} proved that a group $G$ admits every subgroup as a perfect code if and only if $G$ has no elements of order 4. In \cite{JZ211,JZ212}, Zhang and Zhou gave a few necessary and sufficient conditions for a subgroup of a group to be a subgroup perfect code, and several results on subgroup perfect codes of metabelian groups, generalized dihedral groups, nilpotent groups and 2-groups. Further results on subgroup perfect codes in Cayley graphs, see for examples \cite{AB21,XCB25,KY22,YW23,JYZ23}.

	In \cite{WY22,WY23}, Wang, Xia and Zhou first introduced the concept of $(\alpha,\beta)$-regular set of a group $G$ by using Cayley graphs on $G$. A subset (resp. subgroup) of a group $G$ is called an {\em $(\alpha,\beta)$-regular set} (resp. a subgroup {\em $(\alpha,\beta)$-regular set}) of $G$ if it is an $(\alpha,\beta)$-regular set in some Cayley graph of $G$. In particular, a subgroup $(0, 1)$-regular set (resp. $(1,1)$-regular set) of $G$ is a subgroup perfect code (resp. subgroup total perfect code) of $G$. In \cite{XMW24}, Wang, Xu and Zhou determined when a non-trivial proper normal subgroup of a group is a $(0,\beta)$-regular set of the group and determined all subgroup $(0,\beta)$-regular sets of dihedral groups and dicyclic groups. In \cite{YK25}, Khaefi, Akhlaghi and Khosravi proved that if $H$ is a subgroup of $G$, then $H$ is an $(\alpha,\beta)$-regular set of $G$, for each $0\leq\alpha\leq|H|-1$ such that $\gcd(2,|H|-1)$ divides $\alpha$, and for each $0\leq\beta\leq|H|$ such that $\beta$ is even. Also, they proved that a subgroup $H$ of a group $G$ is a perfect code of $G$ if and only if it is an $(\alpha,\beta)$-regular set of $G$, for each $0\leq\alpha\leq|H|-1$ such that $\gcd(2,|H|-1)$ divides $\alpha$, and for each $0\leq\beta\leq|H|$. In addition, they showed that if $H$ is a subgroup of $G$, then $H$ is a perfect code of $G$ if and only if it is an $(\alpha,\beta)$-regular set of $G$ for each $0\leq\alpha\leq|H|-1$ such that $\gcd(2,|H|-1)$ divides $\alpha$, and for each $0\leq\beta\leq|H|$ such that $\beta$ is odd.

	The Cayley sum graph is first defined for abelian groups \cite{FRKC98} and then it is generalized to any arbitrary group in \cite{MA16}. Let $G$ be a group. An element $x\in G$ is called {\em square} if $x=y^2$ for some element $y\in G$. A subset $S$ of $G$ is called {\em square-free} if every element of $S$ is not square. We say that a subset $S$ of $G$ is {\em normal} if $g^{-1}Sg=\{g^{-1}sg:s\in S\}=S$ for every element $g\in G$. Let $S$ be a normal square-free subset of $G$. The {\em Cayley sum graph} of $G$ with the {\em connection set} $S$, denoted by ${\rm CayS}(G,S)$, is the graph with vertex set $G$ and two vertices $x$ and $y$ are adjacent whenever $xy\in S$. Note that the normality of $S$ implies that $xy\in S$ if and only if $yx\in S$. The square-free condition of $S$ ensures that ${\rm CayS}(G,S)$ has no loops, so that ${\rm CayS}(G,S)$ is a simple graph. Clearly, ${\rm CayS}(G,S)$ is an $|S|$-regular graph.
	
	In \cite{MXL01}, Ma, Feng and Wang studied the perfect codes in Cayley sum graphs, and defined a subgroup perfect code of a group by using Cayley sum graphs instead of Cayley graphs. More precisely, a subgroup of a group $G$ is said to be a {\em subgroup perfect code} of $G$ if the subgroup is a perfect code in some Cayley sum graph of $G$. Also in \cite{MXL01}, the authors reduced the problem of determining when a given subgroup of an abelian group is a perfect code to the case of abelian 2-groups, and classified the abelian groups whose all non-trivial subgroups are perfect codes. Ma, Wang and the second author \cite{MXL02} characterized all subgroup perfect codes of abelian groups.

	The total perfect codes in Cayley sum graphs have been studied, and a total perfect code of a group is also defined by using Cayley sum graphs instead of Cayley graphs.	A subset (resp. subgroup) $C$ of a group $G$ is called a {\em total perfect code} (resp. {\em subgroup total perfect code}) of $G$ if it is a total perfect code in some Cayley sum graph of $G$. Zhang \cite{ZJY24} gave some necessary conditions of a subgroup of a given group being a (total) perfect code in a Cayley sum graph of the group, and classified the Cayley sum graphs of some families of groups which admit a subgroup as a (total) perfect code. Wang, Wei, Xu and Zhou \cite{WXM24} gave two necessary and sufficient conditions for a subgroup of a group $G$ to be a total perfect code of $G$, and obtained two necessary and sufficient conditions for a subgroup of an abelian group $G$ to be a total perfect code of $G$. They also gave a classification of subgroup total perfect codes of a cyclic group, a dihedral group and a dicyclic group.
	
	Replacing Cayley sum graphs with Cayley graphs in the concept of a subgroup $(\alpha,\beta)$-regular sets of a group, the authors in \cite{XW24} obtained the concept of regular sets in a group. More precisely, a subset (resp. subgroup) $C$ of a group $G$ is called an {\em $(\alpha,\beta)$-regular set} (resp. a {\em subgroup $(\alpha,\beta)$-regular set}) of $G$ if it is an $(\alpha,\beta)$-regular set in some Cayley sum graph of $G$. In particular, a subgroup $(0, 1)$-regular set (resp. $(1,1)$-regular set) of $G$ is a subgroup perfect code (resp. subgroup total perfect code) of $G$. Also in \cite{XW24}, the authors obtained some necessary and sufficient conditions for a subgroup of a group $G$ to be a $(0,\beta)$-regular set of $G$, and characterized all possible subgroup $(0,\beta)$-regular sets of a cyclic group, a dihedral group and a dicyclic group. Seiedali, Khosravi and Akhlaghi \cite{FS24} gave a necessary and sufficient condition for a subgroup of an abelian group to be a subgroup $(\alpha,\beta)$-regular set. For each subgroup $H$ of a dihedral group $G$, by giving an appropriate connection set $S$, they determined each possibility for $(\alpha,\beta)$ such that $H$ is an $(\alpha,\beta)$-regular set of $G$.
	
	In this paper, we study regular sets in Cayley sum graphs on a generalized dicyclic group. For each subgroup $H$ of a generalized dicyclic group $G$, by giving an appropriate connection set $S$, we determine each possibility for $(\alpha,\beta)$ such that $H$ is an $(\alpha,\beta)$-regular set of $G$. In order to state our main results, we need additional notations and terminologies.
	
	Let $G$ be a group with the identity element $e$. For $g\in G$, let $o(g)$ denote the {\em order} of $g$, that is, the smallest positive integer $m$ such that $g^m=e$. An element $a$ is called an {\em involution} if $o(a)=2$. For a subgroup $H$ of $G$ and an element $a\in G$, $Ha=\{ha:h\in H\}$ (resp. $aH=\{ah:h\in H\}$) is called a {\em right coset} (resp. {\em left coset}) of $H$ in $G$. The {\em index} of a subgroup $H$ in $G$, denoted by $|G:H|$, is defined as the number of distinct right (or left) cosets of $H$ in $G$.
	
	In the remainder of this paper, $A$ always denotes an abelian group of even order with an involution $b^2$, and $G$ denotes the {\em generalized dicyclic group} generated by $A$ and $b$ where $bab^{-1}=a^{-1}$ for all $a\in A$, that is, $G=\langle A,b:b^2\in A,b^4=e,bab^{-1}=a^{-1}, a\in A\rangle$. Since $A$ is an abelian group, from the fundamental theorem of finitely generated abelian groups, we may assume
	\begin{align}
		A=\langle a_{1}\rangle \times \cdots\times \langle a_{\lambda}\rangle \times \langle a_{\lambda+1} \rangle \times \cdots \times\langle a_{\lambda+\mu} \rangle,\nonumber
	\end{align}
	with $o(a_i)=p_i^{e_i}$ for all $1\leq i\leq \lambda+\mu$, where $p_{i}$ is a prime, $e_{1}\leq \cdots\leq e_{\lambda}$, and $p_{j}=2$ if and only if $1\leq j\leq \lambda$. Let $a_0=b^2$ and $k$ be the maximum nonnegative integer such that $a_k$ is an involution. Let $\varphi_i$ be the projection from $A$ to $\langle a_i\rangle$ for $1\leq i\leq \lambda+\mu$. Denote
	\begin{align}\label{A'B}
		B=\langle a_{1}^{2}\rangle\times\cdots\times \langle a_{\lambda}^{2}\rangle \times \langle a_{\lambda+1}\rangle\times \cdots\times \langle a_{\lambda+\mu}\rangle.
	\end{align}
	
	Throughout this paper, we always assume that $H$ is a subgroup of $A$ and $z\in A$.
	Denote
	\begin{align}
		L_H&=\{a_i:\varphi_i(H)=\langle a_i\rangle,~1\leq i\leq\lambda\},~r_H=|\{i:\varphi_i(H)=\langle a_i\rangle,~1\leq i\leq k\}|,\nonumber\\
		m_H&=|\{i:\varphi_i(H)\neq\{e\},~1\leq i\leq\lambda\}|.\nonumber
	\end{align}
	If no confusion occurs, we write $t$ instead of $t_{H}$ for all symbols $t\in\{L,r,m\}$.
	
	 Recall that $A$ is an abelian group, $G=\langle A,b:b^2\in A,b^4=e,bab^{-1}=a^{-1}, a\in A\rangle$ and $H\leqslant A$. All subgroups of $G$ are of the form $H$ or $\langle H,zb\rangle$ with $|\langle H,zb\rangle:H|=2$, see Lemma \ref{lem1}. In the following theorem, by giving an appropriate connection set $S$, we determine each possibility for $(\alpha,\beta)$ such that $H$ is an $(\alpha,\beta)$-regular set of $G$.
	\begin{thm}\label{thm1}
		 The subgroup $H$ is an $(\alpha,\beta)$-regular set of $G$ for $(\alpha,\beta)\neq(0,0)$, if and only if $0\leq\alpha\leq (2^{|L|}-1)|H|/2^{|L|}-\epsilon$, $\beta=t|H|/2^{|L|}$ with $0\leq t\leq2^{|L|}-\varepsilon$, and one of the following occurs:
		\begin{enumerate}
			\item\label{thm1-1} $\epsilon=1$ and $\alpha$ is even, when $m=|L|=1$ and $b^2\in H\setminus B$;
			
			\item\label{thm1-2} $\epsilon=1$, when $m\geq|L|\geq1$, $m\neq1$ and $b^2\in H\setminus B$;
			
			\item\label{thm1-3} $\epsilon=0$ and $\alpha$ is even, when $r=0$ and $b^2\notin H\setminus B$;
			
			\item\label{thm1-4} $\epsilon=0$, when $r>0$ and $b^2\notin H\setminus B$.
		\end{enumerate}
		Here, $\varepsilon=0$ if $B\cup\{b^2\}\subseteq H$, $\varepsilon=2$ if $B\nleqslant H$, $b^2\notin H\cup B$ and $Hb^2\cap B\neq\emptyset$, and $\varepsilon=1$, otherwise.
	\end{thm}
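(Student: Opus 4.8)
The plan is to recast the regular-set condition as a counting problem about how a normal square-free connection set meets the cosets of $H$ in $G$, and then to solve the resulting realizability question by treating $A$ and $Ab$ separately. A one-line computation shows that in ${\rm CayS}(G,S)$ every $h\in H$ has exactly $|S\cap H|$ neighbours in $H$ (since $h'\mapsto hh'$ permutes $H$ and sends $h'$ to a neighbour of $h$ precisely when $hh'\in S$), and every $g\in G\setminus H$ has exactly $|S\cap gH|$ neighbours in $H$. Thus $H$ is an $(\alpha,\beta)$-regular set of $G$ exactly when some normal square-free $S\subseteq G$ satisfies $|S\cap H|=\alpha$ and $|S\cap C|=\beta$ for every coset $C\neq H$ of $H$ in $G$. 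Now $A$ is a normal subgroup of index $2$, $G=A\cup Ab$, and a routine computation of conjugacy classes shows these are the pairs $\{a,a^{-1}\}$ inside $A$ and the cosets $aBb$ inside $Ab$, where $B=A^{2}$; moreover the squares of $G$ are exactly the elements of $B\cup\{b^{2}\}$, all of which lie in $A$. Hence a normal square-free $S$ is precisely a set $S=S_{1}\cup S_{2}$ with $S_{1}\subseteq A\setminus(B\cup\{b^{2}\})$ inverse-closed (and otherwise free) and $S_{2}=Tb$ for an arbitrary union $T$ of cosets of $B$ in $A$ (there is no square-free restriction on $Ab$). Finally, after choosing the cyclic decomposition of $A$ compatibly with $H$, so that $H=\prod_{i}(H\cap\langle a_{i}\rangle)$ — possible by the structure theory of finite abelian groups and harmless for the statement — one obtains $|HB:B|=2^{|L|}$, hence $|H\cap B|=|H|/2^{|L|}$.

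The factor $S_{2}=Tb$ controls $\beta$ on the cosets of $H$ inside $Ab$, which are the $xHb$ for $x\in A$. Here $|Tb\cap xHb|=|T\cap xH|=|H\cap B|\cdot|\overline{T}\cap x\overline{HB}|$, where $\overline{T}=T/B$ and $\overline{HB}=HB/B$ inside $A/B$. So $\beta$ is constant on these cosets if and only if $\overline{T}$ meets every coset of $HB/B$ in $A/B$ in the same number $t$ of elements; then $\beta=t\,|H|/2^{|L|}$ with $0\le t\le 2^{|L|}$, and every such $t$ is realizable. This already forces $\beta=t\,|H|/2^{|L|}$ with $0\le t\le 2^{|L|}$; the sharper bound $t\le 2^{|L|}-\varepsilon$ and all constraints on $\alpha$ will come from realizing this same $\beta$ with the $A$-factor $S_{1}$.

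The core of the proof is the analysis of $S_{1}$: one must determine for which $(\alpha,\beta)$ there is an inverse-closed $S_{1}\subseteq A\setminus(B\cup\{b^{2}\})$ with $|S_{1}\cap H|=\alpha$ and $|S_{1}\cap xH|=\beta$ for all $x\in A\setminus H$. Examining the cosets of $H$ in $A$ one at a time, the forbidden part of $xH$ is $xH\cap(B\cup\{b^{2}\})$; since $xH\cap B$ is empty or a coset of $H\cap B$ (nonempty exactly when $x\in HB$) and $b^{2}$ lies in only one coset, the number of usable elements of a coset is one of $|H|$, $|H|-|H\cap B|$, $|H|-1$, $|H|-|H\cap B|-1$; moreover the usable part of each coset is itself inverse-closed, while the inverse map pairs $xH$ with $x^{-1}H$ and fixes $b^{2}H$, so inverse-closure imposes nothing extra. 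The tightest coset $\neq H$ is one contained in $HB$, worst precisely when it equals $b^{2}H$ with $b^{2}\in HB\setminus(H\cup B)$ — exactly the trichotomy defining $\varepsilon$ — which yields $t\le 2^{|L|}-\varepsilon$; the usable part of $H$ itself has size $(2^{|L|}-1)|H|/2^{|L|}-\epsilon$ with $\epsilon=1$ precisely when $b^{2}\in H\setminus B$, which bounds $\alpha$; and $\alpha$ attains every value up to this bound except that it must be even exactly when $H$ has no involution outside $B\cup\{b^{2}\}$. One then unwinds this last condition: the involutions of the ``box'' $H$ lying outside $B$ are those whose support meets the $r$ distinguished involutory coordinates on which $H$ is full, and $b^{2}$, when it lies in $H\setminus B$, already consumes one such coordinate; this identifies ``$\alpha$ must be even'' with ``$m=|L|=1$'' when $b^{2}\in H\setminus B$ (case (i); case (ii) being the complementary ``$m\ge|L|\ge1$, $m\ne1$'') and with ``$r=0$'' when $b^{2}\notin H\setminus B$ (case (iii); case (iv) being ``$r>0$'').

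Assembling the two parts: the forward direction is immediate from the counts above, so it remains to construct, for each $(\alpha,\beta)$ in the stated region, a witnessing $S$. One takes $T$ to consist of $t$ elements of each coset of $HB/B$ in $A/B$, and builds $S_{1}$ by filling the cosets of $H$ in $A$ to the prescribed sizes — inserting an available involution exactly when a prescribed size is odd, and respecting the pairing $xH\leftrightarrow x^{-1}H$ — and then checks that $S=S_{1}\cup Tb$ is normal and square-free and exhibits $H$ as an $(\alpha,\beta)$-regular set of $G$. The main obstacle is the third paragraph: the coset-by-coset bookkeeping of where $B$, the $2$-torsion of $H$, and the distinguished square $b^{2}$ sit among the cosets of $H$ in $A$ is elementary but intricate, and it is precisely this bookkeeping that produces the parameters $\varepsilon$, $\epsilon$, $r$, $m$ and the four-way split appearing in the statement.
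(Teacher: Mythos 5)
Your overall strategy is essentially the paper's: reduce to the counts $|S\cap H|=\alpha$ and $|S\cap C|=\beta$ over cosets $C\neq H$ (the paper's Lemma 2.1), use the conjugacy classes $\{a,a^{-1}\}$ and $Bab$ together with ${\rm Sq}(G)=B\cup\{b^2\}$ to force $\beta=t|H|/2^{|L|}$ from the $Ab$-cosets (Lemma 3.1), and then do coset-by-coset bookkeeping inside $A$; your derivation of the bound $t\le 2^{|L|}-\varepsilon$ from the worst coset and of the parity condition on $\alpha$ from the involutions of $H$ outside $B\cup\{b^2\}$ matches Lemmas 3.4 and 4.1. However, there is a genuine gap in your third paragraph: the assertion that ``the inverse map pairs $xH$ with $x^{-1}H$ and fixes $b^2H$, so inverse-closure imposes nothing extra'' is false. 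Every coset $Hx\subseteq A$ with $x^2\in H$ and $x\notin H$ is fixed by inversion, and if its usable part $Hx\setminus(B\cup\{b^2\})$ contains no involution, an inverse-closed $S_1$ must meet $Hx$ in an \emph{even} number of elements --- exactly the phenomenon you do invoke for $H$ itself to constrain $\alpha$. Consequently your necessity argument does not rule out an additional parity condition on $\beta$, and your construction (``inserting an available involution exactly when a prescribed size is odd'') fails on such cosets whenever $\beta$ is odd.

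The theorem survives only because of facts the paper proves and you do not. First, self-inverse cosets of $H$ in $A$ disjoint from the involution set $A'$ (the paper's set $J$) exist precisely when $m>|L|$ (Lemmas 2.13 and 3.2), and in that case $2^m$ divides $|H|$ with $m>|L|$, so $|H|/2^{|L|}$ --- and hence every admissible $\beta=t|H|/2^{|L|}$ --- is automatically even (Lemma 2.8(ii)); this is the consistency check that makes the parity constraint on $J$-cosets vacuous. Second, for a self-inverse coset that does meet $A'$ but whose involutions all lie in $B\cup\{b^2\}$, one must verify that odd values of $\beta$ are already excluded by the $\varepsilon$-bound; the paper handles this by choosing, via Lemma 2.14, a transversal representative that is simultaneously an involution and a square, and then filling each such coset with inverse-closed, square-free blocks $\{a'x:x\in H\cap Ba_{L'}\}$ of size $|H|/2^{|L|}$ (Lemmas 3.3, 3.5, 3.6) rather than with single involutions. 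Without these two verifications the characterization of the achievable $\beta$ on the cosets of $H$ inside $A$ is incomplete in both directions. (A smaller point, which the paper shares: your appeal to ``the structure theory of finite abelian groups'' to arrange $H=\prod_i(H\cap\langle a_i\rangle)$ is not valid in general --- e.g.\ the subgroup generated by $(1,2)$ in $\mathbb{Z}_2\times\mathbb{Z}_8$ admits no adapted cyclic decomposition --- so this reduction would need separate justification, but it is not a defect relative to the paper's own argument.)
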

	
	In the following theorem, by giving an appropriate connection set $S$, we determine each possibility for $(\alpha,\beta)$ such that $\langle H,zb\rangle$ is an $(\alpha,\beta)$-regular set of $G$.
	
	\begin{thm}\label{thm2}
		Without loss of generality we assume $|\langle H,zb\rangle:H|=2$. Then $\langle H,zb\rangle$ is an $(\alpha,\beta)$-regular set of $G$ for $(\alpha,\beta)\neq(0,0)$, if and only if $(\alpha,\beta)=(\eta+t'|H|/2^{|L|},\zeta+t|H|/2^{|L|})$ with $0\leq\eta\leq(2^{|L|}-1)|H|/2^{|L|}-\epsilon$, $0\leq\zeta\leq(2^{|L|}-\varepsilon)|H|/2^{|L|}$ and $0\leq t',t\leq 2^{|L|}$, and one of the following occurs:
		\begin{enumerate}
			\item\label{thm2-1} $\epsilon=1$ and $\beta$ is even, when $m>|L|$ and $b^2\in H\setminus B$;
			
			\item\label{thm2-2} $\epsilon=1$, when $m=|L|$ and $b^2\in H\setminus B$;
			
			\item\label{thm2-3} $\epsilon=0$, $\alpha$ and $\beta$ are even, when $m>|L|$, $r=0$ and $b^2\notin H\setminus B$;
			
			\item\label{thm2-4} $\epsilon=0$ and $\beta$ is even, when $m>|L|$, $r>0$ and $b^2\notin H\setminus B$;
			
			\item\label{thm2-5} $\epsilon=0$, when $m=|L|$ and $b^2\notin H\setminus B$.
			
		\end{enumerate}
		Here, $\varepsilon=0$ if $B\leqslant H$, and $(\varepsilon,t')=(1,t)$ if $B\nleqslant H$.
	\end{thm}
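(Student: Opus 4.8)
The plan is to translate the $(\alpha,\beta)$-regular condition for ${\rm CayS}(G,S)$, i.e.\ $|C\cap x^{-1}S|=\alpha$ for $x\in C$ and $|C\cap x^{-1}S|=\beta$ for $x\notin C$, into a system of counting identities over the cosets of $H$ in $A$, and then to extract the admissible pairs $(\alpha,\beta)$ by a case analysis keyed to the position of $b^{2}$ relative to $H$ and $B$. I would first record the arithmetic of $G$: every element lies in $A$ or in $Ab$; from $bab^{-1}=a^{-1}$ one gets $(cb)^{2}=b^{2}$, conjugation by $A$ fixes $A$ pointwise and translates $Ab$ by $A^{2}$, while conjugation by $Ab$ inverts $A$ and acts on $Ab$ by inversion followed by translation by $A^{2}$. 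Choosing the decomposition of $A$ adapted to $H$ gives $A^{2}=B$ with $B$ as in (\ref{A'B}), so the squares of $G$ are precisely $B\cup\{b^{2}\}$; hence a normal square-free connection set is exactly $S=S_{1}\sqcup Tb$ with $S_{1}\subseteq A\setminus(B\cup\{b^{2}\})$ inverse-closed and $T\subseteq A$ an inverse-closed union of $B$-cosets (there is no square-freeness restriction on $T$, since $Ab$ contains no square). Since one checks that $|\langle H,zb\rangle:H|=2$ is equivalent to $b^{2}\in H$, we have $C:=\langle H,zb\rangle=H\cup Hzb$, and the hypothesis $b^{2}\notin H\setminus B$ simply means $b^{2}\in B$.

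Splitting the neighbours of $x$ in $C$ into those in $H$ and those in $Hzb$ — the former being of the type already analysed for Theorem \ref{thm1} — and writing $\sigma(K)=|K\cap S_{1}|$, $\tau(K)=|K\cap T|$ for $K\in A/H$, a direct computation using $S_{1}=S_{1}^{-1}$ and $T=T^{-1}$ shows the conditions coming from $x\in A$ and from $x\in Ab$ coincide and amount to
\[
\sigma(H)+\tau(zH)=\alpha,\qquad \sigma(D)+\tau(D^{-1}zH)=\beta\ \ (D\in A/H,\ D\ne H).
\]
The realizability facts I would isolate are: (1) every $B$-coset meets an $H$-coset in $0$ or $|H\cap B|=|H|/2^{|L|}$ elements, and each $H$-coset meets exactly $2^{|L|}$ of the $B$-cosets, so each $\tau(K)$ is a multiple of $|H|/2^{|L|}$ in $[0,|H|]$, subject to an inverse-pairing restriction when $K=K^{-1}$; and (2) $\sigma(H)$ is the cardinality of an inverse-closed subset of $H\setminus(B\cup\{b^{2}\})$, so $0\le\sigma(H)\le(2^{|L|}-1)|H|/2^{|L|}-\epsilon$ with $\epsilon=1$ exactly when $b^{2}\notin B$, and its attainable parities are governed by the availability of non-square involutions — the number of involutions of $H$ outside $B$ is $2^{m}-2^{m-r}$, which vanishes iff $r=0$; likewise each $\sigma(D)$ is governed by the involutions of $A$ contained in $D$. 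Two structural observations tie the invariants together: $m>|L|$ is equivalent to $(A/H)[2]\supsetneq A[2]H/H$ (these have orders $2^{\lambda-|L|}$ and $2^{\lambda-m}$ respectively), i.e.\ to the existence of a self-inverse coset $D_{0}\ne H$ containing no involution of $A$; and $m>|L|$ also forces $|H|/2^{|L|}$ to be even.

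For necessity I would start from an arbitrary valid $S$, sum the $\beta$-identities over all $D\ne H$ (grouping inverse pairs of cosets) and combine with the $\alpha$-identity; facts (1)–(2) then force $\alpha=\eta+t'|H|/2^{|L|}$ and $\beta=\zeta+t|H|/2^{|L|}$ with $\eta=\sigma(H)$ in the stated range, the $\tau$-values supplying the multiples of $|H|/2^{|L|}$; the bound on $\zeta$ is obtained by choosing, when $B\nleqslant H$, a coset $D\ne H$ meeting $B$ (which exists precisely then), and the coincidence $t'=t$ in that case falls out of the same choice. Finally, when $m>|L|$ the coset $D_{0}$ above has $\sigma(D_{0})$ even and all $\tau$-values even, forcing $\beta$ even (cases \ref{thm2-1}, \ref{thm2-3}, \ref{thm2-4}), and if in addition $r=0$ then $\sigma(H)$ is even, forcing $\alpha$ even (case \ref{thm2-3}). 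For sufficiency I would reverse this: given $(\alpha,\beta)$ in the asserted set, first build $T$ as an inverse-closed union of $B$-cosets realizing a uniform $\tau$-value $t|H|/2^{|L|}$ off $zH$ and $t'|H|/2^{|L|}$ on $zH$ (possible on the stated ranges, with $t'=t$ imposed when $B\nleqslant H$), then enlarge $S_{1}$ coset by coset to supply the residues $\zeta$ off $zH$ and $\eta$ on $H$, using an available non-square involution (furnished by $r>0$, or not needed because $m=|L|$ already forces those residues to the relevant parity) whenever an odd residue is required, and including or excluding $b^{2}$ in accordance with $\epsilon$ and $\varepsilon$; one then verifies $S_{1}=S_{1}^{-1}$, square-freeness, and the two coset identities.

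The hard part will be fact (2) together with the surrounding parity bookkeeping: proving the sharp upper bound with the correct $\epsilon$ and $\varepsilon$ corrections, and showing that the evenness constraints in \ref{thm2-1}, \ref{thm2-3}, \ref{thm2-4} are exactly the obstructions present — no more, no fewer. This hinges on a precise description of which involutions of $H$, and of the cosets $xH$, are squares, on reconciling $k$, $r_{H}$, $m_{H}$, $|L_{H}|$ with the location of $b^{2}$, and on the inverse-pairing subtlety for $T$ on self-inverse cosets; it is also the point at which the case split ($b^{2}\in B$ or not, $B\leqslant H$ or not, $m=|L|$ or $m>|L|$, $r=0$ or $r>0$) becomes genuinely necessary.
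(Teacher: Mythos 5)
Your proposal is correct and follows essentially the same route as the paper: you decompose the connection set into its $A$-part and $Ab$-part (the paper's Lemmas \ref{lem16} and \ref{lem5}), observe that normality forces the $Ab$-part to be a union of $B$-cosets times $b$ contributing uniform multiples of $|H|/2^{|L|}$ to $\alpha$ and $\beta$ with $t'=t$ when $B\nleqslant H$ (Lemma \ref{lem9}), and analyse the $A$-part on $H$ and off $H$ via non-squares and involutions, with the parity obstructions coming from the involution-free self-paired cosets present exactly when $m>|L|$ and from $r=0$ (Lemmas \ref{lem18}, \ref{lem20}, \ref{f6}, \ref{fact1}). The only cosmetic difference is that you phrase everything as a single system of coset-counting identities rather than as a chain of separate lemmas.
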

	
	The paper is organized as follows. In Section 2, we give some preliminary results which will be
	used in subsequent sections. In Section 3, by giving an appropriate connection set $S$, we determine each possibility for $\beta$ such that $H$ or $\langle H,zb\rangle$ is a $(0,\beta)$-regular set of $G$. In Section 4, we prove Theorems \ref{thm1} and \ref{thm2}.
	\section{preliminary}
	
	In this section, we give some basic results which will be used frequently in this paper.
	
	Let ${\rm Sq}(G)$ and ${\rm Nsq}(G)$ be the sets of all square and non-square elements of $G$, respectively. For a subgroup $K$ of $G$, denote $\mathcal{L}(K)=\min\{|{\rm Nsq}(G)\cap Kx|: x\in G\setminus K\}$.
	
	\begin{lemma}\label{lem3}
		Let $K$ be a subgroup of $G$ and $S$ be a normal and square-free subset of $G$. The following hold:
		\begin{enumerate}
			\item\label{lem3-1} The subgroup $K$ is an $(\alpha,\beta)$-regular set in ${\rm CayS}(G,S)$ if and only if $\alpha=|S\cap K|$ and $\beta=|S\cap Kx|$ for each $x\in G\setminus K$;
			
			\item\label{lem3-2} If $K$ is an $(\alpha,\beta)$-regular set of ${\rm CayS}(G,S)$, then $\alpha\leq|{\rm Nsq}(G)\cap K|$ and $\beta\leq \mathcal{L}(K)$.
		\end{enumerate}
	\end{lemma}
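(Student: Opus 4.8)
The plan is to read off the adjacency structure of ${\rm CayS}(G,S)$ directly. First I would record that the neighbourhood of a vertex $g$ is exactly $g^{-1}S=\{g^{-1}s:s\in S\}$: a vertex $y$ is adjacent to $g$ iff $gy\in S$, i.e. iff $y\in g^{-1}S$, and the square-freeness of $S$ rules out $g\in g^{-1}S$ (that would force $g^{2}\in S$), so this really is the neighbourhood in the simple graph ${\rm CayS}(G,S)$. Multiplying on the left by $g$ gives a bijection $K\to gK$ carrying $g^{-1}S\cap K$ onto $S\cap gK$, so for every $g\in G$ the number of vertices of $K$ adjacent to $g$ is $|S\cap gK|$.

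For part \ref{lem3-1} I would split on whether $g\in K$. If $g\in K$ then $gK=K$, so every vertex of $K$ has precisely $|S\cap K|$ neighbours in $K$; thus the condition on vertices of $K$ in the definition of an $(\alpha,\beta)$-regular set holds automatically and pins down $\alpha=|S\cap K|$. If $g\in G\setminus K$ the count is $|S\cap gK|$, and here I would use the normality of $S$: conjugation by $g$ fixes $S$ and sends the left coset $gK$ to the right coset $Kg$, so $|S\cap gK|=|S\cap Kg|$. Hence $K$ is an $(\alpha,\beta)$-regular set exactly when $\alpha=|S\cap K|$ and $|S\cap Kx|=\beta$ for every $x\in G\setminus K$; the converse is the same computation read backwards.

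For part \ref{lem3-2} I would use that a square-free set contains no squares, i.e. $S\subseteq{\rm Nsq}(G)$. Combined with part \ref{lem3-1}, $S\cap K\subseteq{\rm Nsq}(G)\cap K$ yields $\alpha=|S\cap K|\le|{\rm Nsq}(G)\cap K|$, and for each $x\in G\setminus K$, $S\cap Kx\subseteq{\rm Nsq}(G)\cap Kx$ yields $\beta=|S\cap Kx|\le|{\rm Nsq}(G)\cap Kx|$; taking the minimum over $x\in G\setminus K$ gives $\beta\le\mathcal{L}(K)$.

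The whole argument is elementary bookkeeping, so I do not anticipate a real obstacle; the only point that needs attention is the replacement of the left coset $gK$ by the right coset $Kx$ in part \ref{lem3-1}, which is precisely where the normality of $S$ enters (the same hypothesis already noted in the text to make $xy\in S\iff yx\in S$), and one should state that dependence explicitly rather than leave it implicit.
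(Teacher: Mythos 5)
Your proposal is correct and follows essentially the same route as the paper: the paper outsources the necessity of \ref{lem3-1} and all of \ref{lem3-2} to citations of \cite{FS24} and proves the sufficiency of \ref{lem3-1} by the same coset-counting that you carry out, so your write-up is just a self-contained version of the identical computation (neighbourhood of $g$ equals $g^{-1}S$, hence the count is $|S\cap gK|=|S\cap Kg|$ by normality, and $S\subseteq{\rm Nsq}(G)$ gives the bounds). The one point you rightly flag --- that normality is what converts the left coset $gK$ into the right coset $Kx$ appearing in the statement --- is handled correctly.
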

	\begin{proof}
		\ref{lem3-1} The necessity is immediate from \cite[Lemma 2.4 and Corollary 2.5]{FS24}. Next we prove the sufficiency. Since $\alpha=|S\cap K|$ and $\beta=|S\cap Kx|$ for each $x\in G\setminus K$, $S$ contains exactly $\alpha$ elements of $K$ and $\beta$ elements of $Kx$ for each $x\in G\setminus K$. Let $a\in G\setminus K$. Without loss of generality, we may assume $S\cap K=\{k_1,k_2,\ldots,k_{\alpha}\}$ and $S\cap Ka=\{a_1a,a_2a,\ldots,a_{\beta}a\}$ with $a_j\in K$ for $1\leq j\leq \beta$. For each $k\in K$, there are exactly $\alpha$ elements $k^{-1}k_1,k^{-1}k_2,\ldots,k^{-1}k_{\alpha}\in K$ such that $kk^{-1}k_{i}=k_i\in S$ for $1\leq i\leq \alpha$. For $a\in G\setminus K$, there are exactly $\beta$ elements $a_1,a_2,\ldots,a_{\beta}\in K$ such that $a_ja\in S$ for $1\leq j\leq\beta$. Since $a\in G\setminus K$ was arbitrary, every element of $K$ is adjacent to exactly $\alpha$ elements of $K$ in ${\rm CayS}(G,S)$, every element of $G\setminus K$ is adjacent to exactly $\beta$ elements of $K$ in ${\rm CayS}(G,S)$. Since $S$ is a normal and square-free subset of $G$, $K$ is an $(\alpha,\beta)$-regular set in ${\rm CayS}(G,S)$.
		
		\ref{lem3-2} This is immediate from \cite[Theorem 2.9]{FS24}.
	\end{proof}

	
	
	
	The following result determines all subgroups of a generalized dicyclic group.
	
	\begin{lemma}\label{lem1}
		All subgroups of $G$ are as follows:
		\begin{enumerate}
			\item\label{lem1-1} $A_1$, where $A_1\leqslant A$;
			
			\item\label{lem1-2} $\langle A_1,xb\rangle$, where $x\in A$ and $|\langle A_1,xb\rangle:A_1|=2$.
		\end{enumerate}
	\end{lemma}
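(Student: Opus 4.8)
The plan is to exploit the fact that $A$ is a normal subgroup of index $2$ in $G$. Indeed, since $b^2\in A$ we have $G=A\cup Ab$, and the defining relation $bab^{-1}=a^{-1}$ shows $A\trianglelefteq G$. So I would take an arbitrary subgroup $K\leqslant G$ and split into two cases according to whether or not $K\subseteq A$.

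If $K\subseteq A$, then $K$ is a subgroup of $A$ and we are in case \ref{lem1-1} with $A_1=K$. If $K\not\subseteq A$, set $A_1=K\cap A$. Composing the inclusion $K\hookrightarrow G$ with the quotient map $G\twoheadrightarrow G/A$ gives a homomorphism with kernel $A_1$, and it is surjective because $K$ meets the non-trivial coset $Ab$; hence $K/A_1\cong G/A$ has order $2$, so $A_1\leqslant A$ and $|K:A_1|=2$. Next I would fix any element of $K\setminus A_1$: since $A_1=K\cap A$ and $K\subseteq A\cup Ab$, such an element lies in $Ab$, say it equals $xb$ with $x\in A$. Then I claim $K=\langle A_1,xb\rangle$. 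The inclusion $\supseteq$ is immediate. For $\subseteq$, write $K=A_1\cup(K\setminus A_1)$; every $yb\in K\setminus A_1$ (with $y\in A$) satisfies $yb\cdot(xb)^{-1}=yx^{-1}\in A\cap K=A_1$, so $yb=(yx^{-1})(xb)\in\langle A_1,xb\rangle$. This yields $K=\langle A_1,xb\rangle$ with $|\langle A_1,xb\rangle:A_1|=|K:A_1|=2$, which is case \ref{lem1-2}.

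For the converse direction, any subgroup $A_1$ of $A$, and any set $\langle A_1,xb\rangle$ with $x\in A$, is obviously a subgroup of $G$, so the two families in \ref{lem1-1} and \ref{lem1-2} together give exactly the subgroups of $G$. I do not anticipate a genuine obstacle here: this is the routine description of the subgroups of an index-$2$ extension. The one subtle-looking point is that the constraint $|\langle A_1,xb\rangle:A_1|=2$ appearing in \ref{lem1-2} is not vacuous — from $(xb)^2=x(bxb^{-1})b^2=b^2$ it in fact forces $b^2\in A_1$ — but this observation is only a consistency check and is not needed to establish the classification itself.
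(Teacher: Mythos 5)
Your proposal is correct and follows essentially the same route as the paper: split on whether the subgroup meets $Ab$, set $A_1=K\cap A$, pick $xb\in K\cap Ab$, and show $K=\langle A_1,xb\rangle$ with index $2$ (the paper gets the index from $b^2=(xb)^2\in A_1$ directly, while you read it off the surjection onto $G/A$, a purely cosmetic difference).
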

	\begin{proof}
		Let $M$ be a subgroup of $G$. Since $|G:A|=2$, we have $G=A\cup Ab$, which implies $M=A_1\cup(M\cap Ab)$, where $A_1=M\cap A$. If $M\cap Ab=\emptyset$, then $A_1=M\leq A$.
		
		We only need to consider the case $M\cap Ab \neq \emptyset$. Let $xb\in M\cap Ab$ with $x\in A$. It follows that $b^2=(xb)^2\in M\cap A=A_1$, and so $|\langle A_1,xb\rangle:A_1|=2$.
		
		Pick $w\in M$. If $w\in A$, then $w\in A_1$, and so $w\in \langle A_1,xb\rangle$. Now suppose $w\in Ab$. Then there exists $a\in A$ such that $w=ab$. It follows that $ab=ax^{-1}xb$. Since $ab,xb,b^2\in M$, one gets $ax^{-1}=(ab)(xb)b^2\in M\cap A=A_1$, which implies $w=ab\in\langle A_{1},xb\rangle$. Since $w$ was arbitrary, we get $M\leqslant \langle A_{1},xb\rangle$.

		Let $a'(xb)^i\in \langle A_1,xb\rangle$ with $a'\in A_1$ and $i\in\{0,1,2,3\}$. If $i\in\{0,2\}$, then $a'(xb)^i\in A_1\subseteq M$ since $b^2\in A_1$. If $i=\{1,3\}$, then $a'(xb)^i=a'xb$ or $a'b^2xb$, which implies $a'(xb)^i\in M$ since $a',xb,b^2\in M$. Therefore, $\langle A_{1},xb\rangle\leqslant M$, and so $M=\langle A_{1},xb\rangle$.
	\end{proof}
	
	
	Recall that $H$ is a subgroup of $A$.
	\begin{lemma}\label{lem4}
		 The subgroup $H$ is an $(\alpha,\beta)$-regular set of $G$ if and only if $H$ is an $(\alpha,0)$-regular set of $G$ and a $(0,\beta)$-regular set of $G$.
	\end{lemma}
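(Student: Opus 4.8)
The plan is to reduce everything to the counting criterion of Lemma~\ref{lem3}\ref{lem3-1}, using the fact that $H$ is normal in $G$: since $H\leqslant A$ with $A$ abelian and conjugation by $b$ inverts every element of $A$, one has $gHg^{-1}=H$ for all $g\in G$. Consequently, for any subset $T$ of $G$ the pieces $T\cap H$ and $T\setminus H$ are again normal whenever $T$ is, because $g^{-1}(T\cap H)g=(g^{-1}Tg)\cap(g^{-1}Hg)$ and likewise for the complement; and any subset of a square-free set is square-free. This ``splitting along $H$'' is the only real device needed. I will first record this normality observation, then treat the two implications separately, choosing $S_1=S\cap H$ and $S_2=S\setminus H$ in one direction and $S=S_1\cup S_2$ in the other.

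For the forward implication I will assume $H$ is an $(\alpha,\beta)$-regular set in some ${\rm CayS}(G,S)$, so by Lemma~\ref{lem3}\ref{lem3-1} that $|S\cap H|=\alpha$ and $|S\cap Hx|=\beta$ for every $x\in G\setminus H$. Setting $S_1=S\cap H$ and $S_2=S\setminus H$, both are normal and square-free, hence legitimate connection sets. Since $S_1\subseteq H$, I get $|S_1\cap H|=\alpha$ and $|S_1\cap Hx|=0$ for all $x\notin H$; since $S_2\cap H=\emptyset$ and $S_2\cap Hx=S\cap Hx$ for $x\notin H$ (as $Hx$ is disjoint from $H$), I get $|S_2\cap H|=0$ and $|S_2\cap Hx|=\beta$. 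Applying Lemma~\ref{lem3}\ref{lem3-1} again yields that $H$ is an $(\alpha,0)$-regular set in ${\rm CayS}(G,S_1)$ and a $(0,\beta)$-regular set in ${\rm CayS}(G,S_2)$.

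For the converse I will assume $H$ is an $(\alpha,0)$-regular set in ${\rm CayS}(G,S_1)$ and a $(0,\beta)$-regular set in ${\rm CayS}(G,S_2)$. By Lemma~\ref{lem3}\ref{lem3-1}, $|S_1\cap Hx|=0$ for all $x\in G\setminus H$ forces $S_1\subseteq H$, and $|S_2\cap H|=0$ forces $S_2\subseteq G\setminus H$; in particular $S:=S_1\cup S_2$ is a disjoint union, and it is normal (union of normal sets) and square-free (union of square-free sets), hence a valid connection set. Then $S\cap H=S_1$ and $S\cap Hx=S_2\cap Hx$ for every $x\in G\setminus H$, so $|S\cap H|=\alpha$ and $|S\cap Hx|=\beta$, and Lemma~\ref{lem3}\ref{lem3-1} gives that $H$ is an $(\alpha,\beta)$-regular set in ${\rm CayS}(G,S)$.

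I do not expect a genuine obstacle here. The only points requiring a line of care are (a) verifying that $S\cap H$ and $S\setminus H$ (resp.\ $S_1\cup S_2$) remain \emph{normal} subsets of $G$, which is exactly where $H\trianglelefteq G$ enters, and (b) making sure the definition of ``$(\alpha,\beta)$-regular set of $G$'' permits a \emph{different} connection set for the $(\alpha,0)$ and $(0,\beta)$ conditions — it does, since the definition only asks for existence of some Cayley sum graph.
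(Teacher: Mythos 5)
Your argument is correct and complete, and I see no gaps: the splitting $S=(S\cap H)\sqcup(S\setminus H)$ in one direction and $S=S_1\cup S_2$ in the other, justified by the normality of $H$ in $G$ and the counting criterion of Lemma~\ref{lem3}\ref{lem3-1}, is exactly what is needed. The paper gives no details here --- it simply invokes the normality of $H$ together with \cite[Lemma~2.6]{FS24} --- so your proof is in effect a self-contained write-up of the same underlying decomposition rather than a different route.
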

	\begin{proof}
		This is immediate from the fact that $H$ is a normal subgroup of $G$ and \cite[Lemma 2.6]{FS24}.
	\end{proof}
	
Recall $B=\langle a_{1}^{2}\rangle\times\cdots\times \langle a_{\lambda}^{2}\rangle \times \langle a_{\lambda+1}\rangle\times \cdots\times \langle a_{\lambda+\mu}\rangle$. The following fact gives all square elements in $G$.
	\begin{fact}\label{jb2} The set $B\cup\{b^2\}$ consists of all square elements in $G$, that is, ${\rm Sq}(G)=B\cup\{b^2\}$.
	\end{fact}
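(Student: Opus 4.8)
The plan is to split $G$ along the coset decomposition $G = A \cup Ab$ and compute the squares of the two kinds of elements separately, then take the union.

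First I would identify $B$ with the set $\{y^{2}:y\in A\}$ of squares of $A$. Since $A=\langle a_{1}\rangle\times\cdots\times\langle a_{\lambda+\mu}\rangle$ is abelian, squaring is performed coordinatewise, and the set of squares of a cyclic group $\langle a_i\rangle$ is the subgroup $\langle a_i^{2}\rangle$; hence $\{y^{2}:y\in A\}=\langle a_{1}^{2}\rangle\times\cdots\times\langle a_{\lambda+\mu}^{2}\rangle$. For $\lambda+1\leq i\leq\lambda+\mu$ the order $p_i^{e_i}$ of $a_i$ is odd, so $\langle a_i^{2}\rangle=\langle a_i\rangle$, while for $1\leq i\leq\lambda$ we have $p_i=2$ and $\langle a_i^{2}\rangle$ is the index-$2$ subgroup of $\langle a_i\rangle$. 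Comparing with the definition $B=\langle a_{1}^{2}\rangle\times\cdots\times\langle a_{\lambda}^{2}\rangle\times\langle a_{\lambda+1}\rangle\times\cdots\times\langle a_{\lambda+\mu}\rangle$, this yields $\{y^{2}:y\in A\}=B$.

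Next I would compute the squares of the elements in the coset $Ab$. For any $a\in A$, using the defining relation $bab^{-1}=a^{-1}$ together with $b^{2}\in A$, one has
\[
(ab)^{2}=ab\,ab=a\,(bab^{-1})\,b^{2}=a\,a^{-1}\,b^{2}=b^{2}.
\]
Thus every element of $Ab$ squares to $b^{2}$, so $\{y^{2}:y\in Ab\}=\{b^{2}\}$ and in particular $b^{2}\in{\rm Sq}(G)$. Combining the two cases, ${\rm Sq}(G)=\{y^{2}:y\in A\}\cup\{y^{2}:y\in Ab\}=B\cup\{b^{2}\}$, which is the claim.

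I do not anticipate a genuine obstacle here, since the argument is a short direct computation; the only two points that require a moment's attention are that the square subgroup of a $2$-power cyclic factor is proper (this is exactly why $B$ uses $a_i^{2}$ in its first $\lambda$ slots), and that $(ab)^{2}$ is independent of the choice of $a\in A$, which is precisely where the generalized dicyclic relation $bab^{-1}=a^{-1}$ is used.
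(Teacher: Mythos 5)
Your proof is correct, and the paper in fact states this as a Fact without providing any proof, evidently regarding it as routine. Your argument — identifying $B$ as the image of the squaring homomorphism on the abelian group $A$ and checking that $(ab)^2=b^2$ for every $a\in A$ via the relation $bab^{-1}=a^{-1}$ — is exactly the computation the authors implicitly rely on.
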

	
	Denote\begin{align}
		A'=\langle a_{1}^{2^{e_{1}-1}}\rangle \times \cdots \times\langle a_{\lambda}^{2^{e_{\lambda}-1}}\rangle.\label{A'}
	\end{align}
	The following fact determines all involutions of $G$.
	
	\begin{fact}\label{jb3}
		The set $A'\setminus\{e\}$ consists of all involutions of $G$.
	\end{fact}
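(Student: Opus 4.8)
The plan is to use the decomposition $G=A\cup Ab$ and to treat the two cosets separately, the key computation being that every element of the coset $Ab$ squares to $b^2$. Since an involution is by definition an element of order exactly $2$, it suffices to locate all $g\in G$ with $g^2=e$ and $g\neq e$, and to check that each such $g$ indeed has order $2$ (rather than $1$).

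First I would deal with $g\in A$. As $A$ is abelian, the subset $\{x\in A:x^2=e\}$ is a subgroup, and writing $x=a_1^{c_1}\cdots a_{\lambda+\mu}^{c_{\lambda+\mu}}$ the condition $x^2=e$ is equivalent to $o(a_i)\mid 2c_i$ for every $i$. For $\lambda+1\le i\le\lambda+\mu$ the order $p_i^{e_i}$ is odd, forcing $a_i^{c_i}=e$; for $1\le i\le\lambda$ we have $o(a_i)=2^{e_i}$, so $2^{e_i-1}\mid c_i$, i.e.\ $a_i^{c_i}\in\langle a_i^{2^{e_i-1}}\rangle$. Hence $\{x\in A:x^2=e\}=\langle a_1^{2^{e_1-1}}\rangle\times\cdots\times\langle a_\lambda^{2^{e_\lambda-1}}\rangle=A'$, and since each nontrivial factor element $a_i^{2^{e_i-1}}$ really has order $2$, the involutions of $G$ lying in $A$ are exactly the elements of $A'\setminus\{e\}$.

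It remains to show that $Ab$ contains no involution. For any $x\in A$, using $bxb^{-1}=x^{-1}$ and $b^2\in A$ we get
$$(xb)^2=x\,(bxb^{-1})\,b^2=x\,x^{-1}\,b^2=b^2 .$$
By hypothesis $b^2$ is an involution, so $b^2\neq e$ and therefore $(xb)^2\neq e$; thus $xb$ is not an involution (indeed $o(xb)=4$). As every element of $Ab$ has the form $xb$ with $x\in A$, no element of $Ab$ is an involution. Combining the two cases, the set of involutions of $G$ is precisely $A'\setminus\{e\}$. The argument is elementary throughout; the only point requiring care --- and the mild ``obstacle'' --- is verifying that the displayed description of $A'$ coincides with the $2$-torsion subgroup of $A$, which is immediate from the primary decomposition of $A$ and the fact that $p_j=2$ exactly when $1\le j\le\lambda$.
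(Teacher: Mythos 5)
Your proof is correct. The paper states this as a bare Fact with no proof supplied, so there is nothing to compare against; your argument — identifying $A'$ as the $2$-torsion subgroup of $A$ via the primary decomposition (using that $p_j=2$ exactly for $1\le j\le\lambda$), and ruling out involutions in the coset $Ab$ by the computation $(xb)^2=x(bxb^{-1})b^2=b^2\neq e$ — is the canonical one and fills the gap cleanly.
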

	
	Two elements $a$ and $h$ of $G$ are said to be {\em conjugate} if there exists an element $g\in G$ such that $h=g^{-1}ag$. The {\em conjugacy class} of an element $a\in G$ is the set of conjugates of $a\in G$, which is denote by $a^{G}$. The following result classifies the conjugacy classes of $G$.
	
	\begin{lemma}\label{lem2}
		{{\rm (\cite[Lemma 2.4]{YJ25})}} The following hold:
		\begin{enumerate}
			\item\label{lem2-1} $a^{G}=\{a\}$, for each $a\in A'$;
			
			\item\label{lem2-2} $a^{G}=\{a,a^{-1}\}$, for each $a\in A\setminus A'$;
			
			\item\label{lem2-3} $(ab)^{G}=Bab$, for each $a\in A$.
			
		\end{enumerate}
	\end{lemma}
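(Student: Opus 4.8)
The plan is to use the index‑two decomposition $G=A\cup Ab$ and to compute conjugates directly, exploiting the two defining facts about $G$: that $A$ is abelian, and that $bab^{-1}=a^{-1}$ for every $a\in A$ (equivalently $ba=a^{-1}b$ and $bc^{-1}=cb$). Since every element of $G$ has the form $c$ or $cb$ with $c\in A$, it suffices to conjugate a fixed element of $A$, and a fixed element of $Ab$, by an arbitrary $c\in A$ and by an arbitrary $cb\in Ab$.

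For \ref{lem2-1} and \ref{lem2-2}, fix $x\in A$. Conjugation by $c\in A$ fixes $x$ because $A$ is abelian, while $(cb)x(cb)^{-1}=c(bxb^{-1})c^{-1}=cx^{-1}c^{-1}=x^{-1}$. Hence $x^{G}=\{x,x^{-1}\}$, which collapses to $\{x\}$ exactly when $x^{2}=e$. By Fact~\ref{jb3} (and since $e\in A'$), the condition $x^{2}=e$ holds precisely when $x\in A'$, which gives \ref{lem2-1}; otherwise $x\in A\setminus A'$, so $x\neq x^{-1}$ and $|x^{G}|=2$, giving \ref{lem2-2}.

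For \ref{lem2-3}, fix $ab\in Ab$ with $a\in A$. Using $b^{4}=e$, $b^{2}\in A$ and $bc^{-1}=cb$, one gets for $c\in A$
\[
c(ab)c^{-1}=(ca)(bc^{-1})=(ca)(cb)=c^{2}ab,
\]
and for $cb\in Ab$
\[
(cb)(ab)(cb)^{-1}=(cb)(ab)(b^{-1}c^{-1})=cbac^{-1}=c(ba)c^{-1}=ca^{-1}(bc^{-1})=c^{2}a^{-1}b .
\]
Thus $(ab)^{G}=\{c^{2}ab:c\in A\}\cup\{c^{2}a^{-1}b:c\in A\}$, and it remains to identify this with $Bab$. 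I would record two elementary facts about $A$: first, $\{c^{2}:c\in A\}=B$ — on each $2$‑power cyclic factor $\langle a_i\rangle$ ($1\le i\le\lambda$) the squares form $\langle a_i^{2}\rangle$, whereas on each odd‑order factor squaring is an automorphism; secondly, $Ba=Ba^{-1}$, since $a^{-2}=(a^{-1})^{2}$ is a square and hence lies in $B$, so that $a^{-1}\in Ba$. Combining these, $\{c^{2}ab:c\in A\}=Bab$ and $\{c^{2}a^{-1}b:c\in A\}=Ba^{-1}b=Bab$, whence $(ab)^{G}=Bab$.

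None of this is a genuine obstacle: the argument is a short computation, and the only places needing a moment's care are the group‑law manipulations $ba=a^{-1}b$, $bc^{-1}=cb$, $b^{-1}=b^{3}$, together with the two structural observations about $A$ (its squares are exactly $B$, and $Ba=Ba^{-1}$). As a consistency check I would verify that the classes so obtained partition $G$: the singletons and doubletons exhaust $A=A'\cup(A\setminus A')$, and the $|A:B|=2^{\lambda}$ distinct cosets of the form $Bab$ ($a\in A$) partition $Ab$.
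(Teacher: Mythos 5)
Your computation is correct. Note that the paper does not prove this lemma at all: it is quoted from \cite[Lemma 2.4]{YJ25}, so there is no internal argument to compare against. Your direct verification — conjugating a fixed element of $A$ and of $Ab$ by arbitrary elements $c$ and $cb$, and then identifying $\{c^{2}:c\in A\}$ with $B$ (using that squaring is an automorphism on the odd-order factors) and $Ba^{-1}$ with $Ba$ — is a complete, self-contained substitute for the citation, and it is consistent with Facts \ref{jb2} and \ref{jb3} as you observe.
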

	
	
	\begin{cor}\label{cor1}
		Each inverse-closed subset of $A$ is normal.
	\end{cor}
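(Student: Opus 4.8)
The plan is to deduce this directly from the classification of conjugacy classes in Lemma~\ref{lem2}, using the standard fact that a subset $S$ of a group $G$ is normal (in the sense $g^{-1}Sg=S$ for all $g\in G$) if and only if it is a union of $G$-conjugacy classes, i.e. $s^{G}\subseteq S$ for every $s\in S$.

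First I would record that for every $a\in A$ one has $a^{G}\subseteq\{a,a^{-1}\}$. Indeed, if $a\in A'$ then $a^{G}=\{a\}$ by part (i) of Lemma~\ref{lem2}, and since $A'$ is an elementary abelian $2$-group — by Fact~\ref{jb3} every element of $A'\setminus\{e\}$ is an involution — we have $a=a^{-1}$, so $\{a\}=\{a,a^{-1}\}$; if instead $a\in A\setminus A'$ then $a^{G}=\{a,a^{-1}\}$ by part (ii) of Lemma~\ref{lem2}. In both cases the conjugacy class of $a$ lies in $\{a,a^{-1}\}$.

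Then, given an inverse-closed subset $S$ of $A$, for each $s\in S$ the previous observation gives $s^{G}\subseteq\{s,s^{-1}\}\subseteq S$, the last inclusion because $S=S^{-1}$. Hence $\bigcup_{s\in S}s^{G}\subseteq S$, while the reverse inclusion is trivial since $s\in s^{G}$. Thus $S$ is a union of conjugacy classes of $G$, and therefore normal.

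There is essentially no obstacle here; the corollary is a one-line consequence of Lemma~\ref{lem2} once the bookkeeping is set up. The only point needing a word of care is the degenerate cases $a=e$ and $a$ an involution, which are precisely where the formal two-element set $\{a,a^{-1}\}$ collapses to a singleton, and these are exactly the elements landing in $A'$, covered by part (i) of Lemma~\ref{lem2} together with Fact~\ref{jb3}.
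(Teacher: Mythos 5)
Your proof is correct and follows exactly the route the paper intends: the corollary is stated without proof as an immediate consequence of Lemma \ref{lem2}, whose parts (i) and (ii) show $a^{G}\subseteq\{a,a^{-1}\}$ for every $a\in A$, so an inverse-closed subset of $A$ is a union of conjugacy classes and hence normal. (Your appeal to Fact \ref{jb3} for elements of $A'$ is harmless but unnecessary, since $a^{G}=\{a\}\subseteq S$ already suffices there.)
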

	
	
	
	
	

Recall $L=\{a_i:\varphi_i(H)=\langle a_i\rangle,~1\leq i\leq\lambda\}$ and $m=|\{i:\varphi_i(H)\neq\{e\},~1\leq i\leq\lambda\}|$. In the following lemmas, we give some results on the subgroup $H$.

	\begin{lemma}\label{fact2}
		The following hold:
		\begin{enumerate}
			\item\label{fact2-1} $|H\cap B|=|H|/2^{|L|}$, $|H\cap A'|=2^m$ and $|H\cap A'\cap B|=2^{m-r}$;
			
			\item\label{fact2-2} If $m>|L|$, then $|H|/2^{|L|}$ is even.
		\end{enumerate}
	\end{lemma}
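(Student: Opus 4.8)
The plan is to reduce everything to a coordinatewise computation in a suitable cyclic decomposition of $A$. First I would fix a cyclic decomposition $A=\langle a_1\rangle\times\cdots\times\langle a_{\lambda+\mu}\rangle$ adapted to $H$, that is, one for which $H=\prod_{i=1}^{\lambda+\mu}(H\cap\langle a_i\rangle)$; such a decomposition exists for every subgroup of a finite abelian group, and with it $\varphi_i(H)=H\cap\langle a_i\rangle=:H_i$. For $1\le i\le\lambda$ the factor $\langle a_i\rangle$ is cyclic of order $2^{e_i}$, so $H_i=\langle a_i^{2^{j_i}}\rangle$ for a unique $j_i$ with $0\le j_i\le e_i$ (where $j_i=e_i$ means $H_i=\{e\}$). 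The defining data then read off: $a_i\in L\iff j_i=0$; the index $i$ is counted by $m$ (that is, $\varphi_i(H)\ne\{e\}$) $\iff j_i<e_i$; and, since for $1\le i\le\lambda$ we have $i\le k$ iff $e_i=1$, the number $r$ equals $|\{i\le\lambda:\,e_i=1,\ j_i=0\}|$. I would also use the explicit product forms $B=\prod_{i\le\lambda}\langle a_i^2\rangle\times\prod_{i>\lambda}\langle a_i\rangle$ and $A'=\prod_{i\le\lambda}\langle a_i^{2^{e_i-1}}\rangle$ recorded above; both are aligned with the decomposition, so $H\cap B$, $H\cap A'$ and $H\cap A'\cap B$ again split as products indexed by $i$, and the index $[H:H\cap B]$ factors as $\prod_i[H_i:H_i\cap B_i]$.

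Next I would carry out the three cardinality computations of \ref{fact2-1}. For $H\cap B$: for $i\le\lambda$, if $j_i=0$ then $H_i=\langle a_i\rangle$ and $H_i\cap\langle a_i^2\rangle=\langle a_i^2\rangle$ has index $2$ in $H_i$, while if $j_i\ge1$ then $H_i\le\langle a_i^2\rangle$ and the index is $1$; for $i>\lambda$ the relevant factor of $B$ is all of $\langle a_i\rangle\supseteq H_i$. Multiplying, $[H:H\cap B]=2^{|\{i\le\lambda:\,j_i=0\}|}=2^{|L|}$, so $|H\cap B|=|H|/2^{|L|}$. For $H\cap A'$: $\langle a_i^{2^{e_i-1}}\rangle$ is the unique subgroup of order $2$ of $\langle a_i\rangle$, so $H_i\cap\langle a_i^{2^{e_i-1}}\rangle$ has order $2$ exactly when $H_i\ne\{e\}$, that is $j_i<e_i$, and order $1$ otherwise (and is trivial for $i>\lambda$); hence $|H\cap A'|=2^{m}$. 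For $H\cap A'\cap B$: when $j_i<e_i$ the order-$2$ subgroup $\langle a_i^{2^{e_i-1}}\rangle$ of $\langle a_i\rangle$ lies inside $\langle a_i^2\rangle$ precisely when $e_i\ge2$, so the $i$-th factor is nontrivial iff $j_i<e_i$ and $e_i\ge2$. The indices $i\le\lambda$ with $j_i<e_i$ and $e_i=1$ are exactly those with $e_i=1$ and $j_i=0$, of which there are $r$; therefore $|H\cap A'\cap B|=2^{m-r}$.

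Finally, for \ref{fact2-2} I would use the first identity: $|H|/2^{|L|}=|H\cap B|=\prod_{i\le\lambda}|H_i\cap\langle a_i^2\rangle|\cdot\prod_{i>\lambda}|H_i|$, and the second product is odd. If $m>|L|$ then $|\{i\le\lambda:j_i<e_i\}|>|\{i\le\lambda:j_i=0\}|$, so there is an index $i_0\le\lambda$ with $1\le j_{i_0}<e_{i_0}$; for that $i_0$, $H_{i_0}=\langle a_{i_0}^{2^{j_{i_0}}}\rangle\le\langle a_{i_0}^2\rangle$ has order $2^{e_{i_0}-j_{i_0}}\ge2$, so $2\mid|H_{i_0}\cap\langle a_{i_0}^2\rangle|$ and hence $2\mid|H\cap B|=|H|/2^{|L|}$. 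Once the adapted decomposition is set up, everything above is routine bookkeeping; the only place that needs a little care is the split $e_i=1$ versus $e_i\ge2$ in evaluating $|H\cap A'\cap B|$, which is precisely where the invariant $r$, rather than $m$, intervenes, and I expect the only genuinely substantive point to be the initial reduction to a cyclic decomposition of $A$ in which $H$ is the product of its intersections with the factors, so that $|L|$, $m$ and $r$ can be computed factor by factor.
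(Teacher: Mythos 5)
Your proposal is correct and follows essentially the same route as the paper: both write $H$ as a product of subgroups of the cyclic factors (the paper's ``we may assume'', your adapted decomposition) and then compute $H\cap B$, $H\cap A'$ and $H\cap A'\cap B$ factor by factor, with the case split $e_i=1$ versus $e_i\ge 2$ accounting for the exponent $m-r$. The only difference is cosmetic and occurs in part (ii), where the paper deduces evenness from $2^m\mid |H|$ via $|H\cap A'|=2^m$, while you read it off directly from a factor of $H\cap B$ of order at least $2$; both are immediate consequences of part (i).
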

	\begin{proof}
		\ref{fact2-1} Since $m\geq |L|$, we may assume
		\begin{align}
			H=\langle a_{i_1}\rangle\times\cdots\times\langle a_{i_{|L|}}\rangle\times \langle a'_{i_{|L|+1}}\rangle\times\cdots\times\langle a'_{i_m}\rangle\times\langle a'_{\lambda+1}\rangle\times\cdots\times\langle a'_{\lambda+\mu}\rangle,\nonumber
		\end{align}
		where $1\leq i_1<i_2<\cdots<i_m\leq\lambda$, 
		$\{e\}\neq\langle a'_{h}\rangle\subsetneq\langle a_{h}\rangle$ with $i_{|L|+1}\leq h\leq i_m$ and $a'_{j}\in\langle a_{j}\rangle$ with $\lambda+1\leq j\leq\lambda+\mu$. In view of \eqref{A'B}, we get $H\cap B=\langle a_{i_1}^2\rangle\times\cdots\times\langle a_{i_{|L|}}^2\rangle\times \langle a'_{i_{|L|+1}}\rangle\times\cdots\times\langle a'_{i_m}\rangle\times\langle a'_{\lambda+1}\rangle\times\cdots\times\langle a'_{\lambda+\mu}\rangle$. It follows that $|H\cap B|=|H|/2^{|L|}$. Since $r=|\{i:\varphi_i(H)=\langle a_i\rangle,~1\leq i\leq k\}|$, from \eqref{A'}, one gets $H\cap A'=\langle a_{i_{1}}\rangle\times\cdots\times\langle a_{i_{r}}\rangle\times\langle a_{i_{r+1}}^{2^{e_{i_{r+1}}-1}}\rangle\times\cdots\times \langle a_{i_{m}}^{2^{e_{i_{m}}-1}}\rangle$ with $1\leq i_1<i_2<\cdots<i_r\leq k$, which implies $H\cap A'\cap B=\langle a_{i_{r+1}}^{2^{e_{i_{r+1}}-1}}\rangle\times\cdots\times\langle a_{i_{m}}^{2^{e_{i_{m}}-1}}\rangle$ by \eqref{A'B}. Then $|H\cap A'|=2^m$ and $|H\cap A'\cap B|=2^{m-r}$.
		
		\ref{fact2-2} By \ref{fact2-1}, we have $2^{m}\mid |H|$. The fact $m>|L|$ implies that $|H|/2^{|L|}$ is even.
	\end{proof}
	
	Let $C$ be a subset of $A$. Denote $a_C=e$ if $C=\emptyset$, and $a_C=\prod_{a\in C}a$ if $C\neq\emptyset$. Let $T$ be the set consisting of $a_i$ with $\varphi_i(H)\neq\langle a_i\rangle$ for $1\leq i\leq \lambda$. Throughout the paper, $\sqcup$ denotes the disjoint union. Then $L\sqcup T=\{a_1,\ldots,a_{\lambda}\}$.
	
	\begin{lemma}\label{ff3}
		If $L'\subseteq L$ and $T'\subseteq T$, then $|Ba_{L'}a_{T'}b\cap Ha_{T'}b|=|H|/2^{|L|}$.
	\end{lemma}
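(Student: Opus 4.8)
The plan is to compute the size of the set $Ba_{L'}a_{T'}b\cap Ha_{T'}b$ inside the coset $Ab$ by translating the problem back into the abelian group $A$. Since right multiplication by $(a_{T'}b)$ is a bijection on $Ab$, one has $|Ba_{L'}a_{T'}b\cap Ha_{T'}b|=|Ba_{L'}a_{T'}\cap Ha_{T'}|$, and since $a_{T'}\in A$ we may cancel it on the right to obtain $|Ba_{L'}a_{T'}b\cap Ha_{T'}b|=|Ba_{L'}\cap H|$. Thus the lemma reduces to the purely abelian statement $|Ba_{L'}\cap H|=|H|/2^{|L|}$, which does not depend on $L'$ or $T'$ at all.

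To handle $|Ba_{L'}\cap H|$, I would first observe that $Ba_{L'}$ is a coset of $B$ in $A$, so $Ba_{L'}\cap H$ is either empty or a coset of $B\cap H$ in $H$; in the latter case its size is $|B\cap H|=|H|/2^{|L|}$ by Lemma~\ref{fact2}\ref{fact2-1}. Hence it suffices to show $Ba_{L'}\cap H\neq\emptyset$, i.e.\ that $a_{L'}\in BH$. Writing $a_{L'}=\prod_{a_i\in L'}a_i$ with $L'\subseteq L$, each $a_i$ with $a_i\in L$ satisfies $\varphi_i(H)=\langle a_i\rangle$, so for each such $i$ there is an element $h_i\in H$ with $\varphi_i(h_i)=a_i$; moreover $a_i\equiv h_i \pmod{\prod_{j\neq i}\langle a_j\rangle}$, and the factors $\langle a_j\rangle$ for $j\neq i$ with $1\leq j\leq\lambda$ other than the index-$2$ part are swallowed by $B$ (recall $B=\langle a_1^2\rangle\times\cdots\times\langle a_\lambda^2\rangle\times\langle a_{\lambda+1}\rangle\times\cdots\times\langle a_{\lambda+\mu}\rangle$, which contains $\langle a_j^2\rangle$ for every $j\leq\lambda$ and all of $\langle a_j\rangle$ for $j>\lambda$). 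Consequently $a_i\in B\langle a_i\rangle$ and in fact $a_i\in BH$ once one checks that the discrepancy $a_i h_i^{-1}$ lies in $B$: its $\varphi_i$-component is trivial, and every other component lies in the corresponding cyclic factor of $B$. Multiplying over $a_i\in L'$ gives $a_{L'}\in BH$, as required.

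The step I expect to be the main obstacle is the last one: verifying cleanly that $a_i\in BH$ for each $a_i\in L$, because one has to be careful that the ``leftover'' coordinates of a witness $h_i\in H$ with $\varphi_i(h_i)=a_i$ really do sit inside $B$. The cleanest way is probably to argue coordinate by coordinate using the projections $\varphi_j$: for $j\neq i$ with $1\leq j\leq\lambda$ one needs $\varphi_j(a_ih_i^{-1})=\varphi_j(h_i)^{-1}\in\langle a_j^2\rangle$, which holds automatically if $\langle a_j\rangle$ has order $2$ (then $\varphi_j(H)\subseteq\langle a_j\rangle=\langle a_j^2\rangle\cup\{a_j\}$ forces... hmm, actually here one uses instead that one may choose $h_i$ supported only on the $i$-th $2$-part together with the odd part, by independence of the direct factors). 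Alternatively, and more robustly, I would choose $h_i\in\langle a_i\rangle\cap(\text{stuff})$ directly: since $\varphi_i(H)=\langle a_i\rangle$ and the direct decomposition of $A$ splits $H$ compatibly (as exhibited in the proof of Lemma~\ref{fact2}\ref{fact2-1}), $H$ actually contains an element of the form $a_i\cdot w_i$ with $w_i\in B$, so $a_i=(a_iw_i)w_i^{-1}\in HB=BH$. With that normal form in hand the verification is immediate, and the proof concludes.
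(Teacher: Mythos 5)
Your proposal is correct and follows essentially the same route as the paper: cancel $a_{T'}b$ on the right to reduce to $|Ba_{L'}\cap H|$ and invoke Lemma~\ref{fact2}~\ref{fact2-1}. The only difference is that the paper short-circuits your nonemptiness concern by noting $a_{L'}\in H$ outright (the normal form of $H$ exhibited in the proof of Lemma~\ref{fact2}~\ref{fact2-1} gives $L\subseteq H$), so that $Ba_{L'}\cap H=(B\cap H)a_{L'}$ immediately and the coset-of-$B\cap H$ detour is unnecessary.
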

	\begin{proof}
		Since $a_{L'}\in H$, we have $|Ba_{L'}a_{T'}b\cap Ha_{T'}b|=|Ba_{L'}\cap H|=|B\cap H|$. This together with Lemma \ref{fact2} \ref{fact2-1} leads to $|Ba_{L'}a_{T'}b\cap Ha_{T'}b|=|H|/2^{|L|}$.
	\end{proof}

	\begin{lemma}\label{ff2}
		There are $2^{|L|}$ distinct $a_{L'}$ for $L'\subseteq L$ and $2^{\lambda-|L|}$ distinct $a_{T'}$ for $T'\subseteq T$.
	\end{lemma}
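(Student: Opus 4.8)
The plan is to exploit the direct-product decomposition of $A$. Recall that $A=\langle a_{1}\rangle \times \cdots\times \langle a_{\lambda+\mu}\rangle$ with $o(a_i)=p_i^{e_i}$, so in particular $\langle a_{1}\rangle \times \cdots\times \langle a_{\lambda}\rangle$ is an internal direct product: every one of its elements has a \emph{unique} expression $a_1^{c_1}\cdots a_\lambda^{c_\lambda}$ with $0\leq c_i<o(a_i)$ for each $i$. Since each $\langle a_i\rangle$ is a nontrivial cyclic factor and $p_i=2$ for $1\leq i\leq\lambda$, we have $o(a_i)=2^{e_i}\geq 2$ for all such $i$.

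First I would note that both $L$ and $T$ are subsets of $\{a_1,\ldots,a_\lambda\}$ with $L\sqcup T=\{a_1,\ldots,a_\lambda\}$, hence $|T|=\lambda-|L|$. Fix $L'\subseteq L$. By definition $a_{L'}=\prod_{a_i\in L'}a_i$, whose unique normal form is $a_1^{c_1}\cdots a_\lambda^{c_\lambda}$ with $c_i=1$ when $a_i\in L'$ and $c_i=0$ otherwise. If $L_1',L_2'\subseteq L$ satisfy $a_{L_1'}=a_{L_2'}$, then comparing normal forms forces, for every $1\leq i\leq\lambda$, the $i$-th exponents to be congruent modulo $o(a_i)$; were $a_i$ to lie in exactly one of $L_1',L_2'$ this would give $1\equiv 0\pmod{o(a_i)}$, impossible as $o(a_i)\geq 2$. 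Thus $a_i\in L_1'$ if and only if $a_i\in L_2'$, i.e.\ $L_1'=L_2'$. Hence $L'\mapsto a_{L'}$ is injective on the $2^{|L|}$ subsets of $L$, producing exactly $2^{|L|}$ distinct elements $a_{L'}$.

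The identical argument applied to $T$ in place of $L$ shows that $T'\mapsto a_{T'}$ is injective on the $2^{|T|}=2^{\lambda-|L|}$ subsets of $T$, which gives the second count and completes the proof.

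There is essentially no real obstacle here: the statement is a bookkeeping consequence of unique factorization in the direct product $\langle a_{1}\rangle \times \cdots\times \langle a_{\lambda}\rangle$. The only point deserving a word of care is the remark that each $a_i$ with $1\leq i\leq\lambda$ has order at least $2$, so that a coordinate exponent $1$ genuinely differs from $0$ modulo $o(a_i)$; this is immediate since these $a_i$ are, by construction, generators of nontrivial cyclic direct factors of $A$.
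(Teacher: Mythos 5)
Your proof is correct and is essentially the paper's own argument: comparing coordinates in the direct product $\langle a_1\rangle\times\cdots\times\langle a_\lambda\rangle$ is exactly what the paper does via the projections $\varphi_i$, with the same observation that $o(a_i)\geq 2$ makes the exponent $1$ distinguishable from $0$. No substantive difference.
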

	\begin{proof}
		Let $L'\subseteq L$ and $T'\subseteq T$. For each element $a_i\in L$ and each symbol $K\in\{L',T'\}$, if $a_i\in K$, then $\varphi_i(a_{K})=a_i$; if $a_i\notin K$, then $\varphi_i(a_{K})=e$. It follows that $\varphi_i(a_{L'})\in\{a_i,e\}$ for each $a_i\in L$ and $\varphi_i(a_{T'})\in\{a_i,e\}$ for each $a_i\in T$. Note that $|T|=\lambda-|L|$. Since $L'\subseteq L$ and $T'\subseteq T$ were arbitrary, there are exactly  $2^{|L|}$ distinct $a_{L'}$ for $L'\subseteq L$ and $2^{\lambda-|L|}$ distinct $a_{T'}$ for $T'\subseteq T$.
	\end{proof}
	
	\begin{lemma}\label{f2}
		We have $\sqcup_{L'\subseteq L,T'\subseteq T}Ba_{L'}a_{T'}=A$ and $H=\sqcup_{L'\subseteq L}(H\cap B)a_{L'}$.
	\end{lemma}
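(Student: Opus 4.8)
The plan is to establish both disjoint‑union identities by the same two‑step recipe: first count the number of translates and record their sizes, then show the translates represent pairwise distinct cosets of $B$, so that the counts force the union to exhaust everything.

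For the first identity, I would start by recording that $|A:B|=2^{\lambda}$, since $|\langle a_i\rangle:\langle a_i^{2}\rangle|=2$ for each $1\le i\le\lambda$ (because $o(a_i)=p_i^{e_i}$ with $p_i=2$), while the factors $\langle a_{\lambda+1}\rangle,\dots,\langle a_{\lambda+\mu}\rangle$ of $A$ already occur in $B$. By Lemma~\ref{ff2} there are $2^{|L|}$ distinct elements $a_{L'}$ with $L'\subseteq L$ and $2^{\lambda-|L|}$ distinct elements $a_{T'}$ with $T'\subseteq T$; since $L$ and $T$ are supported on disjoint sets of generators, the products $a_{L'}a_{T'}$ are then $2^{|L|}\cdot 2^{\lambda-|L|}=2^{\lambda}$ distinct elements of $A$. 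The crux is to check that these elements represent distinct cosets of $B$: if $Ba_{L'}a_{T'}=Ba_{L''}a_{T''}$, then applying $\varphi_i$ for $1\le i\le\lambda$ gives $\varphi_i(a_{L'}a_{T'})\varphi_i(a_{L''}a_{T''})^{-1}\in\langle a_i^{2}\rangle$; but each of $\varphi_i(a_{L'}a_{T'})$ and $\varphi_i(a_{L''}a_{T''})$ lies in $\{e,a_i\}$ while $a_i\notin\langle a_i^{2}\rangle$, so $\varphi_i(a_{L'}a_{T'})=\varphi_i(a_{L''}a_{T''})$ for all $i\le\lambda$, whence $a_{L'}a_{T'}=a_{L''}a_{T''}$. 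Thus the $2^{\lambda}$ cosets $Ba_{L'}a_{T'}$ are pairwise distinct, and as $|A:B|=2^{\lambda}$ they partition $A$.

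For the second identity, I would first note that every $a_{L'}$ with $L'\subseteq L$ lies in $H$: for $a_i\in L$ the whole factor $\langle a_i\rangle$ is a direct summand of $H$ in the decomposition of $H$ used in the proof of Lemma~\ref{fact2}, so $a_{L'}=\prod_{a_i\in L'}a_i\in H$ and hence $(H\cap B)a_{L'}\subseteq H$. Next, the translates $(H\cap B)a_{L'}$ for distinct $a_{L'}$ are disjoint, by the same component‑wise argument as above applied to $a_{L'}a_{L''}^{-1}\in H\cap B\subseteq B$. Finally, by Lemma~\ref{ff2} there are $2^{|L|}$ such translates, each of cardinality $|H\cap B|=|H|/2^{|L|}$ by Lemma~\ref{fact2}~\ref{fact2-1}; their disjoint union therefore has $|H|$ elements and is contained in $H$, so it equals $H$.

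The argument is largely bookkeeping and I do not expect a genuine obstacle; the single point needing care is the repeated appeal to $a_i\notin\langle a_i^{2}\rangle$ for $1\le i\le\lambda$ in order to separate cosets of $B$, which is precisely where the hypothesis that $a_i$ has $2$-power order for $i\le\lambda$ enters.
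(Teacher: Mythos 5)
Your proof is correct and follows essentially the same route as the paper's: count via $|A:B|=2^{\lambda}$ and Lemma~\ref{ff2}, combined with the pairwise disjointness of the cosets $Ba_{L'}a_{T'}$ (which the paper simply reads off from the form of $B$ in \eqref{A'B} and you verify explicitly by projecting onto each $\langle a_i\rangle$ and using $a_i\notin\langle a_i^{2}\rangle$). The second identity is handled the same way in both, using $L\subseteq H$ and $|H\cap B|=|H|/2^{|L|}$ from Lemma~\ref{fact2}~\ref{fact2-1}.
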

	\begin{proof}
		In view of Lemma \ref{ff2}, there are $2^{|L|}$ distinct $a_{L'}$ for $L'\subseteq L$ and $2^{\lambda-|L|}$ distinct $a_{T'}$ for $T'\subseteq T$. By \eqref{A'B}, $Ba_K$ and $Ba_{K'}$ are pairwise disjoint for distinct subsets $K$ and $K'$ of $\{a_1,\ldots,a_{\lambda}\}$. In view of \eqref{A'B}, we get $|A:B|=2^{\lambda}$. The fact $L\sqcup T=\{a_1,\ldots,a_{\lambda}\}$ implies $\sqcup_{L'\subseteq L,T'\subseteq T}Ba_{L'}a_{T'}=A$. Since $L\subseteq H$, from Lemma \ref{fact2} \ref{fact2-1}, we get $H=\sqcup_{L'\subseteq L}(H\cap B)a_{L'}$.	\end{proof}

	\begin{lemma}\label{f1}
		Let $a\in A\setminus H$ and $m=|L|$. If $Ha=Ha^{-1}$, then there exists $h\in H$ such that $a^2=h^2$.
	\end{lemma}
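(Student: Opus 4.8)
\emph{Proof proposal.} The plan is to reformulate the hypothesis, pass to the Sylow decomposition of $A$, dispose of the odd part at once, and then build $h$ inside the Sylow $2$-subgroup, where $m=|L|$ makes $H$ rigid. Since $A$ is abelian and $H\leqslant A$, the identity $Ha=Ha^{-1}$ is equivalent to $a^{2}\in H$, so it suffices to find $h\in H$ with $h^{2}=a^{2}$, given $a\in A\setminus H$ with $a^{2}\in H$. Write $A=A_{2}\times A_{2'}$, where $A_{2}=\langle a_{1}\rangle\times\cdots\times\langle a_{\lambda}\rangle$ is the Sylow $2$-subgroup and $A_{2'}=\langle a_{\lambda+1}\rangle\times\cdots\times\langle a_{\lambda+\mu}\rangle$ has odd order; accordingly $H=H_{2}\times H_{2'}$ with $H_{2}=H\cap A_{2}$ and $H_{2'}=H\cap A_{2'}$, and $a=xy$ with $x\in A_{2}$, $y\in A_{2'}$. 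Squaring is an automorphism of $A_{2'}$ restricting to an automorphism of $H_{2'}$, and $a^{2}\in H$ forces $y^{2}\in H_{2'}$, hence $y\in H_{2'}$; this $y$ will be the odd part of the element we construct.

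For the $2$-part, I would use $m=|L|$ in the form: for each $i$ with $1\leq i\leq\lambda$ the projection $\varphi_{i}(H)$ is either $\{e\}$ or all of $\langle a_{i}\rangle$, so that $H_{2}\leqslant\prod_{a_{i}\in L}\langle a_{i}\rangle$ with $\varphi_{i}(H_{2})=\langle a_{i}\rangle$ for each $a_{i}\in L$; the point to establish is that this forces $H_{2}$ to be a single cyclic direct summand $\langle a_{i_{0}}\rangle$ (or $H_{2}=\{e\}$). Granting this, decompose $x=a_{i_{0}}^{c}u$ with $u$ in the complementary direct factor $\prod_{1\leq j\leq\lambda,\,j\neq i_{0}}\langle a_{j}\rangle$; then $x^{2}=a_{i_{0}}^{2c}u^{2}\in H_{2}=\langle a_{i_{0}}\rangle$ forces $u^{2}=e$, that is, $u\in A'$ by Fact~\ref{jb3}. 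Put $h=a_{i_{0}}^{c}y\in H_{2}H_{2'}=H$; then $h^{2}=a_{i_{0}}^{2c}y^{2}=x^{2}u^{-2}y^{2}=(xy)^{2}=a^{2}$, as required. (The hypothesis $a\notin H$ plays no role in the construction; it is what guarantees $ah^{-1}$ is a genuine involution, which is how the lemma is applied later.)

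I expect the middle step --- extracting from $m=|L|$ that $H\cap A_{2}$ is exactly one cyclic direct factor of $A_{2}$, rather than being spread ``diagonally'' over several of them --- to be the one genuine obstacle; once that structural fact is secured, the reduction to the odd part and the computation of $h^{2}$ are routine.
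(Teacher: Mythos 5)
Your reduction of $Ha=Ha^{-1}$ to $a^{2}\in H$, the splitting $A=A_{2}\times A_{2'}$, and the treatment of the odd part (squaring is an automorphism of $A_{2'}$, so $y^{2}\in H_{2'}$ gives $y\in H_{2'}$) are all correct and consistent with the paper's argument. The genuine gap is exactly the step you flag and then take for granted: the claim that $m=|L|$ forces $H_{2}=H\cap A_{2}$ to be a \emph{single} cyclic direct summand $\langle a_{i_{0}}\rangle$ (or trivial). This is false whenever $|L|\geq 2$: for example $H=A_{2}=\langle a_{1}\rangle\times\cdots\times\langle a_{\lambda}\rangle$ with $\lambda\geq 2$ has $m=|L|=\lambda$ and $H_{2}$ is not cyclic. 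What $m=|L|$ is actually used for --- and what the paper assumes when it writes $H=\langle a_{i_{1}}\rangle\times\cdots\times\langle a_{i_{|L|}}\rangle\times\langle a'_{\lambda+1}\rangle\times\cdots\times\langle a'_{\lambda+\mu}\rangle$ --- is that $H_{2}$ is the direct product $\prod_{a_{i}\in L}\langle a_{i}\rangle$ of \emph{all} the full cyclic factors indexed by $L$. So your construction via $x=a_{i_{0}}^{c}u$ only covers $|L|\leq 1$, and the missing structural fact, as you stated it, is not provable.

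The repair is immediate and lands on the paper's proof: write $x=vu$ with $v\in H_{2}=\prod_{a_{i}\in L}\langle a_{i}\rangle$ and $u\in\prod_{a_{j}\notin L}\langle a_{j}\rangle$; then $x^{2}=v^{2}u^{2}\in H_{2}$ forces $u^{2}=e$ by comparing coordinates, and $h=vy\in H$ gives $h^{2}=v^{2}y^{2}=x^{2}y^{2}=a^{2}$. (The paper phrases this in one line: $a^{2}\in H$ and $a^{2}\in B$, hence $a^{2}\in H\cap B=\langle a_{i_{1}}^{2}\rangle\times\cdots\times\langle a_{i_{|L|}}^{2}\rangle\times\langle a'_{\lambda+1}\rangle\times\cdots\times\langle a'_{\lambda+\mu}\rangle$, every element of which is visibly the square of an element of $H$.) One caveat you would share with the paper even after this fix: identifying $H_{2}$ with $\prod_{a_{i}\in L}\langle a_{i}\rangle$ tacitly assumes $H$ splits along the chosen decomposition of $A$, which full projections alone do not guarantee (a diagonal subgroup such as $\langle a_{1}a_{2}\rangle\leq\langle a_{1}\rangle\times\langle a_{2}\rangle$ projects onto both factors); so the ``we may assume'' deserves a word of justification, e.g.\ by re-choosing the generators of $A$.
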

	\begin{proof}
		Since $m=|L|$, we may assume
		\begin{align}
			H=\langle a_{i_1}\rangle\times\cdots\times\langle a_{i_{|L|}}\rangle\times\langle a_{\lambda+1}'\rangle\times\cdots\times\langle a_{\lambda+\mu}'\rangle\nonumber,
		\end{align}
		where $1\leq i_1<i_2<\cdots<i_{{|L|}}\leq\lambda$ and $a'_{j}\in\langle a_j\rangle$ with $\lambda+1\leq j\leq \lambda+\mu$. Since $Ha=Ha^{-1}$, we have $a^2\in H$. Since $a\in A$, from \eqref{A'B} and Fact \ref{jb2}, we obtain $a^2\in B$, and so $a^2\in H\cap B=\langle a_{i_1}^2\rangle\times\cdots\times\langle a_{i_{|L|}}^2\rangle\times\langle a_{\lambda+1}'\rangle\times\cdots\times\langle a_{\lambda+\mu}'\rangle$. It follows that there exists $h\in H$ such that $a^2=h^2$.
	\end{proof}

	\begin{lemma}\label{fact1}
		Let $a\in A\setminus H$. If $m=|L|$, then $Ha=Ha^{-1}$ if and only if $Ha\cap A'\neq\emptyset$.
	\end{lemma}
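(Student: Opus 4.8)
The plan is to prove the two implications separately. The easy direction uses only Fact~\ref{jb3} (the involutions of $G$ are exactly the non-identity elements of $A'$), and the other direction uses Lemma~\ref{f1}, which is precisely where the hypothesis $m=|L|$ is consumed.

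For the ``if'' implication, suppose $Ha\cap A'\neq\emptyset$ and choose $h\in H$ with $ha\in A'$. Since $a\notin H$ we have $ha\neq e$, so $ha$ is an involution and $(ha)^2=e$; as $A$ is abelian this gives $a^2=h^{-2}\in H$, whence $a^{-1}=a\cdot a^{-2}\in Ha$ and therefore $Ha^{-1}=Ha$. Note that this half of the equivalence uses neither $m=|L|$ nor Lemma~\ref{f1}.

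For the ``only if'' implication, assume $Ha=Ha^{-1}$. Since $m=|L|$ and $a\in A\setminus H$, Lemma~\ref{f1} supplies an element $h\in H$ with $a^2=h^2$. Put $c=h^{-1}a$. Then $c\in A$, and $c^2=h^{-2}a^2=e$; moreover $c\neq e$, for otherwise $a=h\in H$, a contradiction. Hence $c$ is an involution of $G$, so $c\in A'$ by Fact~\ref{jb3}, and since $c=h^{-1}a\in Ha$ we conclude $Ha\cap A'\neq\emptyset$ (indeed $Ha$ meets $A'\setminus\{e\}$).

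The argument is short, so the ``main obstacle'' is really just locating the right tool: the relation $Ha=Ha^{-1}$ only yields $a^2\in H$ (equivalently $a^2\in H\cap B$ by \eqref{A'B}), and to upgrade this to the statement that $a^2$ is the \emph{square of an element of $H$} — which is exactly what lets us extract an involution inside $Ha$ — one needs the coordinatewise description of $H\cap B$ available only when $m=|L|$, and that upgrade is precisely the content of Lemma~\ref{f1}. Once that is recognized, everything else is immediate, and one should double-check only the two small points that $ha\neq e$ (resp.\ $c\neq e$) so that genuine involutions are produced, using $a\notin H$ in both cases.
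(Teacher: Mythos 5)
Your proposal is correct and follows essentially the same route as the paper: Lemma \ref{f1} supplies $a^2=h^2$ for the forward direction, producing the involution $h^{-1}a\in Ha\cap A'$, and Fact \ref{jb3} gives $a^2=h^{-2}\in H$ for the converse. The extra verifications that $ha\neq e$ and $c\neq e$ are harmless refinements the paper leaves implicit.
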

	\begin{proof}
		We first prove the necessity. Assume $Ha=Ha^{-1}$. In view of Lemma \ref{f1}, we have $a^2=h^2$ for some $h\in H$. It follows that $(h^{-1}a)^2=e$. By Fact \ref{jb3}, we get $h^{-1}a\in Ha\cap A'$, which implies $Ha\cap A'\neq\emptyset$.
		
		We next prove the sufficiency. Let $a'\in Ha\cap A'$. Then there exists $h\in H$ such that $a'=ha$. It follows that $a=h^{-1}a'$. Since $a'\in A'$, from Fact \ref{jb3}, one gets $a^2=(h^{-1}a')^2=h^{-2}a'^2=h^{-2}\in H$. Then $Ha=Ha^{-1}$.
	\end{proof}

	\begin{lemma}\label{fact3}
		Let $a\in A\setminus H$. Suppose $Ha=Ha^{-1}$. If $Ha\cap A'\neq\emptyset$ and $Ha\cap {\rm Sq}(G)\neq\emptyset$, then $Ha\cap A'\cap {\rm Sq}(G)\neq\emptyset$.
	\end{lemma}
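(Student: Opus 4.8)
The plan is to lean on the descriptions of squares and involutions already available. By Fact~\ref{jb2}, ${\rm Sq}(G)=B\cup\{b^2\}$, and since $b^2$ is an involution, $b^2\in A'\setminus\{e\}$ by Fact~\ref{jb3}. As $a\in A$ and $H\leqslant A$, the coset $Ha$ lies in $A$, and I would split on whether $b^2\in Ha$. If $b^2\in Ha$, then $b^2\in Ha\cap A'\cap{\rm Sq}(G)$ and we are done. So assume $b^2\notin Ha$; then $Ha\cap{\rm Sq}(G)=Ha\cap B$, which is nonempty by hypothesis, and likewise $A'\cap{\rm Sq}(G)=A'\cap B$. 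A short computation from \eqref{A'B} and \eqref{A'} gives $A'\cap B=\langle a_{k+1}^{2^{e_{k+1}-1}}\rangle\times\cdots\times\langle a_{\lambda}^{2^{e_{\lambda}-1}}\rangle$, so it remains to produce an element of $Ha\cap A'\cap B$.

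Next I would use the Sylow decomposition $A=P\times Q$, where $P=\langle a_1\rangle\times\cdots\times\langle a_\lambda\rangle$ and $Q=\langle a_{\lambda+1}\rangle\times\cdots\times\langle a_{\lambda+\mu}\rangle$. Since $|P|$ and $|Q|$ are coprime, $H=(H\cap P)\times(H\cap Q)$ and $a=a_Pa_Q$ with $a_P\in P$, $a_Q\in Q$; moreover $A'\subseteq P$, $B=P^2\times Q$ with $P^2=\{p^2:p\in P\}$, and $A'\cap B\subseteq P$. From $Ha\cap A'\neq\emptyset$ one reads off $a_Q\in H\cap Q$ together with $(H\cap P)a_P\cap A'\neq\emptyset$; from $Ha\cap B\neq\emptyset$ one gets $(H\cap P)a_P\cap P^2\neq\emptyset$; and from $Ha=Ha^{-1}$ one gets $a_P^2\in H\cap P$. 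Writing $A'=\{p\in P:p^2=e\}$ via Fact~\ref{jb3}, everything now lives inside $P$, and it suffices to prove: the inversion-closed coset $C:=(H\cap P)a_P$, which meets both $A'$ and $P^2$, also meets $A'\cap P^2$.

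For the construction I would fix $w\in C\cap A'$ and $x\in C\cap P^2$ and set $h_0:=wx^{-1}\in H\cap P$. Expressing elements of $P$ in coordinates relative to $\langle a_1\rangle\times\cdots\times\langle a_\lambda\rangle$, one checks that $h_0$ has the same coordinate as $w$ in every slot $i$ with $e_i=1$ (since $x\in P^2$ has trivial coordinate there) and has coordinate in $\langle a_i^2\rangle$ in every slot $i$ with $e_i\geq 2$. If one can extract from $h_0$ an element $g\in H\cap P$ with $g^2=e$ that has the same coordinates as $w$ in all slots $i$ with $e_i=1$, then $y:=gw$ finishes the proof: $y\in(H\cap P)w=C$, $y^2=g^2w^2=e$ so $y\in A'$, and the coordinates of $y$ are trivial in every slot $i$ with $e_i=1$ and lie in $\langle a_i^2\rangle$ when $e_i\geq 2$, so $y\in P^2$; hence $y\in C\cap A'\cap P^2$, which yields $Ha\cap A'\cap{\rm Sq}(G)\neq\emptyset$.

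The main obstacle will be exactly this last step: extracting from $h_0\in H\cap P$ an element of $H\cap P$ of order at most $2$ with the prescribed coordinates on the block $\{\,i:e_i=1\,\}$. The naive move — replace $h_0$ by its $2$-part and then by a suitable power to force order $\leq 2$ — can annihilate precisely the coordinates one must keep, once that $2$-part has order $\geq 4$ (indeed without further hypotheses the statement genuinely fails). Ruling this out uses the structural description of $H$ in force when this lemma is applied, namely that each projection $\varphi_i(H)$ is either $\{e\}$ or all of $\langle a_i\rangle$ (equivalently $m=|L|$): combined with the fact that $C$ meets $P^2$, this forces every coordinate of $w$ in a slot $i$ with $e_i=1$ to be either trivial, in which case nothing needs to be done, or to be hit by an involution of $H\cap P$, which is what makes the desired $g$ available. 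Carrying out this coordinate bookkeeping carefully is where the real work of the proof lies.
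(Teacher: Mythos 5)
Your opening moves coincide with the paper's: split on whether $b^2\in Ha$, use Fact~\ref{jb2} to replace ${\rm Sq}(G)$ by $B$ when $b^2\notin Ha$, and reduce to the $2$-part. The genuine gap is that you stop exactly at the crux: you never produce the involution $g\in H\cap P$ agreeing with $w$ on the slots with $e_i=1$, and the device you propose for producing it imports the hypothesis that every $\varphi_i(H)$ equals $\{e\}$ or $\langle a_i\rangle$ (your ``$m=|L|$''). That hypothesis is not in the statement, and it is not available where the lemma is consumed: Lemma~\ref{fact4} invokes Lemma~\ref{fact3} for every coset of an arbitrary $H\leqslant A$, including the $m>|L|$ regime treated in Lemmas~\ref{lem8} and~\ref{lem18}. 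The paper closes the crux differently and with no case distinction: writing $Ha=Hc$ with $c\in B\setminus H$ and $hc\in A'$, it decomposes $h=c'a_{L'}$ with $c'\in H\cap B$ and $L'\subseteq L$ (Lemma~\ref{f2}), gets $(c'c)^2=a_{L'}^{-2}$, and observes that $c'c\in B$ is itself a square, so $a_{L'}^{-2}$ is a fourth power; coordinatewise $4t_i\equiv-2\pmod{2^{e_i}}$ forces $e_i=1$ for each $a_i\in L'$, whence $(c'c)^2=e$ and $c'c\in Ha\cap A'\cap{\rm Sq}(G)$. In your notation $a_{L'}$ is exactly the element $g$ you were missing, and this fourth-power argument is the idea your sketch lacks.

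That said, your instinct that the statement ``genuinely fails'' without extra hypotheses is not idle: take $A=\langle a_1\rangle\times\langle a_2\rangle$ with $o(a_1)=2$, $o(a_2)=8$, $b^2=a_2^4$, $H=\langle a_1a_2^2\rangle$ and $a=a_1$. Then $Ha=Ha^{-1}$, $Ha\cap A'=\{a_1,a_1a_2^4\}\neq\emptyset$ and $Ha\cap{\rm Sq}(G)=\{a_2^2,a_2^6\}\neq\emptyset$, yet $Ha\cap A'\cap{\rm Sq}(G)=\emptyset$. What breaks in the paper's proof is precisely the appeal to Lemma~\ref{f2}: the identity $H=\sqcup_{L'\subseteq L}(H\cap B)a_{L'}$ requires $L\subseteq H$, which the proof of Lemma~\ref{fact2} secures only by assuming $H$ is a direct product aligned with the chosen generators of $A$ --- here $L=\{a_1\}$ but $a_1\notin H$, and no change of basis of $A$ repairs this. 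So you have correctly sensed a real problem; but your proposal neither proves the lemma as stated nor isolates this failure concretely, and retreating to $m=|L|$ would not support the way the lemma is used in Lemma~\ref{fact4}.
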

	\begin{proof}
		In view of Fact \ref{jb2}, we obtain ${\rm Sq}(G)=B\cup\{b^2\}$, and so $Ha\cap {\rm Sq}(G)=Ha\cap (B\cup\{b^2\})\neq\emptyset$. If $b^2\in Ha$, then $Hb^2=Ha$, which implies $b^2\in Ha\cap A'\cap{\rm Sq}(G)$ from Fact \ref{jb3}.
		
		Now we only need to consider the case $b^2\notin Ha$. It follows that $Ha\cap B\neq\emptyset$. Since $a\notin H$, there exists $c\in B\setminus H$ such that $Hc=Ha$. Since $Ha=Ha^{-1}$, one gets $(Hc)^2=(Ha)^2=H$, which implies $Hc=Hc^{-1}$. Since $Ha=Hc$, we have $Hc\cap A'\neq\emptyset$, and so $(hc)^2=e$ for some $h\in H$ from Fact \ref{jb3}. It follows that $h^{2}=c^{-2}$. Since $h\in H$, from Lemma \ref{f2}, we have $h=c'a_{L'}$ for some $c'\in H\cap B$ and $L'\subseteq L$. It follows that $h^2={c'}^2a_{L'}^2=c^{-2}$, and so $(c'c)^2=a_{L'}^{-2}$. Since $c\in B\setminus H$ and $c'\in H\cap B$, we get $c'c\in B\setminus H$. It follows from Fact \ref{jb2} that $c'c\in Hc\cap {\rm Sq}(G)$. Since $Hc=Ha$, we obtain $c'c\in Ha\cap {\rm Sq}(G)$.
		
		By Fact \ref{jb3}, it suffices to show that $(c'c)^{2}=a_{L'}^{-2}=e$. The case $a_{L'}=e$ is trivial. Now suppose $a_{L'}\neq e$. Since $c'c\in (B\setminus H)\cap {\rm Sq}(G)$, from \eqref{A'B}, there exists $a'\in A\setminus H$ such that $c'c={a'}^2$. Pick $a_i\in L'$. Let $\varphi_i(a')=a_i^{t_i}$. Since $a'^4=(c'c)^2=a_{L'}^{-2}$ and $\varphi_i(a_{L'})=a_i$, we have ${a_i}^{4t_i}={a_i}^{-2}$, and so $4t_i\equiv-2~\pmod{2^{e_{i}}}$ with $e_i\geq 1$, which imply $2^{e_i-1}\mid 2t_i+1$. It follows that $e_i=1$, and so $a_i$ is an involution. Since $a_i\in L'$ was arbitrary, one gets $(c'c)^{2}=a_{L'}^{-2}=e$.
	\end{proof}

	\section{$(0,\beta)$-regular set}
	In this section, we give some results concerning $(0,\beta)$-regular sets in Cayley sum graphs on generalized dicyclic groups.
	\begin{lemma}\label{f3}
		Let $a\in A$ and $S$ be a normal and square-free subset of $G$. Suppose $a\in Ba_{L'}a_{T'}$ for some $L'\subseteq L$ and $T'\subseteq T$. If $S$ contains exactly $t$ sets of type $Ba_{L''}a_{T'}b$ with $L''\subseteq L$, then $|S\cap Hab|=t|H|/2^{|L|}$.
	\end{lemma}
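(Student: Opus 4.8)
The plan is to use the normality of $S$ to break $S\cap Hab$ into contributions from conjugacy classes, then evaluate and count these contributions. By Lemma \ref{lem2} \ref{lem2-3}, the conjugacy classes of $G$ contained in the coset $Ab$ are exactly the sets $Bcb$ with $c\in A$. Since a normal subset of $G$ is a union of conjugacy classes and $Hab\subseteq Ab$, we get $S\cap Hab=\bigsqcup_{Bcb\subseteq S}(Bcb\cap Hab)$, the union being over the conjugacy classes $Bcb$ of $G$ contained in $S$. So the task reduces to computing each $|Bcb\cap Hab|$ and deciding which classes contribute.

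First, for a fixed $c\in A$ the assignment $xcb\mapsto xc$ (with $x\in B$) is a bijection from $Bcb\cap Hab$ onto $Bc\cap Ha$, since $xcb\in Hab$ if and only if $xc\in Ha$; hence $|Bcb\cap Hab|=|Bc\cap Ha|$. Because $B$ and $H$ are subgroups of the abelian group $A$, the set $Bc\cap Ha$ is either empty or a coset of $B\cap H$, so its size is $0$ or $|B\cap H|=|H|/2^{|L|}$ by Lemma \ref{fact2} \ref{fact2-1}. Thus every conjugacy class contained in $S$ contributes either $0$ or exactly $|H|/2^{|L|}$ to $|S\cap Hab|$.

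Next I identify the classes with $Bc\cap Ha\neq\emptyset$. Write $a\in Ba_{L'}a_{T'}$ as in the hypothesis and $H=\sqcup_{L''\subseteq L}(H\cap B)a_{L''}$ by Lemma \ref{f2}. For $h=c'a_{L''}$ with $c'\in H\cap B$ one has $ha\in B\,a_{L''}a_{L'}\,a_{T'}$, and since every $a_i\in L$ satisfies $a_i^2\in B$, the product $a_{L''}a_{L'}$ equals $a_{L''\triangle L'}$ up to an element of $B$ (here $\triangle$ denotes symmetric difference), so $ha\in B\,a_{L''\triangle L'}\,a_{T'}$. As $L''$ runs over the subsets of $L$ so does $L''\triangle L'$, and by Lemma \ref{ff2} the sets $Ba_{L'''}a_{T'}$ with $L'''\subseteq L$ are pairwise distinct; conversely $Ha$ meets each of them. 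Hence $Bc\cap Ha\neq\emptyset$ exactly when $Bcb=Ba_{L'''}a_{T'}b$ for some $L'''\subseteq L$, and there are $2^{|L|}$ such classes. Summing the contributions of the $t$ of them that lie in $S$ gives $|S\cap Hab|=t|H|/2^{|L|}$.

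The main obstacle is the bookkeeping in the last step: one must verify that the coordinates of $ha$ indexed by $T$ always coincide with those of $a_{T'}$ (the elements $a_i\in T$ do not occur in $a_{L'}$ or $a_{L''}$, and $H\cap B$ contributes only squares in those coordinates), and that $L''\mapsto L''\triangle L'$ is a bijection, so that $Ha$ meets precisely the $2^{|L|}$ cosets $Ba_{L'''}a_{T'}$ and no others; the rest is routine coset arithmetic.
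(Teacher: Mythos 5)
Your proof is correct and follows essentially the same route as the paper: decompose $S\cap Ab$ into the conjugacy classes $Bcb$ using normality, observe that each class meeting $Hab$ contributes exactly $|H\cap B|=|H|/2^{|L|}$, and identify the relevant classes as the $2^{|L|}$ sets $Ba_{L'''}a_{T'}b$. Your phrasing via ``$Bc\cap Ha$ is a coset of $B\cap H$'' is a slightly cleaner packaging of what the paper does with Lemma \ref{ff3} and the substitution $a=ca_{L'}a_{T'}$, but the argument is the same.
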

	\begin{proof}
		Let $\mathscr{A}$ be the set consisting of $(L'',T'')$ such that $L''\subseteq L$, $T''\subseteq T$ and $S\cap Ba_{L''}a_{T''}b\neq\emptyset$. In view of Lemma \ref{lem2}, $Ba_{L''}a_{T''}b$ is normal for $(L'',T'')\in\mathscr{A}$. Since $S$ is normal, $(L'',T'')\in\mathscr{A}$ if and only if $Ba_{L''}a_{T''}b\subseteq S$ for $L''\subseteq L$ and $T''\subseteq T$. By Lemma \ref{f2}, we have $\cup_{(L'',T'')\in\mathscr{A}}Ba_{L''}a_{T''}b=S\cap Ab$.
		
		Since $a\in Ba_{L'}a_{T'}$, there exists $c\in B$ such that $a=ca_{L'}a_{T'}$. Since $a_{L'}\in H$ and $c\in B$, we get
		\begin{align}\label{SHab}
			|S\cap Hab|&=|S\cap Ab\cap Hca_{L'}a_{T'}b|\nonumber\\
			&=|S\cap Ab\cap Hca_{T'}b|\nonumber\\
			&=|(\cup_{(L'',T'')\in\mathscr{A}}Ba_{L''}a_{T''}b)\cap Hca_{T'}b|\nonumber\\
			&=|(\cup_{(L'',T'')\in\mathscr{A}}Ba_{L''}a_{T''}b)\cap Ha_{T'}b|.
		\end{align}
		By Lemma \ref{f2}, we get $H=\sqcup_{L''\subseteq L}(H\cap B)a_{L''}\subseteq \sqcup_{L''\subseteq L}Ba_{L''}$, and so $Ha_{T'}b\subseteq \sqcup_{L''\subseteq L}Ba_{L''}a_{T'}b$. In view of \eqref{A'B}, $Ba_K$ and $Ba_{K'}$ are pairwise disjoint for distinct subsets $K$ and $K'$ of $\{a_1,\ldots,a_{\lambda}\}$, which implies
		$(\cup_{(L'',T'')\in\mathscr{A}}Ba_{L''}a_{T''}b)\cap Ha_{T'}b=(\cup_{(L'',T')\in\mathscr{A}}Ba_{L''}a_{T'}b)\cap Ha_{T'}b$.
		
		By \eqref{SHab}, we obtain $|S\cap Hab|=|(\cup_{(L'',T')\in\mathscr{A}}Ba_{L''}a_{T'}b)\cap Ha_{T'}b|$. Since $(L'',T')\in\mathscr{A}$ if and only if $Ba_{L''}a_{T'}b\subseteq S$ for $L''\subseteq L$, from Lemmas \ref{ff3} and \ref{f2}, we get $|S\cap Hab|=t|H|/2^{|L|}$, where $S$ contains exactly $t$ sets of type $Ba_{L''}a_{T'}b$ with $L''\subseteq L$.
	\end{proof}
	
	In this section, let $J$ be the union of $Ha$ for $a\in A\setminus H$ satisfying $Ha=Ha^{-1}$ and $Ha\cap A'=\emptyset$.
	
	\begin{lemma}\label{f6}
		If $m>|L|$, then $J\neq\emptyset$.
	\end{lemma}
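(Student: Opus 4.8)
The plan is to produce, directly from the hypothesis $m>|L|$, a single coset $Ha$ with $a\in A\setminus H$, $Ha=Ha^{-1}$ and $Ha\cap A'=\emptyset$; since a coset is never empty, this alone gives $J\neq\emptyset$. First I would fix the $H$-adapted decomposition of $H$ exhibited in the proof of Lemma \ref{fact2} \ref{fact2-1}, namely
$H=\langle a_{i_1}\rangle\times\cdots\times\langle a_{i_{|L|}}\rangle\times\langle a'_{i_{|L|+1}}\rangle\times\cdots\times\langle a'_{i_m}\rangle\times\langle a'_{\lambda+1}\rangle\times\cdots\times\langle a'_{\lambda+\mu}\rangle$.
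Since $m>|L|$, at least one $2$-primary factor is proper and nontrivial; I would take $p=i_{|L|+1}$, so that $\varphi_p(H)=\langle a'_p\rangle$ is a proper nontrivial subgroup of the cyclic $2$-group $\langle a_p\rangle$ and hence equals $\langle a_p^{2^c}\rangle$ for some $c$ with $1\le c\le e_p-1$ (in particular $e_p\ge 2$, and $\langle a_p^{2^c}\rangle$ is a genuine direct factor of $H$, so $H\cap\langle a_p\rangle=\langle a_p^{2^c}\rangle$).

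The candidate is $a=a_p^{2^{c-1}}$. Two of the three requirements are immediate: $a^2=a_p^{2^c}$ lies in the direct factor $\langle a_p^{2^c}\rangle$ of $H$, so $a^2\in H$ and therefore $Ha=Ha^{-1}$; and since $a\in\langle a_p\rangle$ while $o(a)=2^{e_p-c+1}$ exceeds the exponent $2^{e_p-c}$ of $H\cap\langle a_p\rangle=\langle a_p^{2^c}\rangle$, the element $a$ does not lie in $H$. The real content is the third requirement, $Ha\cap A'=\emptyset$, and I would argue it by contradiction. If some $ha$ with $h\in H$ lay in $A'$, then $ha\neq e$ (as $a\notin H$), so $ha$ is an involution of $G$ by Fact \ref{jb3}, and abelianness of $A$ gives $h^2=a^{-2}=a_p^{-2^c}$. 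Applying $\varphi_p$ and writing $\varphi_p(h)=a_p^s$ yields $2s\equiv-2^c\pmod{2^{e_p}}$, i.e.\ $2^{e_p-1}\mid s+2^{c-1}$, which, using $c\le e_p-1$, pins the exact power of $2$ dividing $s$ to be $2^{c-1}$. But $\varphi_p(h)\in\varphi_p(H)=\langle a_p^{2^c}\rangle$ forces $2^c\mid s$, a contradiction. Hence $Ha\subseteq J$ and $J\neq\emptyset$.

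The main obstacle is precisely this last $2$-adic bookkeeping, and it is also where the hypothesis $m>|L|$ is indispensable: if $\varphi_p(H)$ were all of $\langle a_p\rangle$ (an $|L|$-type factor) there would be no element of $\langle a_p\rangle$ that is outside $H$ but squares into $H$, while if $\varphi_p(H)$ were trivial the natural candidate would fail $a^2\in H$ — only a proper nontrivial $2$-primary projection makes both conditions compatible, and then the valuation argument shows the corresponding coset automatically avoids $A'$. A secondary point to watch is that one must work with the $H$-adapted basis so that $\varphi_p(H)$ is a literal direct factor of $H$ (otherwise $a^2=a_p^{2^c}\in\varphi_p(H)$ need not imply $a^2\in H$); this is exactly the normal form of Lemma \ref{fact2} \ref{fact2-1}, so no new input is required, and the rest is the short computation sketched above.
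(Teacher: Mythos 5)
Your proof is correct and is essentially the paper's argument: both exhibit a single coset $Ha\subseteq J$ by choosing an index $p$ with $\{e\}\neq\varphi_p(H)=\langle a_p^{2^c}\rangle\subsetneq\langle a_p\rangle$ (guaranteed by $m>|L|$) and taking $a=a_p^{2^{c-1}}$, the ``half-way'' element whose square lies in $H$ but which itself does not. The only cosmetic differences are that the paper works at the index $i_m$ rather than $i_{|L|+1}$ and verifies $Ha\cap A'=\emptyset$ by noting $\varphi_{i_m}(A')=\langle a_{i_m}^{2^{e_{i_m}-1}}\rangle\leqslant\langle a'_{i_m}\rangle$ while the projection of the coset misses $\langle a'_{i_m}\rangle$, whereas you phrase the same coordinate computation as a $2$-adic valuation of the exponent.
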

	\begin{proof}
		Since $m>|L|$, we may assume
		\begin{align}
			H=\langle a_{i_1}\rangle\times\cdots\times\langle a_{i_{|L|}}\rangle\times \langle a'_{i_{|L|+1}}\rangle\times\cdots\times\langle a'_{i_m}\rangle\times\langle a'_{\lambda+1}\rangle\times\cdots\times\langle a'_{\lambda+\mu}\rangle,\nonumber
		\end{align}
		where $1\leq i_1<i_2<\cdots<i_m\leq\lambda$, $\{e\}\neq\langle a'_{h}\rangle\subsetneq\langle a_{h}\rangle$ with $i_{|L|+1}\leq h\leq i_m$ and $a'_{j}\in\langle a_{j}\rangle$ with $\lambda+1\leq j\leq\lambda+\mu$. Let $l$ be the minimal positive integer such that $a_{i_{m}}^{l}\in H$. Since $o(a_{i_m})=2^{e_{i_m}}$ and $\{e\}\neq \langle a'_{i_m}\rangle\subsetneq\langle a_{i_m}\rangle$, one gets $l=2^{e'_{i_m}}$ with $1\leq e'_{i_m}< e_{i_m}$. It follows that $2\leq l<2^{e_{i_m}}$ and $a_{i_{m}}^{l/2}\in A\setminus H$. Then $Ha_{i_m}^{l/2}=Ha_{i_m}^{-l/2}$.
		
		It suffices to show that $Ha_{i_m}^{l/2}\cap A'=\emptyset$. Since $Ha_{i_m}^{l/2}\cap H=\emptyset$ and $\varphi_{i}(Ha_{i_m}^{-l/2})=\varphi_{i}(H)$ for $1\leq i\leq \lambda+\mu$ with $i\neq i_{m}$, we have $\varphi_{i_m}(Ha_{i_m}^{l/2})\cap \varphi_{i_m}(H)=\varphi_{i_m}(Ha_{i_m}^{l/2})\cap \langle a'_{i_m}\rangle=\emptyset$. By the minimality of $l$, one gets $\langle a_{i_m}^{2^{e_{i_m}-1}}\rangle\leqslant \langle a'_{i_m}\rangle=\langle a_{i_m}^{l}\rangle$, and so $\varphi_{i_m}(Ha_{i_m}^{l/2})\cap \langle a_{i_m}^{2^{e_{i_m}-1}}\rangle=\emptyset$, which imply $\varphi_{i_m}(Ha_{i_m}^{l/2})\cap A'=\emptyset$ from \eqref{A'}. Then $Ha_{i_m}^{l/2}\cap A'=\emptyset$.
	\end{proof}
	
	A {\em right transversal} (resp. {\em left transversal}) of $H$ in $A$ is defined as a subset of $A$ which contains exactly one element in each right coset (resp. left coset) of $H$ in $A$. Since $A$ is abelian, every right coset of any subgroup is also a left coset of the subgroup, for the sake of simplicity, we use the term ``transversal'' to substitute for ``right transversal'' or ``left transversal''.
	
	\begin{lemma}\label{fact4}
		There exists a transversal $I$ of $H$ in $A$ containing $e$ such that $I\setminus J$ is inverse-closed and $I$ contains a square element in each coset of $H$ having nonempty intersection with ${\rm Sq}(G)$.
	\end{lemma}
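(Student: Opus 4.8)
The plan is to build the transversal $I$ one coset at a time, partitioning the cosets of $H$ in $A$ into four classes according to their behaviour under inversion and their intersection with $A'$, and making the representative choice in each class so that all three demands hold at once. For the coset $H$ itself I would take the representative $e$, which is simultaneously a square and self-inverse. For the cosets $Ha$ with $a\notin H$ and $Ha\neq Ha^{-1}$, which occur in pairs $\{Ha,Ha^{-1}\}$, I would pick any $x\in Ha$ and assign $x^{-1}$ to $Ha^{-1}$; I would moreover choose $x$ to be a square whenever $Ha\cap{\rm Sq}(G)\neq\emptyset$. This is legitimate because, by Fact \ref{jb2}, ${\rm Sq}(G)=B\cup\{b^2\}$ is inverse-closed, so $Ha$ meets ${\rm Sq}(G)$ if and only if $Ha^{-1}$ does, and if $x=y^{2}\in Ha$ then $x^{-1}=(y^{-1})^{2}\in Ha^{-1}$ is again a square; thus a single choice settles the square requirement on both cosets of the pair, and the pair contributes $\{x,x^{-1}\}$, an inverse-closed pair, to $I$.

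For a self-inverse coset $Ha=Ha^{-1}$ with $a\notin H$ and $Ha\cap A'\neq\emptyset$, I would choose $x\in Ha\cap A'$, invoking Lemma \ref{fact3} to ensure that $x$ can be taken in $Ha\cap A'\cap{\rm Sq}(G)$ whenever $Ha\cap{\rm Sq}(G)\neq\emptyset$. Since $a\notin H$, such an $x$ is a non-identity element of $A'$, hence an involution by Fact \ref{jb3}, so $x^{-1}=x$ and inverse-closedness is automatic here. Finally, for a self-inverse coset $Ha=Ha^{-1}$ with $a\notin H$ and $Ha\cap A'=\emptyset$ — exactly the cosets whose union is $J$ — I would simply pick any element of $Ha\cap{\rm Sq}(G)$ when this set is nonempty and any element of $Ha$ otherwise, imposing no inverse-closedness constraint since these representatives lie in $J$.

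Setting $I$ to be the set of all the chosen representatives, it is by construction a transversal of $H$ in $A$ that contains $e$ and contains a square in every coset meeting ${\rm Sq}(G)$. To verify that $I\setminus J$ is inverse-closed, I would check that $I\cap J$ is precisely the set of representatives produced in the last class: $e\notin J$ because $J$ is a union of cosets $Ha$ with $a\notin H$; no representative from the paired class lies in $J$ because those cosets are not self-inverse; and no representative from the third class lies in $J$ because those cosets meet $A'$. Hence $I\setminus J$ is the union of the representatives from the first three classes, each of which is either equal to its own inverse (classes one and three) or appears alongside its inverse inside $I\setminus J$ (the paired class), so $I\setminus J$ is inverse-closed.

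I do not expect a real obstacle here, only bookkeeping: confirming that the four classes of cosets are mutually exclusive and exhaustive, that the square-representative choices in the paired class are made consistently across $Ha$ and $Ha^{-1}$, and that $I\cap J$ is exactly the set of representatives of the fourth class. The one genuinely non-trivial input is Lemma \ref{fact3}, which is what allows us, on the self-inverse cosets meeting $A'$, to reconcile "the representative lies in $A'$" (which gives inverse-closedness for free) with "the representative is a square" (needed for the third requirement).
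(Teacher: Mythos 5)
Your proposal is correct and follows essentially the same route as the paper's proof: the same case split of the cosets by self-inverseness and intersection with $A'$, the same use of Lemma \ref{fact3} for the self-inverse cosets meeting both $A'$ and ${\rm Sq}(G)$, and the same appeal to Fact \ref{jb3} to see that the class-three representatives are involutions. Your explicit check that the square condition can be met consistently on a pair $\{Ha,Ha^{-1}\}$ (since ${\rm Sq}(G)$ is inverse-closed) is a small point the paper leaves implicit, but the argument is the same.
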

	\begin{proof}
		Let $I$ be a transversal of $H$ in $A$ containing $e$ such that $I$ contains a square element in each coset of $H$ having nonempty intersection with ${\rm Sq}(G)$. Note that $Ha\neq Ha^{-1}$, $Ha=Ha^{-1}$ and $Ha\cap A'\neq\emptyset$, or $Ha=Ha^{-1}$ and $Ha\cap A'=\emptyset$ for each $Ha\in A/H\setminus\{H\}$. For each $Ha\in A/H\setminus\{H\}$ satisfying $Ha\neq Ha^{-1}$, let $ha\in Ha$ and $h^{-1}a^{-1}\in Ha^{-1}$ belong to $I$ for some $h\in H$. For each $Ha\in A/H\setminus\{H\}$ satisfying $Ha=Ha^{-1}$ and $Ha\cap A'\neq\emptyset$, if $Ha\cap {\rm Sq}(G)=\emptyset$, then let $ha\in Ha\cap A'$ belong to $I$ for some $h\in H$; if $Ha\cap {\rm Sq}(G)\neq\emptyset$, from Lemma \ref{fact3}, then let $ha\in Ha\cap A'\cap {\rm Sq}(G)$ belong to $I$ for some $h\in H$. Since $J$ is the union of $Ha$ for $a\in A\setminus H$ satisfying $Ha=Ha^{-1}$ and $Ha\cap A'=\emptyset$, from Fact \ref{jb3}, $I\setminus J$ is inverse-closed.
	\end{proof}

	\subsection{$H$ is a $(0,\beta)$-regular set of $G$}
	
	In this subsection, by giving an appropriate connection set $S$, we determine each possibility for $\beta$ such that $H$ is a $(0,\beta)$-regular set of $G$.

	\begin{lemma}\label{lem7}
		The following hold:
		\begin{enumerate}
			\item\label{lem7-1} If $B\cup\{b^2\}\subseteq H$, then $\mathcal{L}(H)=|H|$;
			
			\item\label{lem7-4} If $B\leqslant H$ and $b^2\notin H$, then $\mathcal{L}(H)=|H|-1$;
			
			\item\label{lem7-3} If $B\nleqslant H$, $b^2\notin H\cup B$ and $Hb^2\cap B\neq\emptyset$, then $\mathcal{L}(H)=(2^{|L|}-1)|H|/2^{|L|}-1$;
			
			\item\label{lem7-2} If $B\nleqslant H$, and $b^2\in H\cup B$ or $Hb^2\cap B=\emptyset$, then $\mathcal{L}(H)=(2^{|L|}-1)|H|/2^{|L|}$.
			
		\end{enumerate}
	\end{lemma}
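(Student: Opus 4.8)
The plan is to translate $\mathcal{L}(H)$ into a coset‑maximization problem. By Fact~\ref{jb2}, ${\rm Sq}(G)=B\cup\{b^2\}$, and since every coset $Hx$ has size $|H|$,
\[
|{\rm Nsq}(G)\cap Hx|=|H|-|(B\cup\{b^2\})\cap Hx|,\qquad\text{hence}\qquad \mathcal{L}(H)=|H|-M,
\]
where $M=\max\{\,|(B\cup\{b^2\})\cap Hx|:x\in G\setminus H\,\}$. So the lemma reduces to showing $M=0,\,1,\,|H|/2^{|L|}+1,\,|H|/2^{|L|}$ in cases~\ref{lem7-1}--\ref{lem7-2} respectively; substituting into $\mathcal{L}(H)=|H|-M$ and using $|H|-|H|/2^{|L|}=(2^{|L|}-1)|H|/2^{|L|}$ then gives the four stated formulas. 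Throughout I would use $|H\cap B|=|H|/2^{|L|}$ from Lemma~\ref{fact2}\ref{fact2-1}.

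First I would discard the cosets contained in $Ab$: as ${\rm Sq}(G)\subseteq A$ while $Hab\subseteq Ab$ and $A\cap Ab=\emptyset$, every such coset contributes $0$, so $M$ is attained by some $x\in A\setminus H$. For $x\in A\setminus H$ I would record two elementary facts. (a) Since $B$ is a subgroup of $A$, $Hx\cap B$ is empty or a coset of $H\cap B$ in $B$, so $|Hx\cap B|\in\{0,\,|H|/2^{|L|}\}$, the larger value occurring exactly when $Hx\subseteq HB$; such a coset distinct from $H$ exists precisely when $B\nleqslant H$. (b) $b^2\in Hx$ iff $Hx=Hb^2$, which lies in $G\setminus H$ exactly when $b^2\notin H$. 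I would also note $|H|/2^{|L|}=|H\cap B|\geq1$ because $e\in H\cap B$.

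Then I would run the case analysis. In~\ref{lem7-1}, $B\cup\{b^2\}\subseteq H$ gives ${\rm Sq}(G)\subseteq H$, so $Hx\cap{\rm Sq}(G)=\emptyset$ for all $x\notin H$, i.e.\ $M=0$. In~\ref{lem7-4}, $B\leqslant H$ forces $Hx\cap B\subseteq Hx\cap H=\emptyset$, leaving $Hx\cap{\rm Sq}(G)=Hx\cap\{b^2\}$, so $M=1$, attained at $Hb^2$ (distinct from $H$ since $b^2\notin H$). In~\ref{lem7-3}, $b^2\notin B$ together with $Hb^2\cap B\neq\emptyset$ gives $|Hb^2\cap{\rm Sq}(G)|=|Hb^2\cap B|+1=|H|/2^{|L|}+1$, while every other coset $Hx$ in $A\setminus H$ has $b^2\notin Hx$, hence $|Hx\cap{\rm Sq}(G)|=|Hx\cap B|\leq|H|/2^{|L|}$; thus $M=|H|/2^{|L|}+1$. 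In~\ref{lem7-2} one checks in each of the three subcases that $Hb^2$ adds nothing beyond its $B$‑part: if $b^2\in H$ then $Hb^2=H$; if $b^2\in B$ then $\{b^2\}\subseteq B$; if $Hb^2\cap B=\emptyset$ then $|Hb^2\cap{\rm Sq}(G)|=1\leq|H|/2^{|L|}$. Hence $|Hx\cap{\rm Sq}(G)|\leq|H|/2^{|L|}$ for all $x\in A\setminus H$, and by (a) and $B\nleqslant H$ this bound is attained, so $M=|H|/2^{|L|}$.

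The only delicate point is case~\ref{lem7-2}: I must be sure the lone square $b^2$, when it sits alone in its coset, cannot beat a full $B$‑coset, which is exactly the inequality $|H|/2^{|L|}\geq1$ recorded above. Everything else is coset bookkeeping built on Fact~\ref{jb2} and Lemma~\ref{fact2}.
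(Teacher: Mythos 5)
Your proposal is correct and takes essentially the same route as the paper: both rest on Fact~\ref{jb2} and Lemma~\ref{fact2}\ref{fact2-1} to evaluate the square elements in each coset, observe that only the coset $Hb^2$ needs special treatment, and then split into the same four cases; your reformulation of $\mathcal{L}(H)$ as $|H|$ minus a maximum is just the complementary view of the paper's direct computation of the minimum of $|{\rm Nsq}(G)\cap Hx|$. The one point you flag as delicate (that the lone square $b^2$ cannot beat a full $B$-coset because $|H|/2^{|L|}\geq 1$) is exactly the comparison the paper makes implicitly in case~\ref{lem7-2}, and your handling of it is sound.
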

	\begin{proof}
		By Fact \ref{jb2}, one gets ${\rm Sq}(G)=B\cup\{b^2\}$. It follows that $Hx\subseteq {\rm Nsq}(G)$ for each $Hx\in G/H\setminus\{H\}$ with $Hx\cap(B\cup\{b^2\})=\emptyset$. For each $Hx\in G/H\setminus\{H\}$ with $Hx\cap B\neq\emptyset$ and $b^2\notin Hx$, there exists $c\in B$ such that $Hx=Hc$, which implies $|{\rm Nsq}(G)\cap Hx|=|{\rm Nsq}(G)\cap Hc|=|Hc\setminus (H\cap B)c|=(2^{|L|}-1)|H|/2^{|L|}$ from Lemma \ref{fact2} \ref{fact2-1}. To determine $\mathcal{L}(H)$, it suffices to consider the coset $Hb^2$.
		
		\ref{lem7-1} Since $B\cup\{b^2\}\subseteq H$, one gets $Hx\subseteq {\rm Nsq}(G)$ for each $x\in G\setminus H$, and so $\mathcal{L}(H)=|H|$.
		
		\ref{lem7-4} Since $B\leqslant H$, we have $Hx\cap B=\emptyset$ for each $Hx\in G/H\setminus\{H\}$. Since $B\leqslant H$ and $b^2\notin H$, one obtains ${\rm Nsq}(G)\cap Hb^2=Hb^2\setminus \{b^2\}$, and so $\mathcal{L}(H)=|H|-1$.
		
		\ref{lem7-3} Since $Hb^2\cap B\neq\emptyset$, one gets $Hb^2=Ha$ for some $a\in B$. Since $b^2\notin H\cup B$ and $Ba=B$, from Lemma \ref{fact2} \ref{fact2-1}, one gets $|{\rm Nsq}(G)\cap Hb^2|=|Ha\setminus(Ba\cup\{b^2\})|=|Ha\setminus (Ha\cap Ba)|-1=(2^{|L|}-1)|H|/2^{|L|}-1$. Then $\mathcal{L}(H)=(2^{|L|}-1)|H|/2^{|L|}-1$.
		
		\ref{lem7-2} If $b^2\in H$, then $\mathcal{L}(H)=(2^{|L|}-1)|H|/2^{|L|}$, and so \ref{lem7-2} is valid. Now suppose $b^2\notin H$. If $b^2\in B$, from Lemma \ref{fact2} \ref{fact2-1}, then $|{\rm Nsq}(G)\cap Hb^2|=|Hb^2\setminus B|=|Hb^2\setminus(H\cap B)b^2|=(2^{|L|}-1)|H|/2^{|L|}$, which implies that \ref{lem7-2} holds. If $b^2\notin B$, then $Hb^2\cap B=\emptyset$, and so $|{\rm Nsq}(G)\cap Hb^2|=|Hb^2\setminus \{b^2\}|=|H|-1$. Thus, \ref{lem7-2} holds.
	\end{proof}
	
	\begin{lemma}\label{lem13}
		Suppose that $I$ is a transversal of $H$ in $A$ containing $e$ such that $I$ contains a square element in each coset of $H$ having nonempty intersection with ${\rm Sq}(G)$. The following hold:
		\begin{enumerate}
			\item\label{lem13-1} If $B\cup\{b^2\}\subseteq H$, then $(I\setminus\{e\})x$ is square-free for each $x\in H$;
			
			\item\label{lem13-2} If $B\leqslant H$ and $b^2\notin H$, then $(I\setminus\{e\})x$ is square-free for each $x\in H\setminus B$;
			
			\item\label{lem13-3} If $B\nleqslant H$, $b^2\notin H\cup B$ and $Hb^2\cap B\neq\emptyset$, then $(I\setminus\{e\})x$ is square-free for each $x\in H\setminus (B\cup Bb^2)$;
			
			\item\label{lem13-4} If $B\nleqslant H$, and $b^2\in H\cup B$ or $Hb^2\cap B=\emptyset$, then $(I\setminus\{e\})x$ is square-free for each $x\in H\setminus B$.
		\end{enumerate}
	\end{lemma}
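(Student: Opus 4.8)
The plan is to reduce everything to the description of squares in $G$ given by Fact~\ref{jb2}, namely ${\rm Sq}(G)=B\cup\{b^2\}$, and then to analyse the cosets of $H$ in $A$ one representative at a time. Since $e\in I$, every $i\in I\setminus\{e\}$ represents a nontrivial coset $Hi$, and because $x\in H$ we have $ix\in Hix=Hi$; hence $(I\setminus\{e\})x$ is square-free if and only if $ix\notin B\cup\{b^2\}$ for each $i\in I\setminus\{e\}$. First I would split the nontrivial cosets into those disjoint from ${\rm Sq}(G)$ --- for which $ix$ is a non-square for every $x\in H$, so nothing needs checking --- and those meeting ${\rm Sq}(G)$, for which the hypothesis on $I$ forces the representative $i$ itself to be a square, so $i\in(B\cup\{b^2\})\setminus H$; thus $i\in B\setminus H$ or $i=b^2\notin H$.

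Next I would record the two elementary facts that drive every case. If $i\in B\setminus H$, then (as $B$ is a subgroup containing $i$) $ix\in B$ if and only if $x\in B$, i.e.\ if and only if $x\in H\cap B$ (and $|Hi\cap B|=|H|/2^{|L|}$ by Lemma~\ref{fact2}\,\ref{fact2-1}, which is the bridge to the counts in Lemma~\ref{lem7}); moreover $ix=b^2$ has at most the single solution $x=i^{-1}b^2\in Bb^2$, and this is pertinent only when $b^2\in Hi$. If instead $i=b^2\notin H$, then $b^2x=b^2$ exactly when $x=e$, while $b^2x\in B$ forces $x\in b^2B$. Since $A$ is abelian these coset products collapse cleanly --- e.g.\ $i^{-1}(H\cap B)b^2\subseteq Bb^2$ for $i\in B$ --- which is exactly what pins the ``bad'' values of $x$ into the small sets appearing in the statement.

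Finally I would run the four cases, in each one identifying precisely which nontrivial cosets meet ${\rm Sq}(G)$ and with which representatives. In (i), $B\cup\{b^2\}\subseteq H$ means no nontrivial coset meets ${\rm Sq}(G)$, so the conclusion holds for all $x\in H$. In (ii), $B\leqslant H$ leaves $Hb^2$ as the only offending coset, necessarily with representative $b^2$, and $b^2x$ is a square only for $x=e$, giving the claim on $H\setminus B$. In (iv), the extra hypothesis ($b^2\in H\cup B$ or $Hb^2\cap B=\emptyset$) guarantees that every relevant representative lies in $B$, or else equals $b^2$ with $Hb^2\cap B=\emptyset$; in all subcases $ix\in{\rm Sq}(G)$ forces $x\in B$, giving the claim on $H\setminus B$. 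In (iii), the cosets meeting ${\rm Sq}(G)$ are exactly those meeting $B$, one of which is $Hb^2$ by hypothesis, and the representative of $Hb^2$ may be $b^2$ or an element of $B\cap Hb^2$; checking both possibilities and combining the constraints ``$ix\in B$'' and ``$ix=b^2$'' confines $x$ to $B\cup Bb^2$, giving the claim on $H\setminus(B\cup Bb^2)$. I expect (iii) to be the crux: it is the only case in which $Hb^2$ overlaps $B$ without $B$ lying in $H$, so one must treat the two possible transversal representatives of $Hb^2$ uniformly and show the bad set is exactly $B\cup Bb^2$ --- neither larger nor smaller --- which is also what makes the matching bound in Lemma~\ref{lem7}\,\ref{lem7-3} sharp.
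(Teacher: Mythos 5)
Your proposal is correct and follows essentially the same route as the paper: reduce square-freeness to the description ${\rm Sq}(G)=B\cup\{b^2\}$ from Fact~\ref{jb2}, note that only cosets meeting ${\rm Sq}(G)$ matter and that the hypothesis on $I$ forces their representatives to lie in $(B\cup\{b^2\})\setminus H$, and then do the four-case analysis (the paper phrases your element-wise conditions ``$ix\in B$'' and ``$ix=b^2$'' as set containments such as $(B\cup Bb^2)x\subseteq A\setminus(B\cup Bb^2)$ for $x\notin B\cup Bb^2$, but the content is identical). Your identification of (iii) as the delicate case, with the two possible transversal representatives of $Hb^2$, matches the paper exactly.
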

	
	\begin{proof}
		Note that $I\setminus\{e\}=((I\setminus\{e\})\cap (\cup_{a\in B\cup \{b^2\}}Ha))\cup (I\setminus(\cup_{a\in B\cup \{b^2\}}Ha))$. By Fact \ref{jb2}, one gets ${\rm Sq}(G)=B\cup \{b^2\}\subseteq \cup_{a\in B\cup \{b^2\}}Ha$. Since $Hah=Ha$ for each $h\in H$ with $a\in A$, one gets $(I\setminus(\cup_{a\in B\cup \{b^2\}}Ha))h\subseteq A\setminus(B\cup\{b^2\})$, which implies that $(I\setminus(\cup_{a\in B\cup \{b^2\}}Ha))h$ is square-free. It suffices to show that $((I\setminus\{e\})\cap (\cup_{a\in B\cup \{b^2\}}Ha))x$ is square-free for each case.
		
		\ref{lem13-1} Since $B\cup\{b^2\}\subseteq H$, we get $\cup_{a\in B\cup \{b^2\}}Ha=H$, and so $(I\setminus\{e\})\cap (\cup_{a\in B\cup \{b^2\}}Ha)=(I\setminus\{e\})\cap H=\emptyset$, which imply $(I\setminus\{e\})x=(I\setminus(\cup_{a\in B\cup \{b^2\}}Ha))x$ for each $x\in H$. Thus, \ref{lem13-1} is valid.
		
		\ref{lem13-2} Since $b^2\notin H$, one gets $Hb^2\cap H=\emptyset$. Since $B\leqslant H$, we have $Hb^2\cap B=\emptyset$ and $\cup_{a\in B\cup \{b^2\}}Ha=H\cup Hb^2$. Since $I$ contains a square element in each coset of $H$ having nonempty intersection with ${\rm Sq}(G)$ and $Hb^2\cap B=\emptyset$, from Fact \ref{jb2}, we have $b^2\in I$, which implies $(I\setminus\{e\})\cap (\cup_{a\in B\cup \{b^2\}}Ha)=(I\setminus\{e\})\cap (H\cup Hb^2)=(I\setminus\{e\})\cap Hb^2=\{b^2\}$. Then $((I\setminus\{e\})\cap (\cup_{a\in B\cup \{b^2\}}Ha))x=\{b^2x\}\subseteq Hb^2\setminus (B\cup Bb^2)\subseteq A\setminus (B\cup Bb^2)$ for each $x\in H\setminus B$, and so $((I\setminus\{e\})\cap (\cup_{a\in B\cup \{b^2\}}Ha))x$ is square-free. Thus, \ref{lem13-2} holds.
		
		\ref{lem13-3} Note that $I$ contains a square element in each coset of $H$ having nonempty intersection with ${\rm Sq}(G)$. Since $B\nleqslant H$, $b^2\notin H\cup B$ and $Hb^2\cap B\neq\emptyset$, from Fact \ref{jb2}, one obtains $(I\setminus\{e\})\cap (\cup_{a\in B\cup \{b^2\}}Ha)\subseteq B\cup \{b^2\}\subseteq B\cup Bb^2$, which implies $((I\setminus\{e\})\cap (\cup_{a\in B\cup \{b^2\}}Ha))x\subseteq (B\cup Bb^2)x\subseteq A\setminus (B\cup Bb^2)$ for each $x\in H\setminus (B\cup Bb^2)$. Then $((I\setminus\{e\})\cap (\cup_{a\in B\cup \{b^2\}}Ha))x$ is square-free for each $x\in H\setminus (B\cup Bb^2)$. Thus, \ref{lem13-3} is valid.
		
		\ref{lem13-4} Note that $I$ contains a square element in each coset of $H$ having nonempty intersection with ${\rm Sq}(G)$. If $b^2\in H$, from Fact \ref{jb2}, then $(I\setminus\{e\})\cap (\cup_{a\in B\cup \{b^2\}}Ha)=(I\setminus\{e\})\cap (\cup_{a\in B}Ha)\subseteq B\setminus\{e\}$, and so $((I\setminus\{e\})\cap (\cup_{a\in B\cup \{b^2\}}Ha))x\subseteq A\setminus (H\cup B)$ for each $x\in H\setminus B$, which imply that $((I\setminus\{e\})\cap (\cup_{a\in B\cup \{b^2\}}Ha))x$ is square-free. If $b^2\in B$, then $B\cup\{b^2\}=B$, and so $((I\setminus\{e\})\cap (\cup_{a\in B\cup \{b^2\}}Ha))x\subseteq Bx\subseteq A\setminus B$ for each $x\in H\setminus B$ from Fact \ref{jb2}, which imply that $((I\setminus\{e\})\cap (\cup_{a\in B\cup \{b^2\}}Ha))x$ is square-free.
		
		Now suppose $b^2\notin H\cup B$. It follows that $Hb^2\cap B=\emptyset$. Since $I$ contains a square element in each coset of $H$ having nonempty intersection with ${\rm Sq}(G)$, one has $I\cap Hb^2=\{b^2\}$. For each $x\in H\setminus B$, since $Hb^2\cap B=\emptyset$, we obtain $(I\cap Hb^2)x=\{b^2x\}\subseteq Hb^2\setminus Bb^2\subseteq A\setminus (B\cup \{b^2\})$ and $((I\setminus\{e\})\cap (\cup_{a\in B}Ha))x\subseteq Bx\subseteq A\setminus (B\cup Bb^2)$, which imply that $((I\setminus\{e\})\cap (\cup_{a\in B\cup \{b^2\}}Ha))x\subseteq A\setminus (B\cup \{b^2\})$. Then $((I\setminus\{e\})\cap (\cup_{a\in B\cup \{b^2\}}Ha))x$ is square-free for each $x\in H\setminus B$. Thus, \ref{lem13-4} is valid.
	\end{proof}

	\begin{lemma}\label{lem8}	
		The subgroup $H$ is a $(0,\beta)$-regular set in ${\rm CayS}(G,S)$ if and only if $\beta=t|H|/2^{|L|}$ and one of the following holds:
		\begin{enumerate}
			\item\label{lem8-1} $0\leq t\leq 2^{|L|}$, $B\cup\{b^2\}\subseteq H$;
			
			\item\label{lem8-2} $0\leq t\leq 2^{|L|}-1$, $B\leqslant H$ and $b^2\notin H$;
			
			\item\label{lem8-4} $0\leq t\leq 2^{|L|}-2$, $B\nleqslant H$, $b^2\notin H\cup B$ and $Hb^2\cap B\neq\emptyset$;
			
			\item\label{lem8-3} $0\leq t\leq 2^{|L|}-1$, $B\nleqslant H$, and $b^2\in H\cup B$ or $Hb^2\cap B=\emptyset$.
		\end{enumerate}
	\end{lemma}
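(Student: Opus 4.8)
The plan is to establish both directions of the equivalence; throughout write $q=|H|/2^{|L|}$, which equals $|H\cap B|\geq 1$ by Lemma \ref{fact2}.

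\textbf{Necessity.} Suppose $H$ is a $(0,\beta)$-regular set in ${\rm CayS}(G,S)$ for some normal square-free $S$. By Lemma \ref{lem3}\,\ref{lem3-1}, $|S\cap H|=0$ and $\beta=|S\cap Hx|$ for every $x\in G\setminus H$. Taking $x=b$ and applying Lemma \ref{f3} (here $b\in Ab$ and $e\in B$, so the relevant subsets $L',T'$ are empty), $\beta=|S\cap Hb|=tq$, where $t$ is the number of conjugacy classes of the form $Ba_{L''}b$ with $L''\subseteq L$ contained in $S$; by Lemma \ref{ff2} there are exactly $2^{|L|}$ such classes, so $0\leq t\leq 2^{|L|}$. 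Next, Lemma \ref{lem3}\,\ref{lem3-2} gives $\beta\leq\mathcal{L}(H)$. The four situations in \ref{lem8-1}--\ref{lem8-3} are mutually exclusive and jointly exhaustive (if $B\leqslant H$ then $b^2\in H$ or not, giving \ref{lem8-1} or \ref{lem8-2}; if $B\nleqslant H$ then either $b^2\notin H\cup B$ and $Hb^2\cap B\neq\emptyset$, which is \ref{lem8-4}, or its negation, which is \ref{lem8-3}), and in each of them Lemma \ref{lem7} evaluates $\mathcal{L}(H)$ to be $|H|$, $|H|-1$, $(2^{|L|}-1)q-1$, $(2^{|L|}-1)q$ respectively. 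Substituting $\beta=tq$ and using $q\geq 1$ then forces $t\leq 2^{|L|}$, $t\leq 2^{|L|}-1$, $t\leq 2^{|L|}-2$, $t\leq 2^{|L|}-1$ respectively, which is exactly the claimed bound in each case.

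\textbf{Sufficiency.} Conversely, fix $\beta=tq$ with $t$ in the prescribed range for whichever situation holds; reading the inequalities above backwards gives $\beta\leq\mathcal{L}(H)$. I will construct $S=S_1\sqcup S_2$ with $S_1\subseteq Ab$ and $S_2\subseteq A\setminus H$. For $S_1$, fix any family $\mathcal F$ of $t$ subsets of $L$ and put $S_1=\bigcup_{L''\in\mathcal F,\,T'\subseteq T}Ba_{L''}a_{T'}b$: this is a union of conjugacy classes by Lemma \ref{lem2}, hence normal; it is square-free since ${\rm Sq}(G)=B\cup\{b^2\}\subseteq A$ is disjoint from $Ab$ (Fact \ref{jb2}); and $|S_1\cap Hab|=tq=\beta$ for every $ab\in Ab$ while $|S_1\cap Ha|=0$ for $a\in A$, by Lemma \ref{f3}. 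For $S_2$, take a transversal $I$ of $H$ in $A$ as in Lemma \ref{fact4}, so $e\in I$, $(I\setminus J)\setminus\{e\}$ is inverse-closed, and $I$ contains a square element in every coset meeting ${\rm Sq}(G)$. Pick an inverse-closed subset $X\subseteq H$ with $|X|=\beta$ lying inside the part of $H$ furnished by Lemma \ref{lem13} for the case at hand (namely $H$, $H\setminus B$, $H\setminus(B\cup Bb^2)$, or $H\setminus B$); such an $X$ exists because the range of $t$ makes this part have at least $\beta$ elements, and because when $\beta$ is odd one checks using Lemma \ref{fact2} that $m=|L|$ and $q$ is odd, which forces $r=|L|$, so the $2^{|L|}$ self-inverse elements $a_{L'}$ ($L'\subseteq L$) of $H$ leave at least one outside the forbidden cosets $B$ (and $Bb^2$). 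Finally set
\[
S_2=\Big(\bigcup_{x\in X}\big((I\setminus J)\setminus\{e\}\big)x\Big)\cup S_2'',
\]
where $S_2''=\emptyset$ if $J=\emptyset$, and otherwise $S_2''$ contains, for each self-inverse involution-free coset $Ha\subseteq J$, an inverse-closed square-free subset of $Ha$ of cardinality $\beta$; this exists since $J\neq\emptyset$ forces $m>|L|$, hence $\beta$ even by Lemma \ref{fact2}, and ${\rm Nsq}(G)\cap Ha$ is inverse-closed, free of involutions (as $Ha\cap A'=\emptyset$, Fact \ref{jb3}) and of size $\geq\mathcal{L}(H)\geq\beta$. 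Now $S_2$ is inverse-closed (each summand is), hence normal by Corollary \ref{cor1}; it is square-free by Lemma \ref{lem13} for the first part and by construction for $S_2''$; and thus $S=S_1\sqcup S_2$ is normal and square-free. Counting adjacencies: $|S\cap H|=0$; for $x\in Ab$, $|S\cap Hx|=|S_1\cap Hx|=\beta$; for $x\in A\setminus H$ with $Hx\nsubseteq J$ the coset $Hx$ meets $(I\setminus J)\setminus\{e\}$ exactly in its unique representative, so $|S\cap Hx|=|S_2\cap Hx|=|X|=\beta$; and for $Hx\subseteq J$, $|S\cap Hx|=|S_2''\cap Hx|=\beta$. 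Hence $H$ is a $(0,\beta)$-regular set in ${\rm CayS}(G,S)$ by Lemma \ref{lem3}\,\ref{lem3-1}.

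\textbf{Main obstacle.} The delicate part is constructing $S_2$: one must simultaneously make it square-free --- which is precisely what Lemma \ref{lem13} buys, at the cost of confining the shift set $X$ to a restricted subset of $H$ --- inverse-closed (equivalently normal, by Corollary \ref{cor1}), and of cardinality exactly $\beta$ in every non-trivial coset of $H$ inside $A$. The self-inverse cosets are the source of trouble: those meeting $A'$ are handled because Lemma \ref{fact4} chooses an involution (or $b^2$) as representative, while those disjoint from $A'$, which form $J$, must be treated separately and only occur when $m>|L|$, which by Lemma \ref{fact2} forces $\beta$ even --- exactly the parity needed to select an inverse-closed subset of the right size. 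Coordinating these parity constraints with the range of $t$ (equivalently, with $\beta\leq\mathcal{L}(H)$) is the heart of the verification.
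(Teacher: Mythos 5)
Your proof is correct and follows essentially the same route as the paper's: necessity via Lemma \ref{lem3}, Lemma \ref{f3} and Lemma \ref{lem7}, and sufficiency by combining a union of conjugacy classes $Ba_{L''}a_{T'}b$ in $Ab$ with transversal-shifts $((I\setminus J)\setminus\{e\})x$ on the cosets inside $A$ and paired non-squares on the cosets in $J$. The only substantive difference is that you shift by a general inverse-closed set $X\subseteq H$ of size exactly $\beta$ (which forces your extra parity/existence argument for odd $\beta$ --- correctly carried out via $q$ odd $\Rightarrow m=r=|L|$), whereas the paper shifts by $t$ of the inverse-closed sets $H\cap Ba_{L'}$ with $\emptyset\neq L'\subseteq L$ (excluding $a_{L'}\in Bb^2$ in case \ref{lem8-4}), which makes the cardinality $t|H|/2^{|L|}$ and the square-freeness automatic.
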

	\begin{proof}
		
		We first prove the necessity. Pick an element $a\in A$. In view of Lemma \ref{f2}, one gets $a\in Ba_{L'}a_{T'}$ for some $L'\subseteq L$ and $T'\subseteq T$. Since $S$ is normal, from Lemma \ref{lem2}, we may assume that $S$ contains exactly $t$ sets of type $Ba_{L''}a_{T'}b$ with $L''\subseteq L$. In view of Lemmas \ref{lem3} and \ref{f3}, we get $\beta=|S\cap Hab|=t|H|/2^{|L|}$ with $0\leq t\leq \frac{\mathcal{L}(H)\cdot2^{|L|}}{|H|}$.
		
		If $B\cup\{b^2\}\subseteq H$, from Lemma \ref{lem7} \ref{lem7-1}, then $\mathcal{L}(H)=|H|$, and so $0\leq t\leq 2^{|L|}$, which imply that \ref{lem8-1} is valid. If $B\leqslant H$ and $b^2\notin H$, from Lemma \ref{lem7} \ref{lem7-4}, one gets $\mathcal{L}(H)=|H|-1$, and so $0\leq t\leq 2^{|L|}-1$, which imply that \ref{lem8-2} holds. If $B\nleqslant H$, $b^2\notin H\cup B$ and $Hb^2\cap B\neq\emptyset$, from Lemma \ref{lem7} \ref{lem7-3}, then $\mathcal{L}(H)=(2^{|L|}-1)|H|/2^{|L|}-1$, and so $0\leq t\leq 2^{|L|}-2$, which imply that \ref{lem8-4} is valid. If $B\nleqslant H$, and $b^2\in H\cup B$ or $Hb^2\cap B\neq\emptyset$, from Lemma \ref{lem7} \ref{lem7-2}, then $\mathcal{L}(H)=(2^{|L|}-1)|H|/2^{|L|}$, and so $0\leq t\leq 2^{|L|}-1$, which imply that \ref{lem8-3} holds.

		Next we prove the sufficiency. If $t=2^{|L|}$, then $\beta=|H|$, and so $S=G\setminus H$. Now we consider $0\leq t\leq 2^{|L|}-1$. By Lemma \ref{fact4}, let $I$ be a transversal of $H$ in $A$ containing $e$ such that $I\setminus J$ is inverse-closed and $I$ contains a square element in each coset of $H$ having nonempty intersection with ${\rm Sq}(G)$.
		
		In view of Lemma \ref{ff2}, there are $2^{|L|}-1$ distinct $a_{L'}$ for $\emptyset\neq L'\subseteq L\subseteq H$. By \eqref{A'B}, $Ba_K$ and $Ba_{K'}$ are pairwise disjoint for distinct subsets $K$ and $K'$ of $\{a_1,\ldots,a_{\lambda}\}$. If $b^2\notin H\cup B$ and $Hb^2\cap B\neq\emptyset$, then $H\cap Bb^2\neq\emptyset$, and there are $2^{|L|}-2$ distinct $a_{L'}$ for $\emptyset\neq L'\subseteq L$ and $a_{L'}\notin Bb^2$ from Lemma \ref{f2}. Since $0\leq t|H|/2^{|L|}\leq \mathcal{L}(H)$ from Lemma \ref{lem7}, each coset $Hx$ with $x\in G\setminus H$ has at least $t|H|/2^{|L|}$ non-square elements. Since $a_{L'}\in H$ with $L'\subseteq L$, from Lemma \ref{fact2} \ref{fact2-1}, one has $|H\cap Ba_{L'}|=|H|/2^{|L|}$.
		
		If $B\nleqslant H$, $b^2\notin H\cup B$ and $Hb^2\cap B\neq\emptyset$, then let $S$ be a union of $t|H|/2^{|L|+1}$ sets of type $\{ha',h^{-1}a'^{-1}\}$ with $ha'\in{\rm Nsq}(G)$ and $h\in H$ for each $Ha'\subseteq J$, $t$ sets of type $\cup_{x\in H\cap Ba_{L'}}(I\setminus (J\cup\{e\}))x$ and $t$ sets of type $\cup_{T'\subseteq T}Ba_{L'}a_{T'}b$ with $\emptyset\neq L'\subseteq L$ and $a_{L'}\notin Bb^2$. If $B\leqslant H$, or $b^2\in H\cup B$, or $Hb^2\cap B=\emptyset$, then let $S$ be a union of $t|H|/2^{|L|+1}$ sets of type $\{ha',h^{-1}a'^{-1}\}$ with $ha'\in{\rm Nsq}(G)$ and $h\in H$ for each $Ha'\subseteq J$, $t$ sets of type $\cup_{x\in H\cap Ba_{L'}}(I\setminus (J\cup\{e\}))x$ and $t$ sets of type $\cup_{T'\subseteq T}Ba_{L'}a_{T'}b$ with $\emptyset\neq L'\subseteq L$.
		
		
		We now claim that $H$ is a $(0,t|H|/2^{|L|})$-regular set in ${\rm CayS}(G,S)$. The proof proceeds in the following steps.
		\begin{step}\label{step-1}
			$S$ is normal and square-free.
		\end{step}
		In view of Fact \ref{jb2} and Lemma \ref{lem2}, $\cup_{T'\subseteq T}Ba_{L'}a_{T'}b$ with $\emptyset\neq L'\subseteq L$ is normal and square-free. By Lemma \ref{lem13}, $S$ is square-free.
		
		Let $L'$ be a nonempty subset of $L$ and $d\in H\cap Ba_{L'}$. Since $d\in Ba_{L'}$, there exists $c\in B$ such that $d=ca_{L'}$. By \eqref{A'B}, we have $a_{L'}^2\in B$, and so $d^{-1}=c^{-1}a_{L'}^{-1}\in H\cap Ba_{L'}^{-1}=H\cap Ba_{L'}$. Since $L'$ and $d$ were arbitrary, $H\cap Ba_{L'}$ is inverse-closed for each nonempty subset of $L$.
		
		
		
		Since $I\setminus (J\cup\{e\})$ is inverse-closed, $\cup_{x\in H\cap Ba_{L'}}(I\setminus (J\cup\{e\}))x$ for each nonempty subset $L'$ of $L$, and $\{ha',h^{-1}a'^{-1}\}$ with $ha'\in{\rm Nsq}(G)$ and $h\in H$ for each $Ha'\subseteq J$ are normal by Corollary \ref{cor1}, which imply that $S$ is normal.

		Thus, Step \ref{step-1} holds.
		\begin{step}\label{step-2}
			$|S\cap Hab|=t|H|/2^{|L|}$ for each $a\in A$.
		\end{step}
		Note that $S$ contains $t$ sets of type $Ba_{L'}a_{T'}b$ with $\emptyset\neq L'\subseteq L$ for each $T'\subseteq T$. For each $a\in A$, from Lemma \ref{f2}, one gets $a\in Ba_{L''}a_{T''}$ for some $L''\subseteq L$ and $T''\subseteq T$. By Lemma \ref{f3}, we get $|S\cap Hab|=t|H|/2^{|L|}$ for each $a\in A$.
		
		Thus, Step \ref{step-2} is valid.
		\begin{step}\label{step-3}
			$|S\cap Ha|=t|H|/2^{|L|}$ for each $a\in A\setminus H$.
		\end{step}
		
		Note that $|I\cap Ha|=1$ for each $a\in A\setminus H$. By Lemma \ref{fact2} \ref{fact2-1}, we have $|(\cup_{x\in H\cap Ba_{L'}}(I\setminus (J\cup\{e\}))x)\cap Ha|=|H\cap Ba_{L'}|=|H|/2^{|L|}$ with $\emptyset\neq L'\subseteq L\subseteq H$ for each $Ha$ satisfying $Ha\cap J\cap H=\emptyset$. Since $S$ contains $t$ sets of type $\cup_{x\in H\cap Ba_{L'}}(I\setminus (J\cup\{e\}))x$ with $\emptyset\neq L'\subseteq L$, from Lemma \ref{f2}, we get $|S\cap Ha|=t|H|/2^{|L|}$ for each $Ha$ satisfying $Ha\cap J\cap H=\emptyset$. If $J=\emptyset$, then the desired result follows. Now suppose $J\neq\emptyset$. It follows from Lemma \ref{fact1} that $m>|L|$. By Lemma \ref{fact2} \ref{fact2-2}, $|H|/2^{|L|}$ is even. Since $S$ contains $t|H|/2^{|L|+1}$ sets of type $\{ha',h^{-1}a'^{-1}\}$ with $ha'\in{\rm Nsq}(G)$ and $h\in H$ for each $Ha'\subseteq J$, one obtains $|S\cap Ha'|=t|H|/2^{|L|}$.
		
		Thus, Step \ref{step-3} holds. In view of Steps \ref{step-1}--\ref{step-3} and Lemma \ref{lem3} \ref{lem3-1}, our claim is valid.
	\end{proof}

	\subsection{$\langle H,zb\rangle$ is a $(0,\beta)$-regular set of $G$}
	In this subsection, we always assume that $\langle H,zb\rangle$ is a subgroup of $G$ such that $H$ has index $2$ of $\langle H,zb\rangle$.
	
	In the following lemma, by giving an appropriate connection set $S$ with $S\cap A=\emptyset$, we determine each possibility for $(\alpha,\beta)$ such that $\langle H,zb\rangle$ is an $(\alpha,\beta)$-regular set of $G$.
	
	\begin{lemma}\label{lem9}
		Let $S$ be a normal and square-free subset of $G$ with $S\cap A=\emptyset$. Then $\langle H,zb\rangle$ is an $(\alpha,\beta)$-regular set in ${\rm CayS}(G,S)$ if and only if one of the following holds:
		\begin{enumerate}
			\item\label{lem9-1} $B\leqslant H$, $\alpha=t'|H|/2^{|L|}$ and $\beta=t|H|/2^{|L|}$;
			
			\item\label{lem9-2} $B\nleqslant H$ and $\alpha=\beta=t|H|/2^{|L|}$.
		\end{enumerate}
		Here, $0\leq t,t'\leq 2^{|L|}$.
	\end{lemma}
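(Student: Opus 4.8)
The plan is to invoke Lemma \ref{lem3}\ref{lem3-1} for $K:=\langle H,zb\rangle=H\cup Hzb$ and to read off $\alpha,\beta$ from the coset geometry of $K$ inside $G=A\cup Ab$. Since $bab^{-1}=a^{-1}$ gives $ba=a^{-1}b$, for every $a\in A$ we have $Ka=Ha\cup Hzba=Ha\cup Hza^{-1}b$, with $Ha\subseteq A$ and $Hza^{-1}b\subseteq Ab$; moreover, for $a,a'\in A$ one has $Ka=Ka'$ iff $a'a^{-1}\in K\cap A=H$, i.e.\ iff $Ha=Ha'$, so the cosets of $K$ in $G$ are exactly the sets $Ka$ with $a$ running over a transversal of $H$ in $A$. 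Because $S\cap A=\emptyset$ by hypothesis, $|S\cap Ka|=|S\cap Hza^{-1}b|$. Hence, by Lemma \ref{lem3}\ref{lem3-1}, $K$ is an $(\alpha,\beta)$-regular set in ${\rm CayS}(G,S)$ if and only if $\alpha=|S\cap Hzb|$ and $\beta=|S\cap Hza^{-1}b|$ for every $a\in A\setminus H$.

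Next I would analyse $S$ and the relevant counts. Since $S\subseteq Ab$ is normal, Lemma \ref{lem2}\ref{lem2-3} shows $S$ is a disjoint union of conjugacy classes $Bab$ ($a\in A$); conversely any such union is normal, and by Fact \ref{jb2} it is automatically square-free and disjoint from $A$. For $u\in A$, write the $B$-coset of $u$ as $Ba_{L'}a_{T'}$ (legitimate by Lemma \ref{f2}); then Lemma \ref{f3} gives $|S\cap Hub|=t_{T'}|H|/2^{|L|}$, where $t_{T'}$ is the number of $L''\subseteq L$ with $Ba_{L''}a_{T'}b\subseteq S$ — in particular $|S\cap Hub|$ depends on $u$ only through $T'$. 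Also, by Lemma \ref{f2}, $H=\sqcup_{L'\subseteq L}(H\cap B)a_{L'}\subseteq\sqcup_{L'\subseteq L}Ba_{L'}$, so every element of $H$ has trivial $T$-part, and therefore the whole coset $Hz$ lies in $\sqcup_{L'\subseteq L}Ba_{L'}a_{T_0}$, where $T_0$ denotes the $T$-part of the $B$-coset of $z$.

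For the necessity I would split on whether $B\leqslant H$. If $B\nleqslant H$, choose $a\in B\setminus H$: then $Ha\neq H$ and $za^{-1}\in Bz$, so $\beta=|S\cap Hza^{-1}b|=t_{T_0}|H|/2^{|L|}=\alpha$; and letting the $B$-coset of $a$ run through representatives with all possible $T$-parts (each such $a$ with nontrivial $T$-part automatically lies outside $H$, and the $T$-part of $za^{-1}$ is then the symmetric difference of $T_0$ with that of $a$, so it covers every subset of $T$) forces $t_{T'}$ to equal a common value $t$ for all $T'$; hence $\alpha=\beta=t|H|/2^{|L|}$ with $0\le t\le 2^{|L|}$, which is case \ref{lem9-2}. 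If $B\leqslant H$, then every $a\in A\setminus H$ has nontrivial $T$-part, so $za^{-1}$ has $T$-part $\neq T_0$; the $\beta$-condition then only forces $t_{T'}$ to be a common value $t$ over $T'\neq T_0$, giving $\beta=t|H|/2^{|L|}$, while $\alpha=t_{T_0}|H|/2^{|L|}=:t'|H|/2^{|L|}$ is otherwise unconstrained; this is case \ref{lem9-1} (and when $\lambda=|L|$ one has $H=A$, $K=G$, and the $\beta$-condition is vacuous, which is consistent with the stated range $0\le t\le 2^{|L|}$).

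For the sufficiency I would exhibit $S$ directly. Fixing collections $\mathcal{L}',\mathcal{L}$ of $t'$ and $t$ subsets of $L$, put $S=(\bigcup_{L''\in\mathcal{L}'}Ba_{L''}a_{T_0}b)\cup(\bigcup_{T'\subseteq T,\,T'\neq T_0}\bigcup_{L''\in\mathcal{L}}Ba_{L''}a_{T'}b)$ in case \ref{lem9-1}, and $S=\bigcup_{T'\subseteq T}\bigcup_{L''\in\mathcal{L}}Ba_{L''}a_{T'}b$ in case \ref{lem9-2}. Each is a union of conjugacy classes $Bab$, hence normal, square-free and disjoint from $A$; by Lemma \ref{ff2} the constituent classes are pairwise distinct, so $t_{T_0}=t'$ and $t_{T'}=t$ for $T'\neq T_0$ (respectively $t_{T'}=t$ for all $T'$). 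Feeding this into the criterion of the first paragraph together with Lemma \ref{f3} gives $\alpha=|S\cap Hzb|=t'|H|/2^{|L|}$ and $\beta=|S\cap Hza^{-1}b|=t|H|/2^{|L|}$ for all $a\in A\setminus H$, as required. The part needing the most care — and the main obstacle — is the bookkeeping between $B$-cosets (parametrized by pairs $(L',T')$) and $H$-cosets: precisely identifying which cosets $Hza^{-1}b$ arise as $a$ ranges over $A\setminus H$, making sure Lemma \ref{f3} is applicable to the $S$ constructed, and handling the degenerate case $\lambda=|L|$ (equivalently $K=G$) where $\beta$ is vacuous.
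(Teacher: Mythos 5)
Your proposal is correct and follows essentially the same route as the paper: reduce via $S\cap A=\emptyset$ to counting $|S\cap Hza^{-1}b|$, decompose $S$ into conjugacy classes $Ba_{L''}a_{T''}b$ and apply Lemma \ref{f3}, use an element of $B\setminus H$ to force $\alpha=\beta$ when $B\nleqslant H$, and build $S$ from the same unions of classes for sufficiency. Your write-up is in fact somewhat more explicit than the paper's on why a single value $t$ is forced across all $T$-parts (the symmetric-difference argument) and on the degenerate case $T=\emptyset$.
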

	\begin{proof}
		Since $H$ has index $2$ of $\langle H,zb\rangle$, we have $\langle H,zb\rangle=H\cup Hzb$. Since $S\cap A=\emptyset$, we have
		\begin{align}\label{beta}
			|S\cap \langle H,zb\rangle a|=|S\cap (H\cup Hzb)a|=|S\cap Hzba|=|S\cap Hza^{-1}b|
		\end{align}
		for each $a\in A$. Since $z\in A$, from Lemma \ref{f2}, one gets $z\in Ba_{L'}a_{T'}$ for some $L'\subseteq L$ and $T'\subseteq T$.
		
		We first prove the necessity. Let $a\in A\setminus H$. Since $H$ has index $2$ of $\langle H,zb\rangle$, we have $\langle H,zb\rangle a\neq \langle H,zb\rangle$. In view of Lemma \ref{f2}, one gets $za^{-1}\in Ba_{L''}a_{T''}$ for some $L''\subseteq L$ and $T''\subseteq T$. Since $S$ is normal, from Lemma \ref{lem2}, we may assume that $S$ contains exactly $t$ sets of type $Ba_{L_0}a_{T''}b$ with $L_0\subseteq L$. By Lemma \ref{lem3} \ref{lem3-1}, Lemma \ref{f3} and \eqref{beta}, one has $\beta=|S\cap \langle H,zb\rangle a|= |S\cap Hza^{-1}b|=t|H|/2^{|L|}$ with $0\leq t\leq 2^{|L|}$.

		Suppose $B\leqslant H$. Note that $z\in Ba_{L'}a_{T'}$. Since $S$ is normal, from Lemma \ref{lem2}, we may assume that $S$ contains exactly $t'$ sets of type $Ba_{L''}a_{T'}b$ with $L''\subseteq L$. By Lemma \ref{lem3} \ref{lem3-1}, Lemma \ref{f3} and \eqref{beta}, we get $\alpha=|S\cap \langle H,zb\rangle|= |S\cap Hzb|=t'|H|/2^{|L|}$ with $0\leq t'\leq 2^{|L|}$. Thus, \ref{lem9-1} is valid.

		Suppose $B\nleqslant H$. It follows that $B\setminus H\neq\emptyset$. Since $z\in Ba_{L'}a_{T'}$, we have $zc^{-1}\in Ba_{L'}a_{T'}$ with $c\in B\setminus H$. By Lemma \ref{f3}, we have $|S\cap Hzb|=|S\cap Hzc^{-1}b|$. In view of Lemma \ref{lem3} \ref{lem3-1} and \eqref{beta}, one gets $\alpha=|S\cap \langle H,zb\rangle|=|S\cap Hzb|=|S\cap Hzc^{-1}b|=|S\cap \langle H,zb\rangle c|=\beta$. Thus, \ref{lem9-2} holds.
		
		We next prove the sufficiency. In view of Fact \ref{jb2} and Lemma \ref{lem2}, $Ba_{L''}a_{T''}b$ is normal and square-free for each $L''\subseteq L$ and $T''\subseteq T$.
		
		Suppose $B\leqslant H$. It follows from Lemma \ref{f2} that $H=\sqcup_{L''\subseteq L}Ba_{L''}$. Since $z\in Ba_{L'}a_{T'}\subseteq Ha_{L'}a_{T'}=Ha_{T'}$, one gets $Hz=Ha_{T'}=\cup_{L''\subseteq L}Ba_{L''}a_{T'}$, which implies $(\cup_{L''\subseteq L}Ba_{L''}a_{T'})\cap (A\setminus Hz)=\emptyset$. Let $S$ be a union of $t'$ sets of type $Ba_{L''}a_{T'}b$ with $L''\subseteq L$, and $t$ sets of type $Ba_{L''}a_{T''}b$ with $L''\subseteq L$ for each $T''\subseteq T$ with $(\cup_{L''\subseteq L}Ba_{L''}a_{T''})\cap (A\setminus Hz)\neq\emptyset$. Since $z\in Ba_{L'}a_{T'}$, from \eqref{beta} and Lemma \ref{f3}, we obtain $|S\cap \langle H,zb\rangle|=|S\cap Hzb|=t'|H|/2^{|L|}$. For each $x\in A\setminus H$, since $zx^{-1}\in A$, from Lemma \ref{f2}, one gets $zx^{-1}\in Ba_{L''}a_{T''}$ for some $L''\subseteq L$ and $T''\subseteq T$, which implies $zx^{-1}\in Ba_{L''}a_{T''}\cap (A\setminus Hz)$. In view of \eqref{beta} and Lemma \ref{f3}, one gets $|S\cap \langle H,zb\rangle x|=|S\cap Hzx^{-1}b|=t|H|/2^{|L|}$ for each $x\in A\setminus H$. By Lemma \ref{lem3} \ref{lem3-1}, $\langle H,zb\rangle$ is a $(t'|H|/2^{|L|},t|H|/2^{|L|})$-regular set in ${\rm CayS}(G,S)$ with $0\leq t,t'\leq 2^{|L|}$.

		Suppose $B\nleqslant H$. Let $S$ be a union of $t$ sets of type $\cup_{T''\subseteq T}Ba_{L''}a_{T''}b$ with $L''\subseteq L$. For each $za^{-1}\in A$, from Lemma \ref{f2}, one gets $za^{-1}\in Ba_{L''}a_{T''}$ for some $L''\subseteq L$ and $T''\subseteq T$. By \eqref{beta} and Lemma \ref{f3}, one gets $|S\cap \langle H,zb\rangle a|=|S\cap Hza^{-1}b|=t|H|/2^{|L|}$ for each $a\in A$. In view of Lemma \ref{lem3} \ref{lem3-1}, $\langle H,zb\rangle$ is a $(t|H|/2^{|L|},t|H|/2^{|L|})$-regular set in ${\rm CayS}(G,S)$ with $0\leq t\leq 2^{|L|}$.
	\end{proof}

	In the following lemma, by giving an appropriate connection set $S$ with $S\subseteq A$, we determine each possibility for $\beta$ such that $\langle H,zb\rangle$ is a $(0,\beta)$-regular set of $G$.
	
	Since $b^2=(zb)^2$, one gets $b^2\in H$.

	\begin{lemma}\label{lem18}
		Let $S$ be a normal and square-free subset of $G$ with $S\subseteq A$. Then $\langle H,zb\rangle$ is a $(0,\beta)$-regular set in ${\rm CayS}(G,S)$ if and only if one of the following holds:
		\begin{enumerate}
			\item\label{lem18-1} $B\leqslant H$ and $0\leq\beta\leq |H|$;
			
			\item\label{lem18-2} $B\nleqslant H$ and $0\leq\beta\leq (2^{|L|}-1)|H|/2^{|L|}$.
		\end{enumerate}
		Moreover, if $m>|L|$, then $2\mid\beta$.
	\end{lemma}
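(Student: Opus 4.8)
The plan is to reduce the statement to a concrete condition on $S$ and then argue both directions. Write $K=\langle H,zb\rangle$; since $(zb)^2=b^2$ we have $b^2\in H$, so $K=H\cup Hzb$ and $K\cap A=H$. As $A$ is normal of index $2$ in $G$ and $zb\notin A$, one gets $KA=G$, hence every coset of $K$ in $G$ meets $A$, and for $a,a'\in A$ one has $Ka=Ka'$ iff $Ha=Ha'$; thus the cosets of $K$ correspond bijectively to those of $H$ in $A$, with $Kx=Hx\sqcup Hzx^{-1}b$ for $x\in A$ (using $bx=x^{-1}b$) and $Hzx^{-1}b\subseteq Ab$. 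Since $S\subseteq A$, this gives $|S\cap K|=|S\cap H|$ and $|S\cap Kx|=|S\cap Hx|$ for $x\in A$, so by Lemma \ref{lem3} \ref{lem3-1} the first thing I would record is: $K$ is a $(0,\beta)$-regular set in ${\rm CayS}(G,S)$ if and only if $S\cap H=\emptyset$ and $|S\cap Ha|=\beta$ for every $a\in A\setminus H$; moreover, since ${\rm Sq}(G)=B\cup\{b^2\}$ (Fact \ref{jb2}) and $b^2\in H$, for such an $S$ the square-free condition is equivalent to $S\cap B=\emptyset$.

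For necessity and the parity clause: given a valid $S$, for each $a\in A\setminus H$ we have $b^2\notin Ha$, so ${\rm Nsq}(G)\cap Ha=Ha\setminus B$ and hence $\beta\le|H|-|Ha\cap B|$. If $B\leqslant H$ this is $\beta\le|H|$; if $B\nleqslant H$, choosing $c\in B\setminus H$ and using $|Hc\cap B|=|H\cap B|=|H|/2^{|L|}$ (Lemma \ref{fact2} \ref{fact2-1}) gives $\beta\le(2^{|L|}-1)|H|/2^{|L|}$. For the ``moreover'' clause, if $m>|L|$ then $J\neq\emptyset$ by Lemma \ref{f6}; picking a coset $Ha\subseteq J$ (so $Ha=Ha^{-1}$, $Ha\cap A'=\emptyset$, $e\notin Ha$) and using that $S$ is normal, $S\cap Ha$ is a union of $G$-conjugacy classes contained in $Ha$, each of the form $\{y,y^{-1}\}$ with $y\neq y^{-1}$ by Lemma \ref{lem2} and Fact \ref{jb3}; so $\beta=|S\cap Ha|$ is even.

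For sufficiency I would build $S$ explicitly, imitating the proof of Lemma \ref{lem8}. Fix a transversal $I$ of $H$ in $A$ as in Lemma \ref{fact4} (with $e\in I$, $I\setminus J$ inverse-closed, a square representative — by Lemma \ref{fact3} one in $A'$ — in every coset meeting ${\rm Sq}(G)$, and an $A'$-element in every self-paired coset). Choose an inverse-closed $X\subseteq H$ (if $B\leqslant H$) or $X\subseteq H\setminus B$ (if $B\nleqslant H$) with $|X|=\beta$: this is where one uses that $b^2$ is an involution in $H\cap A'$, so $m\ge1$, and that $H$ (respectively $H\setminus B$) has $2^m$ (respectively $2^m-2^{m-r}$) elements fixed under inversion and an even number of remaining elements that pair up, whence a short parity computation yields such an $X$ for every admissible $\beta$. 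Then set $S=\bigl(\bigcup_{x\in X}(I\setminus(J\cup\{e\}))x\bigr)\cup S_J$, where $S_J$ is, over the cosets $Ha'\subseteq J$, a union of $\beta/2$ inversion-pairs $\{y,y^{-1}\}$ with $y\in Ha'\setminus B$ (possible since $Ha'$ has no involution and $|Ha'\setminus B|\ge\beta$ is even). Following Steps 1--3 of the proof of Lemma \ref{lem8} and using Lemma \ref{lem13}, one checks that $S$ is inverse-closed — hence normal by Corollary \ref{cor1} — square-free (when $B\leqslant H$ no non-trivial coset of $H$ meets ${\rm Sq}(G)$; when $B\nleqslant H$, a coset meeting ${\rm Sq}(G)$ receives a representative in $B$, and multiplying it by $x\in X\subseteq H\setminus B$ escapes $B\cup\{b^2\}$), with $S\cap H=\emptyset$ and $|S\cap Ha|=|X|=\beta$ for every $a\in A\setminus H$; Lemma \ref{lem3} \ref{lem3-1} then finishes.

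The hard part will be the sufficiency, specifically the realization of all admissible $\beta$. Unlike in Lemma \ref{lem8}, these $\beta$ need not be multiples of $|H|/2^{|L|}$, so $S$ cannot be assembled from whole conjugacy-type blocks and must instead be chosen coset by coset; this forces the simultaneous handling of (a) the inverse-closed selection $X$ of prescribed size inside $H$ or $H\setminus B$, whose existence rests on $m\ge1$ (forced by $b^2\in H$) and a careful count of involutions, and (b) square-freeness, controlled through the special transversal of Lemma \ref{fact4} and the square involutions of Lemma \ref{fact3}. The self-paired cosets of $H$ in $A$ — governed by $r$ and by whether $m>|L|$ — are the tightest case, because the number of non-square involutions they contain restricts the parity of $\beta$; this is exactly the point at which the clause ``$2\mid\beta$ when $m>|L|$'' must be invoked, and verifying that a valid coset-by-coset inverse-closed selection exists for every remaining $\beta$ is the technical heart of the argument.
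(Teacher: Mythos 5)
Your reduction, necessity argument and parity argument coincide with the paper's: the identity $S\cap\langle H,zb\rangle x=S\cap Hx$ for $x\in A$, the bound $\beta\le|{\rm Nsq}(G)\cap Hx|$ evaluated via Lemma \ref{fact2} \ref{fact2-1}, and the evenness of $|S\cap Ha|$ on cosets $Ha\subseteq J$ (nonempty by Lemma \ref{f6} when $m>|L|$) are exactly the steps taken there; your sufficiency construction (the transversal of Lemma \ref{fact4} translated by an inverse-closed multiplier set $X$, plus inverse pairs inside the $J$-cosets) is the paper's construction in different packaging.

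The gap is the sentence ``a short parity computation yields such an $X$ for every admissible $\beta$''. When $B\nleqslant H$ you need $X\subseteq H\setminus B$ inverse-closed with $|X|=\beta$; the elements of $H\setminus B$ fixed by inversion are exactly the involutions in $(H\cap A')\setminus B$, of which there are $2^m-2^{m-r}$ by Lemma \ref{fact2} \ref{fact2-1}. If $r=0$ this number is $0$, every inverse-closed subset of $H\setminus B$ has even size, and no $X$ of odd cardinality exists. This is not a removable obstruction in your write-up: take $A=\langle a_1\rangle\times\langle a_2\rangle\cong\mathbb{Z}_4\times\mathbb{Z}_4$, $b^2=a_1^2$, $H=\langle a_1\rangle$, so that $m=|L|=1$, $r=0$, $B\nleqslant H$ and the claimed range is $0\le\beta\le2$; the self-paired coset $Ha_2^2$ has exactly two non-square elements, namely $a_1a_2^2$ and its inverse $a_1^3a_2^2$, so any normal square-free $S\subseteq A$ meets $Ha_2^2$ in $0$ or $2$ elements and $\beta=1$ is unattainable. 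You should not feel singled out: the paper's own construction for the case ``$B\nleqslant H$, $m=|L|$, $\beta>2^m-2^{m-r}$'' prescribes $2^m-2^{m-r}-\sigma(\beta)+(-1)^{\beta+1}\delta_{m,r}=-1$ singleton blocks when $r=0$ and $\beta$ is odd, which is meaningless, so the lemma as stated appears to need an additional hypothesis forcing $2\mid\beta$ when $r=0$, not only when $m>|L|$. You correctly identified the self-paired cosets and the count of non-square involutions as the pressure point, but the parity computation you defer does not close, and in the case above cannot.
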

	
	\begin{proof}
		Since $H$ has index $2$ of $\langle H,zb\rangle$ and $S\subseteq A$, one gets
		\begin{align}\label{Aemptyset}
			S\cap \langle H,zb\rangle x=S\cap (H\cup Hzb)x=S\cap Hx
		\end{align}
		for each $x\in A\setminus H$.
		
		We first prove the necessity. Since $|G:A|=2$, one has $G=A\cup Ab$. By Fact \ref{jb2}, we have $|{\rm Nsq}(G)\cap Hab|=|H|$ for each $a\in A$. It follows that $\mathcal{L}(H)=\min\{|{\rm Nsq}(G)\cap Hx|:x\in G\setminus H\}=\min\{|{\rm Nsq}(G)\cap Hx|:x\in A\setminus H\}$. Let $\mathcal{L}(H)=|{\rm Nsq}(G)\cap Hx_0|$ with $x_0\in A\setminus H$. Since $S$ is square-free, one gets $S\cap Hx_0\subseteq {\rm Nsq}(G)\cap Hx_0$. By Lemma \ref{lem3} \ref{lem3-1} and \eqref{Aemptyset}, one obtains $\beta=|S\cap \langle H,zb\rangle x|=|S\cap Hx|$ for all $x\in A\setminus H$. It follows that $\beta=|S\cap Hx_0|\leq |{\rm Nsq}(G)\cap Hx_0|=\mathcal{L}(H)$.
		
		Suppose $B\leqslant H$. Since $b^2\in H$, from Lemma \ref{lem7} \ref{lem7-1}, we have $\beta\leq \mathcal{L}(H)=|H|$ for each $x\in A\setminus H$. Thus, \ref{lem18-1} is valid.
		
		Suppose $B\nleqslant H$. Since $b^2\in H$, from Lemma \ref{lem7} \ref{lem7-2}, one gets $\beta\leq \mathcal{L}(H)=(2^{|L|}-1)|H|/2^{|L|}$ for each $x\in A\setminus H$. Thus, \ref{lem18-2} holds.

		Now suppose $m>|L|$. In view of Lemma \ref{f6}, one has $J\neq\emptyset$. By Lemma \ref{lem3} \ref{lem3-1} and \eqref{Aemptyset}, we get $\beta=|S\cap Ha|$ for $Ha\subseteq J$. For each $ha\in S\cap Ha$ with $Ha\subseteq J$ and $h\in H$, since $S$ is normal and $Ha\cap A'=\emptyset$, we have $h^{-1}a^{-1}\in S\cap Ha^{-1}=S\cap Ha$ from Lemma \ref{lem2}. Since $Ha\cap A'=\emptyset$, from Fact \ref{jb3}, one obtains $2\mid|S\cap Ha|$, and so $2\mid\beta$. The second statement follows.
		
		Next we prove the sufficiency. By Lemma \ref{fact4}, let $I$ be a transversal of $H$ in $A$ containing $e$ such that $I\setminus J$ is inverse-closed and $I$ contains a square element in each coset of $H$ having nonempty intersection with ${\rm Sq}(G)$.
		
		By Lemma \ref{fact2} \ref{fact2-1}, we have $|H\setminus B|=(2^{|L|}-1)|H|/2^{|L|}$, $|H\cap A'|=2^m$ and $|(H\cap A')\setminus B|=2^m-2^{m-r}$. Note that $b^2\in H$. It follows that $\langle b^2\rangle\leqslant H$, and so $|H\cap A'|=2^m\geq2$, which imply $m\geq 1$. By Lemma \ref{lem7} \ref{lem7-1} and \ref{lem7-2}, we have $\beta\leq \mathcal{L}(H)$, which implies that each coset $\langle H,zb\rangle x$ with $x\in A\setminus H$ has at least $\beta$ non-square elements in $A$ by \eqref{Aemptyset}. Let $\sigma(n)=\frac{1-(-1)^{n}}{2}$ for an integer $n$. Now we give an appropriate connection set $S$ for each case respectively.
		\begin{itemize}[leftmargin=*]
			\item Suppose $B\leqslant H$, $m=|L|$ and $\beta\leq 2^m$. Let $S$ be a union of $\beta$ sets of type $(I\setminus\{e\})a'$ with $a'\in H\cap A'$.
			
			\item Suppose $B\leqslant H$, $m=|L|$ and $\beta> 2^m$. Let $S$ be a union of $2^m-\sigma(\beta)$ sets of type $(I\setminus\{e\})a'$ with $a'\in H\cap A'$ and $\frac{\beta-(2^m-\sigma(\beta))}{2}$ sets of type $(I\setminus\{e\})a\cup(I\setminus\{e\})a^{-1}$ with $a\in H\setminus A'$.
			
			\item Suppose $B\nleqslant H$, $m=|L|$ and $\beta\leq 2^m-2^{m-r}$. Let $S$ be a union of $\beta$ sets of type $(I\setminus \{e\})c'$ with $c'\in (H\cap A')\setminus B$.
			
			\item Suppose $B\nleqslant H$, $m=|L|$ and $\beta>2^m-2^{m-r}$. Let $S$ be a union of $2^m-2^{m-r}-\sigma(\beta)+(-1)^{\beta+1}\cdot\delta_{m,r}$ sets of type $(I\setminus \{e\})c'$ with $c'\in (H\cap A')\setminus B$ and $\frac{\beta-(2^m-2^{m-r}-\sigma(\beta)+(-1)^{\beta+1}\cdot\delta_{m,r})}{2}$ sets of type $(I\setminus\{e\})c\cup(I\setminus\{e\})c^{-1}$ with $c\in H\setminus(A'\cup B)$, where $\delta$ is the Kronecker's delta.
			
			\item Suppose $B\leqslant H$, $m>|L|$ and $\beta\leq 2^m$. Let $S$ be a union of $\beta$ sets of type $(I\setminus(J\cup\{e\}))a'$ with $a'\in H\cap A'$ and $\beta/2$ sets of type $\{ha'',h^{-1}a''^{-1}\}$ with $ha''\in {\rm Nsq}(G)$ and $h\in H$ for each $Ha''\subseteq J$.
			
			\item Suppose $B\leqslant H$, $m>|L|$ and $\beta>2^m$. Let $S$ be a union of $2^m$ sets of type $(I\setminus(J\cup\{e\}))a'$ with $a'\in H\cap A'$, $\frac{\beta-2^m}{2}$ sets of type $(I\setminus(J\cup\{e\}))a\cup(I\setminus(J\cup\{e\}))a^{-1}$ with $a\in H\setminus A'$, and $\beta/2$ sets of type $\{ha'',h^{-1}a''^{-1}\}$ with $ha''\in {\rm Nsq}(G)$ and $h\in H$ for each $Ha''\subseteq J$.
			
			\item Suppose $B\nleqslant H$ and $m>|L|$ and $\beta\leq 2^m-2^{m-r}$. Let $S$ be a union of $\beta$ sets of type $(I\setminus(J\cup\{e\}))c'$ with $c'\in (H\cap A')\setminus B$ and $\beta/2$ sets of type $\{ha'',h^{-1}a''^{-1}\}$ with $ha''\in {\rm Nsq}(G)$ and $h\in H$ for each $Ha''\subseteq J$.
			
			\item Suppose $B\nleqslant H$ and $m>|L|$ and $\beta> 2^m-2^{m-r}$. Let $S$ be a union of $2^m-2^{m-r}$ sets of type $(I\setminus(J\cup\{e\}))c'$ with $c'\in (H\cap A')\setminus B$, $\frac{\beta-(2^m-2^{m-r})}{2}$ sets of type $(I\setminus(J\cup\{e\}))c\cup(I\setminus(J\cup\{e\}))c^{-1}$ with $c\in H\setminus(A'\cup B)$, and $\beta/2$ sets of type $\{ha'',h^{-1}a''^{-1}\}$ with $ha''\in {\rm Nsq}(G)$ and $h\in H$ for each $Ha''\subseteq J$.
		\end{itemize}
		
		We now claim that $H$ is a $(0,\beta)$-regular set in ${\rm CayS}(G,S)$. The proof proceeds in the following steps.
		
		\begin{stepp}\label{stepp-1}
			$S$ is normal.
		\end{stepp}
		Suppose $m=|L|$. By Lemma \ref{fact1}, we get $J=\emptyset$, and so $I\setminus\{e\}$ is inverse-closed, which imply that $S$ is normal from Corollary \ref{cor1}.
		
		Suppose $m>|L|$. In view of Lemma \ref{f6}, one obtains $J\neq\emptyset$. Since $I\setminus(J\cup\{e\})$ and $\{ha'',h^{-1}a''^{-1}\}$ with $ha''\in {\rm Nsq}(G)$ and $h\in H$ for each $Ha''\subseteq J$ are inverse-closed, from Corollary \ref{cor1}, $S$ is normal.
		
		\begin{stepp}\label{stepp-2}
			$S$ is square-free.
		\end{stepp}
		If $B\leqslant H$, from Lemma \ref{lem13} \ref{lem13-1}, then $S$ is square-free; if $B\nleqslant H$, from Lemma \ref{lem13} \ref{lem13-4}, then $S$ is square-free.
		
		\begin{stepp}\label{stepp-3}
			$|S\cap \langle H,zb\rangle x|=\beta$ for each $x\in A\setminus H$.
		\end{stepp}
		By \eqref{Aemptyset}, one gets $S\cap \langle H,zb\rangle x=S\cap Hx$ for each $x\in A\setminus H$. We only need to prove $|S\cap Hx|=\beta$ for each $x\in A\setminus H$.
		
		Suppose $m=|L|$. Note that $J=\emptyset$. Since $|(I\setminus\{e\})\cap Hx|=1$ for each $x\in A\setminus H$, we have $|S\cap Hx|=\beta$.
		
		Suppose $m>|L|$. By Lemma \ref{f6}, one has $J\neq\emptyset$. Since $|(I\setminus(J\cup\{e\}))\cap Hd|=1$ for each $Hd$ satisfying $Hd\cap J\cap H=\emptyset$, we have $|S\cap Hd|=\beta$. Since $S$ contains $\beta/2$ sets of type $\{ha'',h^{-1}a''^{-1}\}$ with $ha''\in {\rm Nsq}(G)$ and $h\in H$ for each $Ha''\subseteq J$, we have $|S\cap Ha''|=\beta$.
		
		In view of Steps \ref{step-1}--\ref{step-3} and Lemma \ref{lem3} \ref{lem3-1}, our claim is valid.
	\end{proof}

	\section{Proofs of Theorems of \ref{thm1} and \ref{thm2}}
	
	To give the proofs of Theorems of \ref{thm1} and \ref{thm2}, we need some auxiliary lemmas.

	\begin{lemma}\label{lem20}
		The subgroup $H$ is an $(\alpha,0)$-regular set of $G$ if and only if one of the following occurs:
		\begin{enumerate}
			\item\label{lem20-1} $0\leq\alpha\leq |H|/2-1$ and $\alpha$ is even, when $m=|L|=1$ and $b^2\in H\setminus B$;
			
			\item\label{lem20-2} $0\leq\alpha\leq (2^{|L|}-1)|H|/2^{|L|}-1$, when $m\geq|L|\geq 1$, $m\neq 1$ and $b^2\in H\setminus B$;
			
			\item\label{lem20-3} $0\leq\alpha\leq (2^{|L|}-1)|H|/2^{|L|}$ and $\alpha$ is even, when $r=0$ and $b^2\notin H\setminus B$;
			
			\item\label{lem20-4} $0\leq\alpha\leq (2^{|L|}-1)|H|/2^{|L|}$, when $r>0$ and $b^2\notin H\setminus B$.
		\end{enumerate}
	\end{lemma}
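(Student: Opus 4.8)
The plan is to invoke Lemma \ref{lem3} \ref{lem3-1} to reduce the statement to a counting problem inside $H$: the subgroup $H$ is an $(\alpha,0)$-regular set of $G$ if and only if there is a normal square-free subset $S$ of $G$ with $\alpha=|S\cap H|$ and $|S\cap Hx|=0$ for every $x\in G\setminus H$, that is, with $S\subseteq H$ and $|S|=\alpha$. Thus we must describe the possible cardinalities of normal square-free subsets of $A$ lying inside $H$. By Fact \ref{jb2} such a set is contained in ${\rm Nsq}(G)\cap H=H\setminus((H\cap B)\cup\{b^2\})$, and by Lemma \ref{lem2} together with Corollary \ref{cor1} a subset of $A$ is normal precisely when it is a disjoint union of singletons $\{a\}$ with $a\in A'$ and $2$-element classes $\{a,a^{-1}\}$ with $a\in A\setminus A'$. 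Hence ${\rm Nsq}(G)\cap H$ itself splits as the disjoint union of the singletons in $(H\cap A')\setminus(B\cup\{b^2\})$ (which by Fact \ref{jb3} correspond to non-square involutions of $G$) and the $2$-element classes lying in $H\setminus(A'\cup B)$, and an admissible $S$ is obtained by choosing some singletons together with some complete $2$-element classes.

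For the necessity I would split according to whether $b^2\in H\setminus B$. If $b^2\in H\setminus B$, then $|H\cap B|=|H|/2^{|L|}$ by Lemma \ref{fact2} \ref{fact2-1} and $|L|\ge 1$, since $|L|=0$ would give $|H\cap B|=|H|$ and hence $H\leqslant B$; so $\alpha\le|{\rm Nsq}(G)\cap H|=(2^{|L|}-1)|H|/2^{|L|}-1$ by Lemma \ref{lem3} \ref{lem3-2}. If moreover $m=1$, then $m=|L|=1$, so $|H\cap A'|=2^m=2$ and $H\cap A'=\langle b^2\rangle\subseteq{\rm Sq}(G)$; then $S$ contains no involution, so it is a disjoint union of $2$-element classes and $\alpha$ is even, which is \ref{lem20-1}; if $m\ne1$ we just record the bound, which is \ref{lem20-2}. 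If $b^2\notin H\setminus B$, then $|{\rm Nsq}(G)\cap H|=|H\setminus(H\cap B)|=(2^{|L|}-1)|H|/2^{|L|}$; when $r=0$ Lemma \ref{fact2} \ref{fact2-1} gives $|H\cap A'|=|H\cap A'\cap B|$, hence $H\cap A'\subseteq B\subseteq{\rm Sq}(G)$, so again $S$ has no involution and $\alpha$ is even, which is \ref{lem20-3}; otherwise we get \ref{lem20-4}.

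For the sufficiency I would exhibit an $S$ realizing each admissible $\alpha$. By Lemma \ref{fact2} \ref{fact2-1} the set of available singletons $(H\cap A')\setminus(B\cup\{b^2\})$ has size $2^m-2^{m-r}-1$ when $b^2\in H\setminus B$ and $2^m-2^{m-r}$ when $b^2\notin H\setminus B$. If $\alpha$ does not exceed that number, let $S$ be a union of $\alpha$ singletons; otherwise take as many singletons as possible whose total number has the parity of $\alpha$ (an adjustment by at most one, governed by $\sigma(n)=(1-(-1)^n)/2$ and, in the exceptional case $m=r$, by a Kronecker delta $\delta_{m,r}$), and complete $S$ with the appropriate number of $2$-element classes from $H\setminus(A'\cup B)$. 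In the parity-constrained cases \ref{lem20-1} and \ref{lem20-3} the set of available singletons is empty, so $S$ consists only of $2$-element classes and $\alpha$ is automatically even. By Fact \ref{jb2} and Lemma \ref{lem2} the resulting $S$ is normal and square-free, and $|S\cap H|=|S|=\alpha$, so Lemma \ref{lem3} \ref{lem3-1} completes the proof.

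The main obstacle is the bookkeeping in the sufficiency: one must verify in every regime -- small $\alpha$, large $\alpha$ of matching parity, large $\alpha$ of opposite parity, and the exceptional case $m=r$ -- that there are genuinely enough singletons and $2$-element classes with total size exactly $\alpha$, and that the parity correction never forces a negative count (here it is useful that $r\ge 1$ automatically whenever $b^2\in H\setminus B$, and that $2^m-2^{m-r}-1>0$ once $b^2\in H\setminus B$ and $m\ne 1$). The necessity is comparatively routine once one isolates the dichotomy of whether $H$ contains an involution lying outside ${\rm Sq}(G)=B\cup\{b^2\}$.
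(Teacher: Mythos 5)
Your proposal is correct and follows essentially the same route as the paper: reduce via Lemma \ref{lem3}\ref{lem3-1} to counting normal square-free subsets of $H$, decompose $\mathrm{Nsq}(G)\cap H$ into singleton classes from $(H\cap A')\setminus(B\cup\{b^2\})$ and two-element classes from $H\setminus(A'\cup B)$ using Fact \ref{jb2}, Fact \ref{jb3} and Lemma \ref{lem2}, split on whether $b^2\in H\setminus B$, and realize each admissible $\alpha$ with the same $\sigma$/$\delta_{m,r}$ parity bookkeeping. Your observations that $r\geq 1$ whenever $b^2\in H\setminus B$ and that the singleton pool is empty exactly in the parity-constrained cases \ref{lem20-1} and \ref{lem20-3} are accurate and match the paper's implicit reasoning.
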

	\begin{proof}
		Let $H$ be an $(\alpha,0)$-regular set in ${\rm CayS}(G,S)$ for some subset $S$ of $G$. By Lemma \ref{lem3} \ref{lem3-1}, one obtains $S=S\cap H$. Since $b^2\in A'$ from Fact \ref{jb2}, we get
		\begin{align}\label{Salpha0}
			S=S\cap H &\subseteq {\rm Nsq}(G)\cap H\nonumber\\
			&=({\rm Nsq}(G)\cap (H\cap A'))\cup ({\rm Nsq}(G)\cap (H\setminus A'))\nonumber\\
			&=((H\cap A')\setminus (B\cup\{b^2\}))\cup (H\setminus (A'\cup B)).
		\end{align}
		
		We divide the proof into the following two cases.
		
		\textbf{Case 1.} $b^2\in H\setminus B$.
		
		By Fact \ref{jb2}, one gets ${\rm Nsq}(G)\cap H=H\setminus((H\cap B)\cup\{b^2\})$. If $|L|=0$, from Lemma \ref{fact2} \ref{fact2-1}, then $|H\cap B|=|H|$, and so $H\leqslant B$, contrary to $b^2\in H\setminus B$. Then $|L|\geq 1$. In view of Lemma \ref{lem3} \ref{lem3-2} and Lemma \ref{fact2} \ref{fact2-1}, we have $0\leq\alpha\leq |H\setminus((H\cap B)\cup\{b^2\})|=(2^{|L|}-1)|H|/2^{|L|}-1$.
		
		Suppose $m=1$. It follows that $m=|L|=1$. By Lemma \ref{fact2} \ref{fact2-1}, we have $|H\cap A'|=2$. Since $\langle b^2\rangle\leqslant H\cap A'$, we get $H\cap A'=\langle b^2\rangle$, and so $H\cap A'\subseteq {\rm Sq}(G)$ from Fact \ref{jb2}. Since $S$ is square-free and $S=S\cap H$, we have $S\subseteq {\rm Nsq}(G)\cap H=H\setminus{\rm Sq}(G)$. Since $H\cap A'\subseteq {\rm Sq}(G)$, from Fact \ref{jb3}, $S$ has no involution. Since $S$ is normal, from Lemma \ref{lem2}, $S$ is a union of $\alpha/2$ sets of type $\{a,a^{-1}\}$ with $a\in H\setminus((H\cap B)\cup\{b^2\})$. It follows that $\alpha$ is even. Thus, \ref{lem20-1} holds.
		
		Suppose $m\neq 1$. Then $m\geq|L|\geq 1$ and $m\neq 1$. By Lemma \ref{fact2} \ref{fact2-1}, we have $|(H\cap A')\setminus B|=2^m-2^{m-r}$. Since $b^2\in H\setminus B$ and $b^2\in A'$, from Fact \ref{jb3}, one gets $|(H\cap A')\setminus (B\cup\{b^2\})|=2^m-2^{m-r}-1$, which implies that $S$ has at most $2^m-2^{m-r}-1$ involutions from \eqref{Salpha0}. If $\alpha\leq 2^m-2^{m-r}-1$, from Lemma \ref{lem2} and \eqref{Salpha0}, then $S$ can be a union of $\alpha$ sets of type $\{a'\}$ with $a'\in (H\cap A')\setminus (B\cup\{b^2\})$ since $S$ is normal.
		
		We only need to consider the case $\alpha>2^m-2^{m-r}-1$. Note that $S$ is normal. If $\alpha$ is even, from Lemma \ref{lem2} and \eqref{Salpha0}, then $S$ can be a union of $2^m-2^{m-r}-2+\delta_{m,r}$ sets of type $\{a'\}$ with $a'\in (H\cap A')\setminus (B\cup\{b^2\})$ and $\frac{\alpha-(2^m-2^{m-r}-2+\delta_{m,r})}{2}$ sets of type $\{a,a^{-1}\}$ with $a\in H\setminus (A'\cup B)$. If $\alpha$ is odd, from Lemma \ref{lem2} and \eqref{Salpha0} again, then $S$ can be a union of $2^m-2^{m-r}-1-\delta_{m,r}$ sets of type $\{a'\}$ with $a'\in (H\cap A')\setminus (B\cup\{b^2\})$ and $\frac{\alpha-(2^m-2^{m-r}-1-\delta_{m,r})}{2}$ sets of type $\{a,a^{-1}\}$ with $a\in H\setminus (A'\cup B)$. Thus, \ref{lem20-2} is valid.
		
		\textbf{Case 2.} $b^2\notin H\setminus B$.
		
		Since $b^2\notin H\setminus B$, from Fact \ref{jb2}, we have ${\rm Nsq}(G)\cap H=H\setminus(H\cap B)$. It follows from Lemma \ref{lem3} \ref{lem3-2} and Lemma \ref{fact2} \ref{fact2-1} that $0\leq\alpha\leq |H\setminus(H\cap B)|=(2^{|L|}-1)|H|/2^{|L|}$.
		
		Suppose $r=0$. By Lemma \ref{fact2} \ref{fact2-1}, one has $|H\cap A'|=|H\cap A'\cap B|=2^m$, and so $H\cap A'\subseteq B$. Since $S$ is square-free and $S=S\cap H$, we have $S\subseteq {\rm Nsq}(G)\cap H=H\setminus(H\cap B)$. Since $H\cap A'\subseteq B$, from Fact \ref{jb3}, $S$ has no involution. Since $S$ is normal, from Lemma \ref{lem2}, $S$ is a union of $\alpha/2$ sets of type $\{a,a^{-1}\}$ with $a\in H\setminus (H\cap B)$. It follows that $\alpha$ is even. Thus, \ref{lem20-3} holds.
		
		Suppose $r>0$. Since $b^2\notin H\setminus B$, we get $(H\cap A')\setminus (B\cup\{b^2\})=(H\cap A')\setminus B$. By Lemma \ref{fact2} \ref{fact2-1}, we get $|(H\cap A')\setminus B|=2^m-2^{m-r}$. By \eqref{Salpha0} and Fact \ref{jb3}, $S$ has at most $2^m-2^{m-r}$ involutions. If $\alpha \leq 2^m-2^{m-r}$, from Lemma \ref{lem2} and \eqref{Salpha0}, then $S$ can be a union of $\alpha$ sets of type $\{a'\}$ with $a'\in (H\cap A')\setminus B$ since $S$ is normal.
		
		We only need to consider the case $\alpha>2^m-2^{m-r}$. Note that $S$ is normal. If $\alpha$ is even, from Lemma \ref{lem2} and \eqref{Salpha0}, then $S$ can be a union of $2^m-2^{m-r}-\delta_{m,r}$ sets of type $\{a'\}$ with $a'\in (H\cap A')\setminus B$ and $\frac{\alpha-(2^m-2^{m-r}-\delta_{m,r})}{2}$ sets of type $\{a,a^{-1}\}$ with $a\in H\setminus (A'\cup B)$. If $\alpha$ is odd, from Lemma \ref{lem2} and \eqref{Salpha0} again, then $S$ can be a union of $2^m-2^{m-r}-1+\delta_{m,r}$ sets of type $\{a'\}$ with $a'\in (H\cap A')\setminus B$ and $\frac{\alpha-(2^m-2^{m-r}-1+\delta_{m,r})}{2}$ sets of type $\{a,a^{-1}\}$ with $a\in H\setminus (A'\cup B)$. Thus, \ref{lem20-4} is valid.
	\end{proof}

	\begin{lemma}\label{lem16} The subgroup $\langle H,zb\rangle$ is an $(\alpha,\beta)$-regular set of $G$ if and only if there exists a normal and square-free subset $S$ of $G$ such that $\langle H,zb\rangle$ is an $(\eta,\zeta)$-regular set in ${\rm CayS}(G,S\cap A)$ and an $(\alpha-\eta,\beta-\zeta)$-regular set in ${\rm CayS}(G,S\cap Ab)$ for some $\eta\in\{0,1,\ldots,\alpha\}$ and $\zeta\in\{0,1,\ldots,\beta\}$.
	\end{lemma}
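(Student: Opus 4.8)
The plan is to prove the two implications separately; the ``only if'' direction is the substantial one, and the rest is formal.

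\emph{The ``if'' direction.} Suppose $S$ is a normal square-free subset of $G$ for which $K:=\langle H,zb\rangle$ is an $(\eta,\zeta)$-regular set in ${\rm CayS}(G,S\cap A)$ and an $(\alpha-\eta,\beta-\zeta)$-regular set in ${\rm CayS}(G,S\cap Ab)$. A normal subset of $G$ is a union of conjugacy classes, and by Lemma \ref{lem2} every conjugacy class of $G$ lies entirely in $A$ or entirely in $Ab$; hence $S\cap A$ and $S\cap Ab$ are again normal, and trivially square-free, with $S=(S\cap A)\sqcup(S\cap Ab)$. For any $x\in G$ the neighbours of $x$ lying in $K$ in ${\rm CayS}(G,S)$ split according to whether the relevant product lies in $S\cap A$ or in $S\cap Ab$, so the number of such neighbours is the corresponding number in ${\rm CayS}(G,S\cap A)$ plus that in ${\rm CayS}(G,S\cap Ab)$; this equals $\eta+(\alpha-\eta)=\alpha$ for $x\in K$ and $\zeta+(\beta-\zeta)=\beta$ for $x\in G\setminus K$. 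Thus $K$ is an $(\alpha,\beta)$-regular set in ${\rm CayS}(G,S)$, hence an $(\alpha,\beta)$-regular set of $G$. (Equivalently, apply Lemma \ref{lem3} \ref{lem3-1} to each of $S\cap A$ and $S\cap Ab$ and add the resulting equalities $|S\cap A\cap K|+|S\cap Ab\cap K|=\alpha$ and $|S\cap A\cap Kx|+|S\cap Ab\cap Kx|=\beta$.)

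\emph{The ``only if'' direction.} Fix a normal square-free $S_0$ with $K$ an $(\alpha,\beta)$-regular set in ${\rm CayS}(G,S_0)$ and decompose $S_0=(S_0\cap A)\sqcup(S_0\cap Ab)$ into normal square-free pieces as above. For $x\in K$ we have $xK=K$, so the number of neighbours of $x$ in $K$ in ${\rm CayS}(G,S_0\cap A)$ (resp. ${\rm CayS}(G,S_0\cap Ab)$) is $|(S_0\cap A)\cap K|$ (resp. $|(S_0\cap Ab)\cap K|$); these are independent of $x$ and sum to $|S_0\cap K|=\alpha$. Since $b^2\in H$, for every $x\in G$ the set $xK\cap A$ is a single coset of $H$ in $A$, and as $x$ runs over $G\setminus K$ this coset runs over all nontrivial cosets of $H$ in $A$ (with the analogous statement for $Ab$); hence the number of neighbours of $x\in G\setminus K$ in $K$ in ${\rm CayS}(G,S_0\cap A)$ is $|S_0\cap(xK\cap A)|$, which \emph{need not} be constant once $[G:K]=[A:H]>2$. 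So, in contrast with the normal case treated in Lemma \ref{lem4}, it is not enough to restrict $S_0$: one must produce a rebalanced connection set $S$, still satisfying $|S\cap K|=\alpha$ and $|S\cap Kx|=\beta$ for $x\notin K$, but with $|S\cap C|$ equal over all nontrivial cosets $C\leqslant A$ and $|S\cap Cb|$ equal over all cosets $Cb\neq Hzb$; setting $\eta:=|S\cap H|$ and $\zeta:=|S\cap C|$ then gives the required decomposition.

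To build such an $S$, I would exploit the class structure of Lemma \ref{lem2}: a class inside $A$ is $\{a\}$ for $a\in A'$ or $\{a,a^{-1}\}$ otherwise, which forces $|S_0\cap C|=|S_0\cap C^{-1}|$ for every coset $C$ of $H$ in $A$; a class inside $Ab$ equals $Bab$ for some $a$, and $Bab$ meets precisely the $H$-cosets of $Ab$ forming the coset of $HB/H$ through $Hab$, each in $|B\cap H|$ elements, so $|S_0\cap Cb|$ is already constant on each such $HB/H$-block. Combining these constraints with the identities $|S_0\cap K|=\alpha$ and $|S_0\cap Kx|=\beta$, I would redistribute the conjugacy classes of $S_0$ within each inversion-pair of cosets of $A$ and within each block of cosets of $Ab$, replacing classes where needed by others with the same coset-profile, using the explicit constructions already developed for connection sets contained in $A$ or in $Ab$ (Lemmas \ref{lem8}, \ref{lem9}, \ref{lem18}), so as to equalise the counts as above while preserving the totals on $K$ and on each coset of $K$. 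The main obstacle is exactly this rebalancing: one has to verify that the common target values $\zeta$ and $\beta-\zeta$ are attainable by genuine unions of square-free conjugacy classes — not merely as rational averages — which is precisely where the combinatorics of $A'$, $B$, $A'$ and the cosets of $H$ (and the quantities $|L|$, $m$, $r$) enters.
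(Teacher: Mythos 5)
Your ``if'' direction is correct and coincides with the paper's sufficiency argument: by Lemma \ref{lem2} every conjugacy class of $G$ lies wholly in $A$ or wholly in $Ab$, so $S\cap A$ and $S\cap Ab$ are again normal and square-free, and adding the two counts supplied by Lemma \ref{lem3} \ref{lem3-1} gives $|S\cap \langle H,zb\rangle|=\eta+(\alpha-\eta)=\alpha$ and $|S\cap \langle H,zb\rangle x|=\zeta+(\beta-\zeta)=\beta$.

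The ``only if'' direction, however, is not proved. You correctly isolate the obstruction --- for the witnessing set $S_0$ the quantity $|S_0\cap A\cap \langle H,zb\rangle x|$ need not be constant over the nontrivial cosets, so one cannot simply take $S=S_0$ --- but everything after that is a plan rather than an argument: you never exhibit the rebalanced connection set $S$, never fix the values of $\eta$ and $\zeta$, and you explicitly flag the decisive verification (that the common target values $\zeta$ and $\beta-\zeta$ are realizable by genuine unions of square-free conjugacy classes, not merely as averages) as an open obstacle. Since the entire content of the lemma beyond the formal ``if'' direction is precisely this existence claim, the proposal establishes only half of the statement. For comparison, the paper's necessity proof writes $S$ as the union of $S\cap\langle H,zb\rangle$ with $\beta$ pairwise disjoint subsets $T_j$ of ${\rm Nsq}(G)$, each $T_j\cup\{e\}$ a right transversal of $\langle H,zb\rangle$ in $G$, and then asserts that without loss of generality $\zeta$ of the $T_j$ lie in $A$ and the rest in $Ab$, setting $\eta=|S\cap H|$; that assertion is exactly the constancy of $|S\cap A\cap \langle H,zb\rangle x|$ whose failure you point out, so the difficulty you identify is real and sits at the same place in the paper's own write-up. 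Either way, the step you defer is the lemma's main claim, and without it your argument has a hole there.
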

	\begin{proof}
		We first prove the necessity. Suppose that $\langle H,zb\rangle$ is an $(\alpha,\beta)$-regular set in ${\rm CayS}(G,S)$.  By Lemma \ref{lem3} \ref{lem3-1}, one gets $|S\cap \langle H,zb\rangle|=\alpha$ and $|S\cap \langle H,zb\rangle x|=\beta$ for each $x\in A\setminus H$, which imply that $S$ is the union of $S\cap \langle H,zb\rangle$, and $\beta$ pairwise disjoint subsets $T_j$ of ${\rm Nsq}(G)$, $1\leq j\leq\beta$, such that for each $j$, $T_{j}\cup\{e\}$ is a right transversal of $\langle H,zb\rangle$ in $G$. Without loss of generality, we may assume that $|S\cap H|=\eta$, $\cup_{h=1}^{\zeta}T_h\subseteq A$ and $\cup_{h=\zeta+1}^{\beta}T_h\subseteq Ab$ for some $\eta\in\{0,1,\ldots,\alpha\}$ and $\zeta\in\{0,1,\ldots,\beta\}$. Since $H$ has index $2$ of $\langle H,zb\rangle$, we have $|S\cap Hzb|=\alpha-\eta$. Then $|S\cap A\cap \langle H,zb\rangle|=\eta$, $|S\cap A\cap \langle H,zb \rangle x|=\zeta$ for each $x\in A\setminus H$ and $|S\cap Ab\cap \langle H,zb\rangle|=\alpha-\eta$,  $|S\cap Ab\cap \langle H,zb \rangle x|=\beta-\zeta$ for each $x\in A\setminus H$. Note that $S=(S\cap A)\cup(S\cap Ab)$. Since $S$ is a normal and square-free subset of $G$, $S\cap A$ and $S\cap Ab$ are both normal and square-free. By Lemma \ref{lem3} \ref{lem3-1}, $\langle H,zb\rangle$ is an $(\eta,\zeta)$-regular set in ${\rm CayS}(G,S\cap A)$ and an $(\alpha-\eta,\beta-\zeta)$-regular set in ${\rm CayS}(G,S\cap Ab)$.
		
		Next we prove the sufficiency. Suppose that $\langle H,zb\rangle$ is an $(\eta,\zeta)$-regular set in ${\rm CayS}(G,S\cap A)$ and an $(\alpha-\eta,\beta-\zeta)$-regular set in ${\rm CayS}(G,S\cap Ab)$. By Lemma \ref{lem3} \ref{lem3-1}, $S\cap A$ contains exactly $\eta$ elements of $\langle H,zb\rangle$ and $\zeta$ elements of $\langle H,zb\rangle x$ for each $x\in A\setminus H$, and $S\cap Ab$ contains exactly $\alpha-\eta$ elements of $\langle H,zb\rangle$ and $\beta-\zeta$ elements of $\langle H,zb\rangle x$ for each $x\in A\setminus H$.
		Since $G=A\cup Ab$, one gets $S=(S\cap A)\cup (S\cap Ab)$, which implies that $S$ contains exactly $\alpha$ elements of $\langle H,zb\rangle$ and $\beta$ elements of $\langle H,zb\rangle x$ for each $x\in A\setminus H$. Since $S\cap A$ and $S\cap Ab$ are both normal and square-free, from Lemma \ref{lem3} \ref{lem3-1}, $\langle H,zb\rangle$ is an $(\alpha,\beta)$-regular set in ${\rm CayS}(G,S)$.
	\end{proof}
	
	\begin{lemma}\label{lem5}
		Let $S$ be a normal and square-free subset of $G$ with $S\subseteq A$. Then $\langle H,zb\rangle$ is an $(\alpha,\beta)$-regular set in ${\rm CayS}(G,S)$ if and only if $H$ is an $(\alpha,0)$-regular set in ${\rm CayS}(G,S\cap H)$ and $\langle H,zb\rangle$ is a $(0,\beta)$-regular set of ${\rm CayS}(G,S\setminus H)$.
	\end{lemma}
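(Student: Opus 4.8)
The plan is to deduce the statement directly from Lemma~\ref{lem3}~\ref{lem3-1} by splitting the connection set $S$ along the subgroup $H$ and by matching right cosets of $\langle H,zb\rangle$ in $G$ with right cosets of $H$ in $A$; this is the natural refinement of Lemma~\ref{lem16} in the special case $S\subseteq A$.

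First I would collect the structural facts. Since $(zb)^2=b^2$ and $|\langle H,zb\rangle:H|=2$ we have $b^2\in H$, so $\langle H,zb\rangle=H\sqcup Hzb$ with $Hzb\subseteq Ab$; as $|G:A|=2$ this gives $Hzb\cap A=\emptyset$, whence $S\cap\langle H,zb\rangle=S\cap H$ because $S\subseteq A$. Thus $S=(S\cap H)\sqcup(S\setminus H)$ with $S\setminus H=S\cap(A\setminus H)\subseteq A$. As $H$ is normal in $G$ and $S$ is normal, both $S\cap H$ and $S\setminus H$ are normal, and being subsets of the square-free set $S$ they are square-free; hence ${\rm CayS}(G,S\cap H)$ and ${\rm CayS}(G,S\setminus H)$ are well defined and Lemma~\ref{lem3}~\ref{lem3-1} is applicable to all three graphs.

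The next step is the coset correspondence. For $x\in A$ one has $bx=x^{-1}b$, so $\langle H,zb\rangle x=Hx\cup Hzx^{-1}b$, and since $\langle H,zb\rangle\cap A=H$ the assignment $Hx\mapsto\langle H,zb\rangle x$ is a bijection from the right cosets of $H$ in $A$ onto the right cosets of $\langle H,zb\rangle$ in $G$, carrying $H$ to $\langle H,zb\rangle$, with inverse ``intersect with $A$''. Because $S\subseteq A$, this gives $|S\cap\langle H,zb\rangle y|=|S\cap Hx|$ for the $H$-coset $Hx=\langle H,zb\rangle y\cap A$, and for $x\notin H$ also $S\cap Hx=(S\setminus H)\cap Hx$, while $(S\cap H)\cap Hx=\emptyset$; moreover $(S\setminus H)\cap\langle H,zb\rangle=(S\setminus H)\cap H=\emptyset$. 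Consequently, as $y$ runs over $G\setminus\langle H,zb\rangle$ the values $|S\cap\langle H,zb\rangle y|$ are exactly the values $|(S\setminus H)\cap Hx|$ for $x\in A\setminus H$.

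Finally I would assemble the equivalence. By Lemma~\ref{lem3}~\ref{lem3-1}, $\langle H,zb\rangle$ is an $(\alpha,\beta)$-regular set in ${\rm CayS}(G,S)$ iff $\alpha=|S\cap H|$ and $|S\cap Hx|=\beta$ for every $x\in A\setminus H$. By Lemma~\ref{lem3}~\ref{lem3-1} together with $(S\cap H)\cap Hx=\emptyset$ for $x\notin H$, the first equality is equivalent to $H$ being an $(\alpha,0)$-regular set in ${\rm CayS}(G,S\cap H)$; by Lemma~\ref{lem3}~\ref{lem3-1}, $(S\setminus H)\cap\langle H,zb\rangle=\emptyset$ and the coset correspondence, the second condition is equivalent to $\langle H,zb\rangle$ being a $(0,\beta)$-regular set in ${\rm CayS}(G,S\setminus H)$. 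Both implications then follow at once. I expect the only delicate point to be the bookkeeping of the third step: one must check carefully that $\langle H,zb\rangle y\cap A$ is a single right coset of $H$ and that the correspondence is a genuine bijection, so that the quantifier ``for all $x\in G\setminus\langle H,zb\rangle$'' of Lemma~\ref{lem3}~\ref{lem3-1} translates without loss to ``for all $x\in A\setminus H$''.
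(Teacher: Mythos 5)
Your proposal is correct and follows essentially the same route as the paper: decompose $S=(S\cap H)\sqcup(S\setminus H)$, note that $S\subseteq A$ forces $S\cap\langle H,zb\rangle=S\cap H$ and $(S\setminus H)\cap\langle H,zb\rangle=\emptyset$, and translate all three regularity conditions into the counting criterion of Lemma~\ref{lem3}~\ref{lem3-1}. Your explicit verification that $\langle H,zb\rangle y\cap A$ is a single right coset of $H$ and that $Hx\mapsto\langle H,zb\rangle x$ is a bijection on cosets is a useful elaboration of a step the paper leaves implicit, but it does not change the argument.
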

	\begin{proof}
		Since $S\subseteq A$ and $H$ has index $2$ of $\langle H,zb\rangle$, we have $S\cap \langle H,zb\rangle=S\cap H$ and $S\cap (G\setminus \langle H,zb\rangle)=S\cap (A\setminus H)$.
		
		We first prove the necessity. Since $S$ is normal, square-free and $H$ is a normal subgroup of $G$, $S\cap H$ is normal and square-free, which implies that $S\setminus H$ are normal and square-free. Since $\langle H,zb\rangle$ is an $(\alpha,\beta)$-regular set in ${\rm CayS}(G,S)$, from Lemma \ref{lem3} \ref{lem3-1}, one has $|S\cap H|=|S\cap \langle H,zb\rangle|=\alpha$ and $|(S\setminus H)\cap \langle H,zb\rangle x|=|S\cap \langle H,zb\rangle x|=\beta$ for each $x\in A\setminus H$. Since $S\cap Hb=\emptyset$, from Lemma \ref{lem3} \ref{lem3-1} again, $H$ is an $(\alpha,0)$-regular set in ${\rm CayS}(G,S\cap H)$ and $\langle H,zb\rangle$ is a $(0,\beta)$-regular set in ${\rm CayS}(G,S\setminus H)$.
		
		We next prove the sufficiency. Since $H$ is an $(\alpha,0)$-regular set in ${\rm CayS}(G,S\cap H)$ and $\langle H,zb\rangle$ is a $(0,\beta)$-regular set in ${\rm CayS}(G,S\setminus H)$, from Lemma \ref{lem3} \ref{lem3-1}, one gets $|S\cap \langle H,zb\rangle|=|S\cap H|=\alpha$ and $|S\cap \langle H,zb\rangle x|=|(S\setminus H)\cap \langle H,zb\rangle x|=\beta$ for each $x\in A\setminus H$. Since $S=(S\cap H)\cup (S\setminus H)$, from Lemma \ref{lem3} \ref{lem3-1}, $\langle H,zb\rangle$ is an $(\alpha,\beta)$-regular set in ${\rm CayS}(G,S)$.
	\end{proof}
	
	Now we are ready to give a proof of Theorem \ref{thm1}.
	
	\begin{proof}[Proof of Theorem \ref{thm1}] By Lemma \ref{lem4}, $H$ is an $(\alpha,\beta)$-regular set of $G$ if and only if $H$ is an $(\alpha,0)$-regular set of $G$ and a $(0,\beta)$-regular set of $G$. The desired result is valid from Lemmas \ref{lem8} and \ref{lem20}.
	\end{proof}
	
	Next, we give a proof of Theorem \ref{thm2}.
	
	\begin{proof}[Proof of Theorem \ref{thm2}] 
		
		By Lemma \ref{lem16}, $\langle H,zb\rangle$ is an $(\alpha,\beta)$-regular set of $G$ if and only if $\langle H,zb\rangle$ is an $(\eta,\zeta)$-regular set in ${\rm CayS}(G,S\cap A)$ and an $(\alpha-\eta,\beta-\zeta)$-regular set in ${\rm CayS}(G,S\cap Ab)$ for some $\eta\in\{0,1,\ldots,\alpha\}$, $\zeta\in\{0,1,\ldots,\beta\}$, and normal, square-free subset $S$. In view of Lemma \ref{lem5}, $\langle H,zb\rangle$ is an $(\alpha,\beta)$-regular set of $G$ if and only if $H$ is an $(\eta,0)$-regular set in ${\rm CayS}(G,S\cap H)$, $\langle H,zb\rangle$ is an $(0,\zeta)$-regular set in ${\rm CayS}(G,(S\cap A)\setminus H)$ and an $(\alpha-\eta,\beta-\zeta)$-regular set in ${\rm CayS}(G,S\cap Ab)$ for some $\eta\in\{0,1,\ldots,\alpha\}$, $\zeta\in\{0,1,\ldots,\beta\}$, and normal, square-free subset $S$. The desired result follows from Lemma \ref{fact2} \ref{fact2-2} and Lemmas \ref{lem9}, \ref{lem18} and \ref{lem20}.
	\end{proof}
	
	
\section*{Data Availability Statement}
	
No data was used for the research described in the article.

\section*{Conflict of Interest}
	
The authors declare that they have no conflict of interest.

	\section*{Acknowledgements}
	
The authors would like to thank Dr. Wenying Zhu for her help and valuable suggestions Y.~Yang is supported by NSFC (12101575, 52377162).

\end{document}